\documentclass[11pt]{article}

\usepackage[margin=1in]{geometry}
\usepackage{amsthm,amsmath,amsfonts,amssymb,mathtools}
\usepackage{aliascnt}
\usepackage[numbers]{natbib}
\usepackage[colorlinks,citecolor=blue,urlcolor=blue]{hyperref}
\usepackage[nameinlink,capitalize]{cleveref}
\usepackage{enumitem}
\usepackage{microtype}

\newcommand{\BetaIncidenceAuthorAffiliation}{Department of Statistics and
  Operations Research, University of North Carolina at Chapel Hill}

\newcommand{\BetaIncidenceAuthorEmail}{tianletl@unc.edu}

\newtheoremstyle{thmstyleone}%
  {6pt}{6pt}{\itshape}{}{}{.}{0.5em}{}
\newtheoremstyle{thmstyletwo}%
  {6pt}{6pt}{\normalfont}{}{}{.}{0.5em}{}

\numberwithin{equation}{section}
\theoremstyle{thmstyleone}%
\newtheorem{theorem}{Theorem}[section]
\newaliascnt{proposition}{theorem}
\newtheorem{proposition}[proposition]{Proposition}
\aliascntresetthe{proposition}
\newaliascnt{lemma}{theorem}
\newtheorem{lemma}[lemma]{Lemma}
\aliascntresetthe{lemma}
\newaliascnt{definition}{theorem}

\aliascntresetthe{definition}
\newaliascnt{corollary}{theorem}
\newtheorem{corollary}[corollary]{Corollary}
\aliascntresetthe{corollary}
\newaliascnt{conjecture}{theorem}

\aliascntresetthe{conjecture}
\theoremstyle{thmstyletwo}%
\newaliascnt{example}{theorem}
\newtheorem{example}[example]{Example}
\aliascntresetthe{example}
\newtheorem*{remark}{Remark}%

\makeatletter
\@ifundefined{acks}{%
  \newenvironment{acks}[1][Acknowledgments]{\section*{#1}}{\par}
}{}
\makeatother

\newcommand{\R}{\mathbb R}

\newcommand{\D}{\mathbb D}
\newcommand{\E}{\mathbb E}
\newcommand{\Prb}{\mathbb P}
\newcommand{\sphere}{\mathbb S}

\newcommand{\dd}{\,\mathrm d}
\newcommand{\Beta}{\operatorname{Beta}}
\newcommand{\BetaPrime}{\operatorname{BetaPrime}}
\newcommand{\Dir}{\operatorname{Dirichlet}}
\newcommand{\Cauchy}{\operatorname{Cauchy}}

\newcommand{\Unif}{\operatorname{Unif}}
\newcommand{\dist}{\operatorname{dist}}
\newcommand{\Law}{\mathcal L}
 \newcommand{\BetaIncidenceBibliographyStyle}{alpha}

\hypersetup{
  pdftitle={Universal Beta Incidence Angles: Cauchy Rigidity and Infinite Arrangements},
  pdfauthor={Tianle Liu}
}

\title{Universal Beta Incidence Angles: Cauchy Rigidity and\\
Infinite Arrangements}
\author{Tianle Liu\\
  \small\BetaIncidenceAuthorAffiliation\\
  \small\href{mailto:\BetaIncidenceAuthorEmail}
    {\texttt{\BetaIncidenceAuthorEmail}}}
\date{August 2026}

\begin{document}

\maketitle

\begin{abstract}
Let \(U\) be Haar-uniform on the unit sphere, and let a weighted central
hyperplane arrangement define
\[
  g(U)=\sum_j w_j\frac{a_j}{a_j^\top U},
  \qquad
  N(U)=\frac{g(U)}{\|g(U)\|}.
\]
Although \(N(U)\) is a deterministic, generally non-Haar function of \(U\),
we prove that its squared incidence cosine with \(U\) has the same beta law
as the squared inner product of two independent Haar directions.  One proof
combines the classical Herglotz--Cauchy boundary principle, a Haar-random
two-plane with a common phase, and an exact beta--Cauchy Mellin equivalence;
a second specializes the positive-semidefinite Pillai--Meng identity.

Beyond this marginal law, plane-conditional Cauchy identities recover
positivity for finite reciprocal arrangements.  For signed measures, an
exact phase-cancellation deficit equals twice the hidden negative mass,
yielding local-to-global positivity under a phase-norming condition weaker
than injectivity, a cross-sign collision criterion, and exclusion of
negative atoms.  The simplex is the maximal normalized real-weight class
for finite universality.

The law extends to probability measures under reciprocal integrability,
which we characterize by a Wiener--Dini belt series.  Finite Shannon entropy
is the sharp universal criterion for countable weights, and an
entropy--geometry theorem treats clustered measures.  Every compact carrier
of zero \(\mathcal H^1\)-measure is admissible, whereas a nonzero rectifiable
arc component forces divergence on a set of positive Haar measure.  In
orthogonal coordinates, the theorem also yields an exact scaled \(F\) law
for Pearson divergence from a fixed simplex vector to a
\(\Dir(1/2,\ldots,1/2)\) vector.
 \end{abstract}

\medskip
\noindent\textbf{2020 Mathematics Subject Classification.}
Primary 60E05; secondary 30J05, 60D05, 60G50, 52C35.

\smallskip
\noindent\textbf{Keywords.}
Cauchy invariance; Haar measure; hyperplane arrangement; logarithmic
gradient; random direction; beta distribution; random series; Clark measure;
Hausdorff dimension.

\section{Introduction}
\label{sec:introduction}

Let \(a_1,\ldots,a_k\) be arbitrary nonzero vectors in \(\R^p\), let
\(w_1,\ldots,w_k\) be nonnegative weights summing to one, and let
\(U\) be Haar-uniform on \(\sphere^{p-1}\).  Away from the arrangement
hyperplanes, consider
\[
  g_{a,w}(U)=\sum_{j=1}^k w_j\frac{a_j}{a_j^\top U},
  \qquad
  N_{a,w}(U)=\frac{g_{a,w}(U)}{\|g_{a,w}(U)\|}.
\]
The weight normalization gives \(U^\top g_{a,w}(U)=1\).  The angle between \(U\)
and \(N_{a,w}(U)\) therefore lies in \([0,\pi/2)\) and satisfies
\[
  \{U^\top N_{a,w}(U)\}^2=\frac{1}{\|g_{a,w}(U)\|^2}.
\]
Our starting phenomenon is that
\begin{equation}
  \{U^\top N_{a,w}(U)\}^2
  \sim
  \Beta\!\left(\frac12,\frac{p-1}{2}\right),
  \label{eq:intro-beta-law}
\end{equation}
independently of the number, positions, rank, or overcompleteness of the
directions and independently of the simplex weights.  The law in
\cref{eq:intro-beta-law} is exactly the squared-cosine law for two independent
Haar directions.  Thus a deterministic, generally non-Haar function of
\(U\) mimics independence through one scalar incidence observable.

There are two routes to \cref{eq:intro-beta-law}.  In the planar route, a
Herglotz boundary theorem proves the
weighted shifted-tangent Cauchy identity; a Haar-random two-plane supplies
one common uniform phase; and an exact Mellin equivalence lifts the Cauchy
projection back to the spherical Beta angle.  The Gaussian route then
specializes the Pillai--Meng Cauchy--L\'evy theorem
\cite{pillai-meng-2016}: choose the possibly singular Gram covariance
\(A^\top A\) and apply Gaussian polar decomposition.  The Pillai--Meng proof
itself uses the same weighted shifted-tangent identity after extracting a
common angular phase.  The planar derivation does not invoke the
Pillai--Meng theorem, while the two routes share this classical
one-dimensional Cauchy-preserver mechanism.  We present both before
developing the converse and infinite-arrangement theory.

The Pillai--Meng theorem contains the finite distributional law through its
positive-semidefinite covariance formulation.  The incidence viewpoint
identifies the associated Euler-normalized normal field and the
independent-Haar squared-cosine law realized by its deterministic coupling
with \(U\).  The planar route also reveals the conditional structure used
below for the converse and infinite-arrangement results.

The organizing principle is a hierarchy of information.  The unconditional
Beta marginal fixes only a tangent norm.  Plane-conditional Cauchy laws
recover substantially more reciprocal structure.  An oriented analytic
condition recovers the sign information that a symmetric distribution
necessarily loses.  Counterexamples at the first two transitions show why
each strengthening is needed.

The main results determine the scope of this phenomenon.

\begin{enumerate}[label=\textup{(\roman*)}]
\item For every measurable tangent field, the Beta law for its tangent norm
  is equivalent to a standard Cauchy projection onto one conditionally Haar
  tangent direction.  An explicit swirl field shows that this scalar
  equivalence does not characterize gradients or common phase.
\item A signed Herglotz tail theorem makes positivity equivalent to a
  standard Cauchy boundary law.  This yields conditional rigidity for each
  fixed finite arrangement.  In a nonlinear zonal class, an Abel-transform
  argument identifies every admissible magnitude and isolates the
  unavoidable measurable sign and radial-dual obstructions.
\item The incidence law extends to Borel probability measures whenever the
  reciprocal vector field is absolutely integrable almost surely.
  Moreover, this integrability forces almost every planar phase projection
  to be singular and recovers the conditional Cauchy mechanism.
\item For signed measures, the conditional laws make every planar
  pushforward positive and identify an exact phase-cancellation deficit equal
  to twice the hidden negative mass.  Positivity follows under a phase-norming
  condition that is strictly weaker than injectivity; a cross-sign collision
  theorem and a no-negative-atoms theorem give concrete consequences.
\item An exact Wiener--Dini series characterizes reciprocal integrability
  pointwise.  For countable positive arrangements, finite Shannon entropy is
  necessary and sufficient uniformly over deterministic directions, while
  an entropy--geometry gluing theorem covers clustered measures beyond that
  criterion.
\item At the critical geometric threshold, every compact carrier of zero
  one-dimensional Hausdorff measure is admissible, including
  dimension-one examples, whereas any nonzero rectifiable arc component
  forces divergence on a set of positive Haar measure.
\end{enumerate}

One coordinate consequence is especially easy to state.  If
\(S\sim\Dir(1/2,\ldots,1/2)\), then for every fixed simplex vector \(w\),
the Pearson divergence satisfies
\[
  D_{\chi^2}(w\Vert S)\overset d=(p-1)F_{p-1,1}.
\]
Thus the incidence theorem also produces a weight-free exact law in
classical distribution theory; see \cref{cor:pearson-divergence}.

The arrangement master functions and logarithmic one-forms underlying the
construction are classical \cite{cohen-denham-falk-varchenko-2012}.
Related logarithmic incidence potentials and log-cosine representations
occur in \cite{hejhal-2004,kalton-koldobsky-yaskin-yaskina-2007}, while
generalized cosine transforms provide a wider integral-geometric context
\cite{rubin-2008}.
The analytic ingredients belong to classical Herglotz--Clark,
Herglotz--Pick boundary-distribution, and Cauchy-preserver theory
\cite{clark-1972,pitman-williams-1967,williams-1969,letac-1977,
poltoratski-1996,aizenman-warzel-2015}.  Our focus is their
incidence-geometric synthesis: the
Haar pushforward, its independent planar proof, converse mechanisms, and
sharp finite and infinite boundaries.  We make no claim that the classical
Cauchy or boundary-distribution inputs are new.

The paper is organized as follows.
\cref{sec:incidence-object} introduces the radial--normal incidence geometry,
while \cref{sec:planar-engine,sec:two-frame-proof} develop the planar analytic
and random-plane foundation.  \cref{sec:finite-law-deductions} closes the
planar proof through the Beta--Cauchy equivalence and then records the
Pillai--Meng route.  The next three sections ask, successively, how reversible
and how extensible the finite theorem is.  \cref{sec:sharp-converses} shows why
the Beta marginal alone is too weak for a structural converse and identifies
stronger conditional forms that do yield rigidity.  \cref{sec:measure-arrangements}
passes from finite sums to probability and signed measures, and
\cref{sec:countable-boundary} determines when the absolute integrability needed
for that extension actually holds.  Finally, \cref{sec:equivalent-forms} gives
equivalent probabilistic and geometric forms.

\section{The incidence object}
\label{sec:incidence-object}

\subsection{Arbitrary central arrangements}

Fix an ambient dimension \(p\geq2\).  Let
\(a_1,\ldots,a_k\in\R^p\setminus\{0\}\) be any finite collection.  No
spanning, independence, or general-position hypothesis is imposed.  Let
\(w_j\geq0\), \(\sum_jw_j=1\), and discard indices with \(w_j=0\).

Define the exceptional set
\begin{equation}
  \mathcal H
  :=\bigcup_{j:w_j>0}
  \{u\in\sphere^{p-1}:a_j^\top u=0\}.
  \label{eq:exceptional-hyperplanes}
\end{equation}
It has spherical Haar measure zero.  On
\(\sphere^{p-1}\setminus\mathcal H\), define
\begin{align}
  \Phi_{a,w}(u)
  &:=\prod_{j:w_j>0}|a_j^\top u|^{w_j},
  \label{eq:master-function}\\
  g_{a,w}(u)
  &:=\nabla\log\Phi_{a,w}(u)
    =\sum_{j:w_j>0}w_j\frac{a_j}{a_j^\top u},
  \label{eq:log-gradient}\\
  N_{a,w}(u)
  &:=\frac{g_{a,w}(u)}{\|g_{a,w}(u)\|},
  \label{eq:unit-normal}\\
  H_{a,w}(u)
  &:=\frac{1}{\|g_{a,w}(u)\|^2}.
  \label{eq:incidence-functional}
\end{align}
The function \(\Phi_{a,w}\) is homogeneous of degree one.  Euler's identity
therefore gives
\begin{equation}
  u^\top g_{a,w}(u)=\sum_jw_j=1.
  \label{eq:euler-pairing}
\end{equation}
In particular, \(g_{a,w}(u)\neq0\), and \(N_{a,w}(u)\) is a well-defined
unit vector.  If \(\Theta_{a,w}(u)\) is the angle between \(u\) and
\(N_{a,w}(u)\), then
\begin{equation}
  \cos\Theta_{a,w}(u)
  =u^\top N_{a,w}(u)
  =\frac{1}{\|g_{a,w}(u)\|}>0,
  \qquad
  \cos^2\Theta_{a,w}(u)=H_{a,w}(u).
  \label{eq:angle-is-H}
\end{equation}
Thus \(\Theta_{a,w}(u)\in[0,\pi/2)\).  Cauchy--Schwarz applied to
\cref{eq:euler-pairing} also gives \(0<H_{a,w}(u)\leq1\).

\subsection{Tangent-hyperplane meaning}

For fixed \(u\notin\mathcal H\), consider the level hypersurface
\begin{equation}
  \mathcal M_u
  :=\{x:\Phi_{a,w}(x)=\Phi_{a,w}(u)\}
  \label{eq:level-surface}
\end{equation}
inside the chamber containing \(u\).  Its normal at \(u\) is
\(g_{a,w}(u)\).  By \cref{eq:euler-pairing}, its affine tangent hyperplane is
\begin{equation}
  \mathcal T_u
  =\{x:g_{a,w}(u)^\top x=1\}.
  \label{eq:tangent-hyperplane}
\end{equation}
Consequently,
\begin{equation}
  H_{a,w}(u)
  =\dist(0,\mathcal T_u)^2.
  \label{eq:tangent-distance}
\end{equation}
Thus the incidence variable is simultaneously the squared cosine in
\cref{eq:angle-is-H} and the squared origin-to-tangent-plane distance in
\cref{eq:tangent-distance}.

\subsection{Universal law}

\begin{theorem}[Universal Beta incidence-angle law]
\label{thm:universal-beta}
Let \(p\geq2\), let \(U\sim\Unif(\sphere^{p-1})\), and let the finite
collection \(a_1,\ldots,a_k\) and the simplex weights satisfy the conditions
above.  Then
\begin{equation}
  H_{a,w}(U)
  =\bigl\{U^\top N_{a,w}(U)\bigr\}^2
  \sim\Beta\!\left(\frac12,\frac{p-1}{2}\right).
  \label{eq:universal-beta}
\end{equation}
The distribution is independent of \(k\), of the locations and linear
relations among the \(a_j\)'s, and of the simplex weights.
\end{theorem}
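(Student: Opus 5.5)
Since \(u^\top g=1\), we can split \(g=u+t(u)\) with \(t(u)\perp u\) the tangential part, so \(\|g\|^2=1+\|t\|^2\). Then
\[
  H=\frac{1}{1+\|t(U)\|^2}.
\]
If \(V\) is another independent Haar direction, then \((U^\top V)^2=1/(1+\|W\|^2)\), where \(W\) is the tangent part of \(V/(U^\top V)\). The target law \(\Beta(\tfrac12,\tfrac{p-1}2)\) for \(H\) is therefore equivalent to \(\|t(U)\|^2\sim\BetaPrime(\tfrac{p-1}{2},\tfrac12)\), i.e.\ \(\|t\|^2\overset d=\chi^2_{p-1}/\chi^2_1\). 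This is the squared norm of a \((p-1)\)-dimensional multivariate Cauchy vector.

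I would take the Gaussian route, which avoids all boundary theory. Write \(U=Z/\|Z\|\) with \(Z\sim N(0,I_p)\). The field \(g\) is homogeneous of degree \(-1\), so \(t(Z)=\|Z\|\,t(U)\) in the obvious sense. Hence \(\|t(U)\|^2=\|Z\|^2\|g(Z)\|^2-1\), and with \(X_j:=a_j^\top Z\),
\[
  \|Z\|^2\|g(Z)\|^2=\|Z\|^2\Bigl\|\sum_j w_j a_j/X_j\Bigr\|^2 .
\]
The key step is the Pillai--Meng fact, with \(A=[a_1,\dots,a_k]\): for a centered Gaussian vector \(X\) with arbitrary (possibly singular) covariance \(A^\top A\) and any simplex weights, \(\sum_j w_j/X_j\) times the appropriate scale is again a centered Gaussian reciprocal. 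I will use it in a projected form. For every fixed unit vector \(e\), the scalar \(\sum_j w_j(e^\top a_j)/X_j\) should be distributed as \(e^\top Z'/(c^\top Z')\), a ratio of jointly Gaussian variables.

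To make this concrete, reduce to a Haar-random two-plane. Condition on the plane \(P=\mathrm{span}(Z,e)\) and write \(Z=R(\cos\theta\,f_1+\sin\theta\,f_2)\) with \(\theta\) uniform and independent of \(R\) and \(P\). Each \(X_j\) becomes \(R\,r_j\cos(\theta-\phi_j)\), so \(\sum_j w_j c_j/\cos(\theta-\phi_j)\) is a weighted sum of shifted tangents/secants in the common phase \(\theta\). The classical fact needed here is that \(\sum_j w_j\tan(\theta-\phi_j)\), and more generally any simplex-weighted combination of such Möbius images of one uniform phase, is exactly standard Cauchy up to location and scale. This follows from the boundary values of the Herglotz function \(\sum_j w_j\frac{e^{i\phi_j}+z}{e^{i\phi_j}-z}\) mapping the disk to the right half-plane, whose boundary law under uniform \(\theta\) is Cauchy by the mean-value property for \(e^{-s h}\). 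Applying this to \(e=U\) itself is forced by the Euler identity. For a conditionally Haar tangent direction \(e\perp U\), it gives \(e^\top t(U)\sim\Cauchy(0,1)\).

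Finally I convert the projection law back to the norm. By rotation invariance of \(U\) one can argue, or insert an independent uniform tangent direction \(E\), that \(E^\top t=\|t\|\,\xi\), where \(\xi\) is the first coordinate of a uniform point on \(\sphere^{p-2}\) independent of \(\|t\|\). Standard Cauchy means \(E^\top t\overset d=\|C\|\xi\) with \(C\) a \((p-1)\)-dimensional standard multivariate Cauchy vector. Mellin transforms \(\E|\cdot|^{s}\), \(-1<s<1\), factor over independent products, and \(\E|\xi|^s\) is nonvanishing there. Cancelling it identifies the Mellin transform of \(\|t\|\) on a strip, hence its law. So \(\|t\|^2\sim\BetaPrime(\tfrac{p-1}2,\tfrac12)\) and \(H\sim\Beta(\tfrac12,\tfrac{p-1}2)\).

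The main obstacle is justifying that the projection onto a random tangent direction is genuinely standard Cauchy regardless of how \(e\) relates to the arrangement. Concretely, one must show that in the random plane spanned by \(U\) and \(E\) the restricted field is a simplex-weighted Herglotz boundary function of a single uniform phase, and that the location is \(0\) and the scale is \(1\). The Euler identity pins the scale, and the symmetry \(E\mapsto-E\) pins the location. Independence of \(\theta\) from the plane under Haar measure is what makes the conditioning legitimate.
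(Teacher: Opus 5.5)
Your outline is the paper's planar proof: the tangent split, a conditionally Haar tangent direction, a common-phase Cauchy identity, and Mellin cancellation of the independent factor \(\E|\xi|^s\), which is exactly \cref{thm:tangent-projection-equivalence}. Your Herglotz argument through the bounded function \(e^{-sh}\), with \(h(0)=\sum_jw_j=1\), is a valid characteristic-function substitute for the paper's Cayley/moment argument. The gap is the crux: showing that the tangent projection is a simplex-weighted tangent sum in one uniform phase. You name this as ``the main obstacle'' but never carry it out, and the computation you write in its place fails on three counts.

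(1) The ``Pillai--Meng fact'' is misquoted. Pillai--Meng concerns \(\sum_jw_jX_j/Y_j\) with \emph{independent} copies \(X,Y\), i.e.\ \(Z_1^\top g(Z_2)\); a fixed \(e\) cannot replace the independent Gaussian \(Z_1\). For \(k=1\) and \(e\) parallel to \(a_1\), your scalar \(\sum_jw_j(e^\top a_j)/X_j\) is \(1/N\) with \(N\) standard normal. That is not a ratio of centered jointly Gaussian variables, since such a ratio is Cauchy or constant. Your unprojected claim also fails: for two independent coordinates, \(\tfrac12(1/X_1+1/X_2)\) has a continuous density that is positive at \(0\), so it is not a scaled Gaussian reciprocal. (2) For fixed \(e\), the angle of \(Z\) inside \(\operatorname{span}(Z,e)\) has density proportional to \(\sin^{p-2}\theta\), so it is not uniform when \(p\geq3\). (3) With \(c_j=e^\top a_j\) held fixed, \(\sum_jw_jc_j/\cos(\theta-\phi_j)\) is a secant sum with signed, unnormalized coefficients. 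For \(k=1\) and \(c_1\neq0\) it equals \(c_1/\cos(\theta-\phi_1)\), whose modulus never drops below \(|c_1|\), so it is not Cauchy.

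The missing idea is that the tangent direction must rotate \emph{with} \(U\) inside a Haar-random plane. Take \(E\) Haar on the unit sphere of \(U^\perp\). By right invariance of Haar measure on \(O(p)\) (\cref{lem:uniform-frame-phase}), conditional on the oriented plane \(P=\operatorname{span}(U,E)\) one has \(U=e_1\cos\theta+e_2\sin\theta\) and \(E=-e_1\sin\theta+e_2\cos\theta\) with the \emph{same} uniform \(\theta\). Write \(\operatorname{proj}_Pa_j=r_j(e_1\cos\alpha_j+e_2\sin\alpha_j)\). Then \(a_j^\top U=r_j\cos(\theta-\alpha_j)\) and \(a_j^\top E=-r_j\sin(\theta-\alpha_j)\), so \(E^\top t(U)=E^\top g(U)=-\sum_jw_j\tan(\theta-\alpha_j)\). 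The radii cancel, the coefficients are exactly the \(w_j\), and one phase is shared by all terms (\cref{eq:common-phase-ratios}). Your Herglotz lemma then gives \(\Cauchy(0,1)\) conditionally on almost every \(P\), with location and scale read off from \(h(0)=1\), and your Mellin step finishes. This is precisely the paper's argument. The Gaussian vector \(Z\) plays no role in it, and it does use boundary values, so ``a Gaussian route that avoids boundary theory'' does not describe what a correct version of your proof does.
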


If \(U_0,V_0\) are independent Haar directions on \(\sphere^{p-1}\), then
\begin{equation}
  (U_0^\top V_0)^2
  \sim\Beta\!\left(\frac12,\frac{p-1}{2}\right).
  \label{eq:independent-incidence}
\end{equation}
Hence \cref{thm:universal-beta} says
\begin{equation}
  U^\top N_{a,w}(U)
  \overset{d}{=}|U_0^\top V_0|.
  \label{eq:dependent-mimics-independent}
\end{equation}
The equality concerns only this one scalar observable.  The direction
\(N_{a,w}(U)\) is a deterministic function of \(U\), need not be Haar, and is
not independent of \(U\).

\section{The planar analytic engine}
\label{sec:planar-engine}

The independent planar proof begins with a one-dimensional boundary theorem.
Write
\(\mathbb T_\pi=\R/(\pi\mathbb Z)\), equipped with normalized Lebesgue
measure \(m_\pi\).

\subsection{Historical placement}

The finite rational ancestor is Boole's 1857 level-set identity for sums of
simple fractions with positive residues \cite{boole-1857}.  A probabilistic
line developed through the common-phase Cauchy preservers of Pitman--Williams
and Williams \cite{pitman-williams-1967,williams-1969}; its positive weighted
shifted-tangent identity was later used as Lemma~3.1 by Pillai--Meng
\cite{pillai-meng-2016}.  In a parallel disk-theoretic line, Nordgren proved
that the boundary map of an inner function fixing the origin preserves circle
Haar measure \cite{nordgren-1968}.  Herglotz--Clark representation turns that
statement into a Cauchy boundary law, and Letac placed such boundary functions
inside the theory of Cauchy-preserving maps
\cite{clark-1972,letac-1977,cima-matheson-ross-2006}.

Aizenman--Warzel made the half-plane connection especially explicit: their
Herglotz--Pick framework yields Cauchy boundary laws for positive singular
spectral measures and extends Boole's identity beyond finite point measures
\cite{aizenman-warzel-2015}.  For signed and complex measures,
Poltoratski's weak-* boundary-distribution theorem recovers the singular
total-variation measure from large values of the Cauchy transform
\cite{poltoratski-1996}; see also the signed/complex formulation in
\cite[Theorem~8.3]{cima-matheson-ross-2006}.  Thus the distributional
principles below are classical.  Their role here is to provide the precise
circle formulation needed by the incidence problem.  The secant condition
both identifies the boundary value with an ordinary tangent integral that is
absolutely convergent and forces the representing measure to be singular.

\subsection{A circle formulation}

\begin{lemma}[Secant integrability forces singularity]
\label{lem:secant-singularity}
Let \(\nu\) be a finite Borel measure on \(\mathbb T_\pi\).  If
\begin{equation}
  J_\nu(\phi)
  :=
  \int_{\mathbb T_\pi}
  \frac{1}{|\cos(\phi-\alpha)|}\,\nu(\dd\alpha)
  <\infty
  \quad\text{for \(m_\pi\)-almost every \(\phi\),}
  \label{eq:secant-integrability}
\end{equation}
then \(\nu\) is singular with respect to \(m_\pi\).
\end{lemma}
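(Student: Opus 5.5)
The plan is to argue by contradiction, using only the Lebesgue decomposition and the Lebesgue density theorem. Write \(\nu=\nu_{\mathrm{ac}}+\nu_{\mathrm s}\) with \(\nu_{\mathrm{ac}}=f\,m_\pi\), \(f\ge0\), \(f\in L^1(m_\pi)\), and \(\nu_{\mathrm s}\perp m_\pi\). If \(\nu\) is a signed measure, first replace it by \(|\nu|\); the natural reading of \cref{eq:secant-integrability} is that it holds for the total variation, and singularity of \(|\nu|\) is the same as singularity of \(\nu\). So assume \(\nu\ge0\) and suppose \(\nu\) is not singular, i.e.\ \(m_\pi(\{f>0\})>0\). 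Then there is a constant \(c>0\) such that \(E:=\{f>c\}\) has \(m_\pi(E)>0\). Positivity gives
\[
  J_\nu(\phi)\ \ge\ c\int_E\frac{m_\pi(\dd\alpha)}{|\cos(\phi-\alpha)|}
  \qquad\text{for every }\phi .
\]
It therefore suffices to show that the right-hand side is infinite on a set of \(\phi\) of positive \(m_\pi\)-measure.

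The key step is a divergence estimate at density points. Let \(\alpha_0\in E\) be a Lebesgue density point of \(E\); by the density theorem, almost every point of \(E\) is one. Set \(\phi_0=\alpha_0-\pi/2\). Near \(\alpha=\alpha_0\) we have \(|\cos(\phi_0-\alpha)|=|\sin(\alpha-\alpha_0)|\le|\alpha-\alpha_0|\). Hence
\[
  \int_E\frac{m_\pi(\dd\alpha)}{|\cos(\phi_0-\alpha)|}\ \gtrsim\ \int_{E\cap(\alpha_0-\delta,\alpha_0+\delta)}\frac{\dd\alpha}{|\alpha-\alpha_0|}.
\]
Split the right-hand integral into dyadic shells \(A_n=\{2^{-n-1}\le|\alpha-\alpha_0|<2^{-n}\}\). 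Density at \(\alpha_0\) gives \(|E\cap A_n|\ge\tfrac12|A_n|=2^{-n-1}\) for all large \(n\), because \(E\) fills all but an \(o(2^{-n})\) fraction of \((\alpha_0-2^{-n},\alpha_0+2^{-n})\). On \(A_n\) the integrand is at least \(2^{n}\), so each shell contributes at least \(1/2\). Summing over \(n\) shows the integral diverges, so \(J_\nu(\phi_0)=\infty\).

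The map \(\alpha_0\mapsto\alpha_0-\pi/2\) is a measure-preserving rotation of \(\mathbb T_\pi\). It therefore sends the set of density points of \(E\), which has measure \(m_\pi(E)>0\), onto a set of \(\phi\) of positive measure on which \(J_\nu=\infty\). This contradicts \cref{eq:secant-integrability}, so \(\nu\) must be singular.

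The main obstacle is conceptual rather than computational. A Fubini argument \(\int J_\nu\,\dd m_\pi\) is useless here: \(1/|\cos|\) is not integrable, so that integral is infinite for every nonzero \(\nu\), singular or not. The divergence must instead be obtained pointwise, and that is exactly what the density-point shell estimate provides. The absolutely continuous part is forced to produce a non-integrable \(1/|t|\) singularity at every density point, whereas a singular measure can avoid this for almost every \(\phi\).
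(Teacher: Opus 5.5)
Your proof is correct and follows essentially the same route as the paper. Both arguments find a positive-measure set of points where the absolutely continuous part has positive density, show that \(\int f(\alpha)/|\alpha-x|\,\dd\alpha\) diverges at each such point, and then transfer this to \(J_\nu=\infty\) on a rotated copy of that set using \(|\cos(\phi-\alpha)|=|\sin(\alpha-x)|\le|\alpha-x|\). The differences are cosmetic: you use density points of a superlevel set \(\{f>c\}\) with dyadic shells, whereas the paper uses Lebesgue points of \(f\) with a layer-cake estimate, and your remark about signed measures is unnecessary because the lemma concerns positive measures.
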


\begin{proof}
Write the absolutely continuous part of \(\nu\), in a local real
representative of the circle, as \(f(\alpha)\,\dd\alpha\).  If it is
nonzero, the set of Lebesgue points \(x\) with \(f(x)>0\) has positive
measure.  At each such point,
\[
  \int_{d_\pi(\alpha,x)<\varepsilon}
  \frac{f(\alpha)}{d_\pi(\alpha,x)}\,\dd\alpha
  =\infty,
\]
where \(d_\pi\) is circular distance.  Indeed, the mass of the
radius-\(t\) interval is asymptotic to \(2f(x)t\), and layer cake against
\(t^{-2}\dd t\) diverges.  For
\(\phi=x+\pi/2\) modulo \(\pi\),
\[
  |\cos(\phi-\alpha)|
  =|\sin(x-\alpha)|
  \leq d_\pi(\alpha,x)
\]
locally.  Thus \(J_\nu(\phi)=\infty\) on a positive-measure translate,
contrary to \cref{eq:secant-integrability}.
\end{proof}

\begin{theorem}[Planar Herglotz--Cauchy principle]
\label{thm:planar-cauchy}
Let \(\nu\) be a Borel probability measure on \(\mathbb T_\pi\) satisfying
\cref{eq:secant-integrability}.  Then the ordinary integral
\begin{equation}
  F_\nu(\phi)
  :=
  \int_{\mathbb T_\pi}\tan(\phi-\alpha)\,\nu(\dd\alpha)
  \label{eq:planar-tangent-transform}
\end{equation}
is absolutely convergent for \(m_\pi\)-almost every \(\phi\), and
\begin{equation}
  F_\nu(\Phi)\sim\Cauchy(0,1),
  \qquad \Phi\sim m_\pi.
  \label{eq:planar-cauchy-law}
\end{equation}
The representing measure \(\nu\) is necessarily singular.
\end{theorem}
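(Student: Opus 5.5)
The plan is to transfer everything to the unit disk via the doubling map \(\alpha\mapsto e^{2i\alpha}\) and recognize \(F_\nu\) as the boundary value of a Herglotz function.

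\textbf{Absolute convergence and singularity.} Absolute convergence is immediate, since \(|\tan(\phi-\alpha)|\le 1/|\cos(\phi-\alpha)|\) and \cref{eq:secant-integrability} holds almost everywhere. Singularity of \(\nu\) is exactly \cref{lem:secant-singularity}.

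\textbf{Step 1: the Herglotz function.} Let \(\mu\) be the pushforward of \(\nu\) on the unit circle \(\partial\D\) under \(\alpha\mapsto\zeta=e^{2i\alpha}\). This \(\mu\) is again a singular probability measure. Set
\[
  G(z)=\int_{\partial\D}\frac{\zeta+z}{\zeta-z}\,\mu(\dd\zeta),
\]
so that \(\operatorname{Re}G=P[\mu]>0\) on \(\D\) and \(G(0)=1\). For \(z=re^{2i\phi}\) one computes
\[
  i\,\frac{\zeta+z}{\zeta-z}\longrightarrow \cot(\alpha-\phi)
\]
as \(r\to1\), up to a sign convention which we fix by replacing \(\phi\) with \(\phi+\pi/2\). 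This shift is harmless, because \(m_\pi\) is translation invariant and the Cauchy law is symmetric. Concretely, \(\tan(\phi-\alpha)=-\cot(\phi+\pi/2-\alpha)\). The boundary identification \(\operatorname{Im}G(e^{2i\phi})=\pm F_\nu(\phi+\pi/2)\) then needs the following.

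\textbf{Step 2: the main obstacle, radial limits.} We must show that the radial limit of \(G\) equals the ordinary tangent integral, and that \(\operatorname{Re}G\to0\), for almost every \(\phi\). The real part tends to zero almost everywhere because \(\mu\) is singular: the Poisson integral of a singular measure has radial limit equal to its Radon--Nikodym derivative, which is \(0\) a.e. For the imaginary part, I would use dominated convergence directly rather than conjugate-function theory. One shows
\[
  \Bigl|\operatorname{Im}\frac{\zeta+re^{i\theta}}{\zeta-re^{i\theta}}\Bigr|
  =\frac{2r|\sin(\theta-t)|}{|\zeta-re^{i\theta}|^2}
  \le \frac{C}{|\sin((\theta-t)/2)|},
\]
using \(|\zeta-re^{i\theta}|\ge c|\sin((\theta-t)/2)|\) for \(r\) near \(1\). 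The right-hand side is integrable against \(\mu\) precisely at the points \(\phi\) where \(J_\nu<\infty\). This is where the secant hypothesis is used, and it justifies passing to the limit inside the integral.

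\textbf{Step 3: the distributional identity.} Let \(B=(G-1)/(G+1)\). This is an analytic self-map of \(\D\) with \(B(0)=0\). Since \(\operatorname{Re}G\to0\) a.e., \(|B|\to1\) a.e., so \(B\) is inner. By Nordgren's theorem, or more elementarily by Löwner's lemma, the boundary map of \(B\) pushes normalized Lebesgue measure on \(\partial\D\) to normalized Lebesgue measure. One proves this by checking Fourier coefficients: \(\int B^n\,\dd m=B(0)^n=0\) for \(n\ge1\), using bounded convergence from \(r<1\).

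The boundary values of \(G=(1+B)/(1-B)\) are therefore \(i\cot(\Psi/2)\) with \(\Psi\) uniform. Since \(\cot(\Psi/2)\) is standard Cauchy, \(\operatorname{Im}G\) on the boundary is standard Cauchy, and hence so is \(F_\nu(\Phi)\). The measure \(m_\pi\) on \(\phi\) corresponds to uniform measure on \(e^{2i\phi}\), so no change of measure is needed.
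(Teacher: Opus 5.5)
Your proposal is correct and follows essentially the same route as the paper. Both arguments build a Herglotz function on the disk and identify its radial boundary value with the tangent integral by dominated convergence under the secant bound. Both then pass by a Cayley transform to an inner function fixing the origin, use the vanishing-moment argument for Haar preservation, and apply inverse Cayley to get the Cauchy law.

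The only differences are cosmetic. The paper's kernel \(i(1-e^{-2i\alpha}z)/(1+e^{-2i\alpha}z)\) builds in your \(\pi/2\) rotation, and it dominates the full complex kernel by \(\sqrt2/|\cos(\phi-\alpha)|\), so the boundary value is real directly by dominated convergence. You instead get \(\operatorname{Re}G\to0\) from singularity via Fatou's theorem, which is also valid, though the same domination by \(1/|\sin((\theta-t)/2)|\) would handle the real part as well.
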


\begin{proof}
Absolute convergence follows from
\(|\tan x|\leq|\cos x|^{-1}\), and singularity is the conclusion of
\cref{lem:secant-singularity}.  Define, for \(z\in\D\),
\begin{equation}
  \mathcal F_\nu(z)
  :=
  \int_{\mathbb T_\pi}
  i\frac{1-e^{-2i\alpha}z}{1+e^{-2i\alpha}z}
  \,\nu(\dd\alpha).
  \label{eq:planar-herglotz-integral}
\end{equation}
This analytic function maps \(\D\) into the upper half-plane and satisfies
\(\mathcal F_\nu(0)=i\).  If \cref{eq:secant-integrability} holds at
\(\phi\), then, for \(1/2\leq\rho<1\),
\[
  \left|
    i\frac{1-\rho e^{2i(\phi-\alpha)}}
            {1+\rho e^{2i(\phi-\alpha)}}
  \right|
  \leq
  \frac{\sqrt2}{|\cos(\phi-\alpha)|}.
\]
Dominated convergence as \(\rho\uparrow1\) therefore identifies the radial
boundary value
\begin{equation}
  \mathcal F_\nu^*(e^{2i\phi})=F_\nu(\phi)
  \quad\text{for \(m_\pi\)-almost every \(\phi\).}
  \label{eq:planar-herglotz-boundary}
\end{equation}

The Cayley transform
\begin{equation}
  B_\nu(z)
  :=
  \frac{1+i\mathcal F_\nu(z)}
       {1-i\mathcal F_\nu(z)}
  \label{eq:planar-cayley-map}
\end{equation}
is a Schur function, \(B_\nu(0)=0\), and
\cref{eq:planar-herglotz-boundary} gives unimodular boundary values almost
everywhere.  Let \(m_{\mathbb T}\) be Haar probability on the unit circle,
let \(B_\nu^*\) denote the radial boundary function, and set
\(\lambda=(B_\nu^*)_\#m_{\mathbb T}\).  For every integer \(n\geq1\),
the mean-value identity followed by dominated radial convergence gives
\[
  \int_{\mathbb T}\zeta^n\,\lambda(\dd\zeta)
  =
  \int_{\mathbb T}\{B_\nu^*(\zeta)\}^n
       \,m_{\mathbb T}(\dd\zeta)
  =B_\nu(0)^n=0.
\]
The negative moments vanish by conjugation.  Fourier--Stieltjes uniqueness
therefore gives \(\lambda=m_{\mathbb T}\); this is the standard
measure-preserving property of an inner function fixing the origin
\cite{nordgren-1968}.

Finally, \(2\Phi\) is uniform modulo \(2\pi\), and the inverse Cayley
transform sends circle Haar measure to the standard Cauchy law.  Combining
this fact with \cref{eq:planar-herglotz-boundary} proves
\cref{eq:planar-cauchy-law}.
\end{proof}

\begin{corollary}[Weighted shifted-tangent identity]
\label{cor:weighted-tangent}
For arbitrary \(\alpha_1,\ldots,\alpha_k\in\mathbb T_\pi\), simplex
weights \(w_1,\ldots,w_k\), and \(\Phi\sim m_\pi\),
\begin{equation}
  \sum_{j=1}^k w_j\tan(\Phi-\alpha_j)
  \sim\Cauchy(0,1).
  \label{eq:weighted-tangent-preview}
\end{equation}
\end{corollary}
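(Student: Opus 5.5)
The plan is to apply \cref{thm:planar-cauchy} to the discrete probability measure
\[
  \nu:=\sum_{j=1}^k w_j\,\delta_{\alpha_j}
\]
on \(\mathbb T_\pi\).  Indices with \(w_j=0\) contribute nothing, so they can be discarded first; \(\nu\) is then a Borel probability measure because the weights lie in the simplex.  For this \(\nu\) the tangent transform \cref{eq:planar-tangent-transform} reduces to the finite sum
\[
  F_\nu(\phi)=\sum_j w_j\tan(\phi-\alpha_j),
\]
and \cref{eq:planar-cauchy-law} is then exactly \cref{eq:weighted-tangent-preview}.

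The only hypothesis to verify is the secant integrability \cref{eq:secant-integrability}.  Here
\[
  J_\nu(\phi)=\sum_j \frac{w_j}{|\cos(\phi-\alpha_j)|}
\]
is finite unless \(\phi\equiv\alpha_j+\pi/2 \pmod{\pi}\) for some \(j\) with \(w_j>0\).  That exceptional set is finite, hence \(m_\pi\)-null, so the hypothesis holds for almost every \(\phi\).

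There is no genuine obstacle.  Coincident angles \(\alpha_j=\alpha_{j'}\) need no separate treatment: they merge into a single atom of \(\nu\) whose mass is the sum of the weights, and the sum \(\sum_j w_j\tan(\Phi-\alpha_j)\) is unchanged by this merging.  The singularity conclusion of \cref{thm:planar-cauchy} is automatic for an atomic measure, so it is not needed here.  The one point worth stating explicitly is that \(F_\nu\) is the ordinary finite sum off the null exceptional set, so the almost-everywhere identification in \cref{eq:planar-herglotz-boundary} gives the law of the finite tangent sum itself.
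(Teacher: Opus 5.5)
Your proposal is correct and matches the paper's own proof: apply \cref{thm:planar-cauchy} to \(\nu=\sum_j w_j\delta_{\alpha_j}\), whose secant integral \(J_\nu\) is finite except at the finitely many phases \(\alpha_j+\pi/2\). Your remarks on zero weights and coincident angles are correct bookkeeping that the paper leaves implicit.
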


\begin{proof}
Apply \cref{thm:planar-cauchy} to
\(\nu=\sum_jw_j\delta_{\alpha_j}\).  Its secant integral is finite away
from finitely many phases.
\end{proof}

\begin{lemma}[Pole-tail formula for signed tangent sums]
\label{lem:signed-tangent-tail}
Let \(\alpha_1,\ldots,\alpha_m\) be distinct modulo \(\pi\), let
\(c_1,\ldots,c_m\) be nonzero real numbers, and let
\[
  F(\phi):=\sum_{\ell=1}^m c_\ell\tan(\phi-\alpha_\ell).
\]
For \(\Phi\sim m_\pi\),
\begin{equation}
  \lim_{x\to\infty}
  x\,\Prb\{|F(\Phi)|>x\}
  =
  \frac2\pi\sum_{\ell=1}^m|c_\ell|.
  \label{eq:signed-tangent-tail}
\end{equation}
Consequently, if \(\sum_\ell c_\ell=1\) and \(F(\Phi)\) is standard
Cauchy, then every \(c_\ell\) is positive.
\end{lemma}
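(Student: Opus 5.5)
We localize the tail of \(|F(\Phi)|\) at the poles.  Since the \(\alpha_\ell\) are distinct modulo \(\pi\), there is \(\delta>0\) so that the arcs \(I_\ell=\{\phi:d_\pi(\phi,\alpha_\ell+\pi/2)<\delta\}\) are pairwise disjoint.  The pole of the \(\ell\)-th term sits at \(\alpha_\ell+\pi/2\).  Outside \(\bigcup_\ell I_\ell\), \(F\) is bounded by some constant \(M\), so for \(x>M\) the event \(\{|F(\Phi)|>x\}\) lies inside \(\bigcup_\ell I_\ell\).  Hence it suffices to show, for each \(\ell\),
\[
  \lim_{x\to\infty}x\,m_\pi\bigl(\{\phi\in I_\ell:|F(\phi)|>x\}\bigr)=\frac{2}{\pi}|c_\ell|,
\]
and then sum over \(\ell\).

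For the local computation, write \(\phi=\alpha_\ell+\pi/2+t\) with \(|t|<\delta\).  Then \(\tan(\phi-\alpha_\ell)=-\cot t\), so on \(I_\ell\)
\[
  F(\phi)=-c_\ell\cot t+R_\ell(t),
\]
where \(R_\ell\) is bounded by some \(C_\ell\).  Moreover \(-c_\ell\cot t=-c_\ell/t+O(t)\).  Therefore \(|F|=|c_\ell|/|t|+O(1)\) uniformly on \(I_\ell\).  It follows that for large \(x\),
\[
  \Bigl\{|t|<\tfrac{|c_\ell|}{x+C}\Bigr\}\subseteq\{|F|>x\}\cap I_\ell\subseteq\Bigl\{|t|<\tfrac{|c_\ell|}{x-C}\Bigr\}.
\]
Normalized Lebesgue measure on \(\mathbb T_\pi\) assigns mass \(2r/\pi\) to \(\{|t|<r\}\).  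So the \(m_\pi\)-measure of the middle set is \(\frac{2|c_\ell|}{\pi x}(1+o(1))\), which gives the local limit and hence \cref{eq:signed-tangent-tail}.

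For the consequence, a standard Cauchy variable \(C\) satisfies \(\Prb\{|C|>x\}=1-\frac{2}{\pi}\arctan x\sim\frac{2}{\pi x}\).  Comparing with \cref{eq:signed-tangent-tail} forces \(\sum_\ell|c_\ell|=1=\sum_\ell c_\ell\).  Equality \(\sum_\ell(|c_\ell|-c_\ell)=0\) with nonnegative summands gives \(c_\ell=|c_\ell|\), and since each \(c_\ell\ne0\), every \(c_\ell>0\).

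The only delicate point is the uniform two-sided sandwich near each pole, that is, controlling \(R_\ell\) and the \(O(t)\) correction so the inclusions hold for all large \(x\).  This is routine once \(\delta\) is fixed by the distinctness hypothesis, which guarantees that no other term has a pole in \(I_\ell\).
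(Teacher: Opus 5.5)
Your proposal is correct and follows essentially the same route as the paper: disjoint neighborhoods of the poles \(\alpha_\ell+\pi/2\), boundedness of \(F\) away from them, the expansion \(F=-c_\ell/t+O(1)\) with a two-sided squeeze giving local measure \(2|c_\ell|/(\pi x)+o(1/x)\), and equality in the triangle inequality for the consequence. Your explicit sandwich \(\{|t|<|c_\ell|/(x+C)\}\subseteq\{|F|>x\}\cap I_\ell\subseteq\{|t|<|c_\ell|/(x-C)\}\) simply spells out the squeeze that the paper states in one line.
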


\begin{proof}
The poles \(\beta_\ell=\alpha_\ell+\pi/2\) are distinct.  Choose disjoint
circle neighborhoods of them.  As \(h\to0\),
\[
  F(\beta_\ell+h)=-\frac{c_\ell}{h}+O(1),
\]
while \(F\) is bounded outside those neighborhoods.  A squeeze at each pole
therefore shows that the set where \(|F|>x\) has local length
\(2|c_\ell|x^{-1}+o(x^{-1})\).  Summing the disjoint contributions and
dividing by the circle length \(\pi\) proves
\cref{eq:signed-tangent-tail}.  A standard Cauchy variable has tail
constant \(2/\pi\).  Hence
\(\sum_\ell|c_\ell|=1=|\sum_\ell c_\ell|\), and equality in the triangle
inequality forces all coefficients to have the same positive sign.
\end{proof}

The distinctness assumption is essential because coincident poles reveal
only the combined residue.  For example,
\(2\tan\Phi-\tan\Phi=\tan\Phi\) is standard Cauchy although one displayed
coefficient is negative.  This is why projectively coincident directions
are combined before \cref{lem:signed-tangent-tail} is used below.

\begin{theorem}[Signed one-plane rigidity]
\label{thm:signed-clark-rigidity}
Let \(\nu\) be a finite real signed measure on \(\mathbb T_\pi\), singular
with respect to \(m_\pi\), and suppose that \(\nu(\mathbb T_\pi)=1\).
Define
\[
  \mathcal F_\nu(z)
  :=
  \int_{\mathbb T_\pi}
  i\frac{1-e^{-2i\alpha}z}{1+e^{-2i\alpha}z}
  \,\nu(\dd\alpha),
\]
and let \(\mathcal F_\nu^*\) be its nontangential boundary value, which is
finite and real almost everywhere.  If
\(S(\alpha)=\alpha+\pi/2\) modulo \(\pi\), then, weak-* as finite measures,
\begin{equation}
  \pi x\,\boldsymbol 1_{\{|\mathcal F_\nu^*|>x\}}m_\pi
  \longrightarrow 2S_\#|\nu|.
  \label{eq:signed-clark-local-tail}
\end{equation}
In particular,
\begin{equation}
  \lim_{x\to\infty}
  x\,m_\pi\{|\mathcal F_\nu^*(e^{2i\phi})|>x\}
  =
  \frac2\pi\|\nu\|_{\mathrm{TV}}.
  \label{eq:signed-clark-tail}
\end{equation}
Consequently,
\[
  \mathcal F_\nu^*(e^{2i\Phi})\sim\Cauchy(0,1)
  \quad\Longleftrightarrow\quad
  \nu\ \text{is a positive probability measure}.
\]
\end{theorem}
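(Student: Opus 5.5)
The plan is to reduce the signed statement to the classical Poltoratski boundary-distribution theorem for Cauchy transforms of singular measures \cite{poltoratski-1996}; see also \cite[Theorem~8.3]{cima-matheson-ross-2006}. The rigidity conclusion will then follow from tail constants, exactly as in \cref{lem:signed-tangent-tail}.

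\textbf{Step 1: change of variables.} Put \(\zeta=e^{2i\phi}\) and push \(\nu\) forward under \(\alpha\mapsto -e^{2i\alpha}\), giving a real signed singular measure \(\mu\) on \(\mathbb T\). With this choice \(i(1-e^{-2i\alpha}z)/(1+e^{-2i\alpha}z)=-i(\xi+z)/(\xi-z)\) for \(\xi=-e^{2i\alpha}\). Hence \(\mathcal F_\nu=-i\,\mathcal H\mu\), where \(\mathcal H\mu\) is the Herglotz integral of \(\mu\), and the pole of the boundary function sits at \(\zeta=\xi\), i.e.\ at \(\phi=\alpha+\pi/2=S(\alpha)\). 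Because \(\mu\) is singular, the real part of \(\mathcal H\mu\), the Poisson integral of \(\mu\), has nontangential limit \(0\) almost everywhere. Thus \(\mathcal F_\nu^*\) is real and finite a.e., and it equals the conjugate-function boundary value (a Cauchy-type transform of \(\mu\)).

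\textbf{Step 2: local tail asymptotics.} Poltoratski's theorem states that for a finite real singular measure \(\mu\), \(\pi x\,\boldsymbol 1_{\{|K\mu|>x\}}m\to |\mu|\) weak-*, after the appropriate normalization of the Cauchy transform \(K\mu\). I will check the constants by applying it to a single atom \(\nu=\delta_\alpha\). Then \(\mathcal F^*=\tan(\phi-\alpha)\), and the set \(\{|\tan|>x\}\) has \(m_\pi\)-mass \((2/\pi)\arctan(1/x)\sim 2/(\pi x)\), concentrated at \(S(\alpha)\). This gives the limit \(2\delta_{S(\alpha)}\), which fixes the constant as \(2S_\#|\nu|\). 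For general signed measures, linearity of the transform together with Poltoratski's theorem (whose content is precisely that positive and negative singular parts do not cancel in the limit) gives \cref{eq:signed-clark-local-tail}.

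\textbf{Main obstacle.} I expect the delicate point to be the normalization bookkeeping between \(\mathbb T_\pi\) and \(\mathbb T\), and between the Herglotz and Cauchy kernels. If a self-contained route is preferred, I would first prove the positive case directly. For positive \(\nu\), \(B_\nu\) is inner by \cref{thm:planar-cauchy}, and Aleksandrov disintegration gives \(m=\int\sigma_\beta\,\dd\beta\) for its Clark measures \(\sigma_\beta\). The level sets \(\{|\mathcal F^*|>x\}\) correspond to \(B^*\) near \(-1\), and the Clark measure at \(-1\) is \(\mu\) itself. The general signed case would then be quoted from Poltoratski.

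\textbf{Step 3: consequences.} Testing the weak-* convergence against the constant function \(1\) gives \cref{eq:signed-clark-tail}, since \(S\) preserves total mass. For the equivalence: if \(\nu\) is a positive probability measure, then \(\nu\) is singular and positive, and the proof of \cref{thm:planar-cauchy} from the Schur-function step onward applies verbatim. That argument used only positivity, to make \(\mathcal F_\nu\) map into the upper half-plane, together with a.e.\ real boundary values, which now come from singularity rather than secant integrability. So the boundary law is \(\Cauchy(0,1)\). Conversely, a standard Cauchy law has tail constant \(2/\pi\), so \(\|\nu\|_{\mathrm{TV}}=1=\nu(\mathbb T_\pi)\). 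Hence \(\nu^-=0\), and \(\nu\) is a positive probability measure.
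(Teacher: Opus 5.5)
Your proposal follows the paper's proof essentially step for step: the pushforward under \(\alpha\mapsto-e^{2i\alpha}\), Poltoratski's boundary-distribution theorem, Fatou's theorem for the vanishing Poisson part, testing against the constant function, and the tail-constant Jordan argument, with the positive direction reduced to the inner-function argument of \cref{thm:planar-cauchy}. The one step you leave loose (the ``appropriate normalization'', fixed by an atom calibration) the paper makes exact: with \(K_\tau(z)=\int(1-\overline\zeta z)^{-1}\,\tau(\dd\zeta)\) one has \(\mathcal F_\nu=i(2K_\tau-1)\) (the sign is \(+i\), not \(-i\), as \(\mathcal F_\nu(0)=i\) shows), and \(\operatorname{Re}(2K_\tau^*-1)=0\) gives \(|K_\tau^*|=\tfrac12\sqrt{1+|\mathcal F_\nu^*|^2}\), so \(\{|\mathcal F_\nu^*|>x\}=\{|K_\tau^*|>\sqrt{1+x^2}/2\}\) exactly and the factor \(2\) arises as \(2x/\sqrt{1+x^2}\to2\).
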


\begin{proof}
Push \(\nu\) to a signed measure \(\tau\) on the unit circle under
\(\alpha\mapsto-e^{2i\alpha}\), and write
\[
  K_\tau(z):=\int_{\mathbb T}\frac{1}{1-\overline\zeta z}
  \,\tau(\dd\zeta).
\]
Then \(\mathcal F_\nu=i(2K_\tau-1)\).  The boundary-distribution theorem for
Cauchy integrals reconstructs the singular total-variation measure weak-*
from the two-sided large-value sets: for every finite complex measure
\(\tau\),
\[
  \pi\lambda\,\boldsymbol 1_{\{|K_\tau^*|>\lambda\}}m_{\mathbb T}
  \longrightarrow |\tau_s|.
\]
This is Poltoratski's signed/complex theorem \cite{poltoratski-1996}; see
also \cite[Theorem~8.3]{cima-matheson-ross-2006}.  Standard Cauchy-transform
boundary theory also supplies finite nontangential boundary values almost
everywhere.  Since \(\tau\) is real, singular, and has mass one,
\[
  2\operatorname{Re}K_\tau-1=P[\tau]
\]
where \(P[\tau]\) is the Poisson integral.  Fatou's theorem therefore gives
\(\operatorname{Re}(2K_\tau^*-1)=0\) almost everywhere.  Consequently,
\[
  |K_\tau^*|=\frac12\sqrt{1+|\mathcal F_\nu^*|^2}.
\]
Taking \(\lambda=\sqrt{1+x^2}/2\), noting that \(x/\lambda\to2\), and
transporting circle Haar measure through the phase parametrization proves
\cref{eq:signed-clark-local-tail}; the pole corresponding to \(\alpha\)
occurs at \(S(\alpha)\).
Taking total masses in \cref{eq:signed-clark-local-tail} proves
\cref{eq:signed-clark-tail}.  If the boundary variable is
standard Cauchy, its two-sided tail constant is \(2/\pi\), so
\(\|\nu\|_{\mathrm{TV}}=1=\nu(\mathbb T_\pi)\).  The Jordan decomposition
then forces the negative part of \(\nu\) to vanish.  Conversely, positivity
reduces the claim to the inner-function argument in
\cref{thm:planar-cauchy}; no secant hypothesis is needed for the
nontangential boundary statement.
\end{proof}

\begin{remark}
\cref{thm:signed-clark-rigidity} concerns nontangential Herglotz boundary
values.  To identify those values with the ordinary integral
\(\int\tan(\phi-\alpha)\,\nu(\dd\alpha)\), one must separately assume
absolute secant integrability for the total variation measure.  The finite
pole formula in \cref{lem:signed-tangent-tail} is the elementary atomic
case of the same tail principle.
\end{remark}

\begin{lemma}[Ordinary tangent integrals are Herglotz boundary values]
\label{lem:signed-ordinary-boundary}
Let \(\nu\) be a finite real signed measure on \(\mathbb T_\pi\) and suppose
that
\[
  J_{|\nu|}(\phi)
  =\int_{\mathbb T_\pi}
    \frac{1}{|\cos(\phi-\alpha)|}\,|\nu|(\dd\alpha)
  <\infty
\]
for almost every \(\phi\).  Then the ordinary integral
\[
  \int_{\mathbb T_\pi}\tan(\phi-\alpha)\,\nu(\dd\alpha)
\]
is absolutely convergent and agrees almost everywhere with the
nontangential boundary value of \(\mathcal F_\nu\).
\end{lemma}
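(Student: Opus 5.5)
The plan is to split \(\nu\) into its Jordan parts \(\nu=\nu^+-\nu^-\), both dominated by \(|\nu|\), and treat each as a positive measure.

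\textbf{Step 1: absolute convergence.} Since \(|\tan x|\le|\cos x|^{-1}\), the integral \(\int\tan(\phi-\alpha)\,\nu(\dd\alpha)\) converges absolutely at every \(\phi\) where \(J_{|\nu|}(\phi)<\infty\), which is almost every \(\phi\).

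\textbf{Step 2: radial limits.} At such a \(\phi\) I use the pointwise bound already established in the proof of \cref{thm:planar-cauchy}: for \(1/2\le\rho<1\),
\[
\left|i\frac{1-\rho e^{2i(\phi-\alpha)}}{1+\rho e^{2i(\phi-\alpha)}}\right|\le\frac{\sqrt2}{|\cos(\phi-\alpha)|}.
\]
This majorant is \(|\nu|\)-integrable. The integrand converges pointwise to \(\tan(\phi-\alpha)\) whenever \(\cos(\phi-\alpha)\neq0\). The set \(\{\alpha:\cos(\phi-\alpha)=0\}\) is a single point, and it has \(|\nu|\)-measure zero because otherwise \(J_{|\nu|}(\phi)\) would be infinite. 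Dominated convergence with respect to \(|\nu|\), applied to \(\nu^+\) and \(\nu^-\) separately, then shows that the radial limit \(\lim_{\rho\uparrow1}\mathcal F_\nu(\rho e^{2i\phi})\) exists and equals the ordinary integral. The sign convention is fixed exactly as in \cref{eq:planar-herglotz-boundary}.

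\textbf{Step 3: from radial to nontangential values.} By linearity, \(\mathcal F_\nu\) is \(i(2K_\tau-1)\) for a Cauchy integral \(K_\tau\) of a finite measure, as in the proof of \cref{thm:signed-clark-rigidity}. Standard Cauchy-transform theory then gives finite nontangential limits almost everywhere. Where a nontangential limit exists, it coincides with the radial limit. Intersecting the two full-measure sets gives agreement almost everywhere.

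The main obstacle is this last identification. If the cited boundary theory were unavailable, I would instead write \(\mathcal F_\nu=\mathcal F_{\nu^+}-\mathcal F_{\nu^-}\). Each term is \(\nu^\pm(\mathbb T_\pi)\) times a Herglotz function, so it has nontangential limits almost everywhere by the classical Fatou theorem for functions of bounded type (or is trivially zero if \(\nu^\pm=0\)). Radial and nontangential limits then again agree almost everywhere.
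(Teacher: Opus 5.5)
Your proposal is correct and follows essentially the same route as the paper: dominated convergence with the \(\sqrt2/|\cos(\phi-\alpha)|\) majorant against \(|\nu|\) identifies the radial limit. Writing \(\mathcal F_\nu\) as a difference of two positive Herglotz/Cauchy transforms then supplies almost-everywhere nontangential limits, which must coincide with the radial ones. One harmless slip: the lemma does not assume mass one, so for a general finite \(\nu\) the identity reads \(\mathcal F_\nu=i\{2K_\tau-\nu(\mathbb T_\pi)\}\) rather than \(i(2K_\tau-1)\), and the constant does not affect boundary behavior.
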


\begin{proof}
The domination estimate used in the proof of \cref{thm:planar-cauchy}
applies to \(|\nu|\) after normalization and identifies the radial limit by
dominated convergence.  A Cauchy transform of a finite signed measure is a
difference of two positive Cauchy transforms and belongs to \(H^q\) for
every \(0<q<1\).  Its radial and nontangential boundary values therefore
exist and agree almost everywhere \cite{duren-1970,cima-matheson-ross-2006}.
\end{proof}

\begin{theorem}[Oriented analytic converse]
\label{thm:analytic-clark-converse}
Let \(h:\mathbb T_\pi\to\R\) be measurable and finite almost everywhere,
and put
\begin{equation}
  b_h(e^{2i\phi})
  :=
  \frac{1+ih(\phi)}{1-ih(\phi)}.
  \label{eq:boundary-cayley}
\end{equation}
Assume that \(b_h\) is the radial boundary function of a Schur function
\(B\) on \(\D\).  Then \(B\) is inner, and
\[
  h(\Phi)\sim\Cauchy(0,1)
  \quad\Longleftrightarrow\quad
  B(0)=0.
\]
When these equivalent conditions hold, \(i(1-B)/(1+B)\) has a unique Herglotz
representation by a singular probability measure.  Conversely, every
singular probability measure gives an inner \(B\) fixing the origin and a
Cauchy boundary function in this way.
\end{theorem}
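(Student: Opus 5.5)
The plan has three parts: show that \(B\) is inner, prove the equivalence in both directions, and then handle the Herglotz representation and its converse.

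\emph{\(B\) is inner.} Since \(h\) is real and finite almost everywhere, \(|b_h|=1\) almost everywhere on \(\mathbb T\). A Schur function whose radial boundary values are unimodular almost everywhere is inner by definition, so this step is immediate.

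\emph{The equivalence.} For the direction \(B(0)=0\Rightarrow\) Cauchy, I would repeat the moment argument from the proof of \cref{thm:planar-cauchy}.
\begin{itemize}
\item For \(n\ge1\), the mean-value property and bounded radial convergence give \(\int (B^*)^n\,\dd m_{\mathbb T}=B(0)^n=0\).
\item Conjugation kills the negative moments, so \(B^*_\# m_{\mathbb T}=m_{\mathbb T}\).
\item Since \(2\Phi\) is uniform modulo \(2\pi\) and \(h=-i(b_h-1)/(b_h+1)\) is the inverse Cayley transform, \(h(\Phi)\) is standard Cauchy.
\end{itemize}
For the reverse direction, suppose \(h(\Phi)\sim\Cauchy(0,1)\). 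Then \(b_h(e^{2i\Phi})\) is uniform on the circle, because the Cayley transform pushes the standard Cauchy law to Haar measure. Hence \(\int B^*\,\dd m_{\mathbb T}=0\). Since \(B\) is bounded, it is the Poisson integral of \(B^*\), so \(B(0)=\int B^*\,\dd m_{\mathbb T}=0\).

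\emph{Herglotz representation.} Assume \(B(0)=0\) and set \(\mathcal F=i(1-B)/(1+B)\). One checks \(\operatorname{Im}\mathcal F=(1-|B|^2)/|1+B|^2\ge0\), and \(\mathcal F(0)=i\).
\begin{itemize}
\item By the Herglotz theorem, \(\operatorname{Im}\mathcal F\) is the Poisson integral of a unique positive measure, of total mass \(\operatorname{Im}\mathcal F(0)=1\), and \(\mathcal F\) is determined by \(\operatorname{Re}\mathcal F(0)=0\). Matching this with the kernel \(i(1-e^{-2i\alpha}z)/(1+e^{-2i\alpha}z)\) gives a unique probability measure \(\nu\) on \(\mathbb T_\pi\), via \(\alpha\mapsto -e^{2i\alpha}\).
\item \(\nu\) is singular. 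Its absolutely continuous density is the boundary value of \(\operatorname{Im}\mathcal F\), which is \((1-|B^*|^2)/|1+B^*|^2=0\) almost everywhere because \(B\) is inner. The points where \(B^*=-1\) form a null set, because \(B^*_\#m_{\mathbb T}=m_{\mathbb T}\).
\end{itemize}

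\emph{Converse.} Given a singular probability measure \(\nu\), define \(\mathcal F_\nu\) as in \cref{thm:signed-clark-rigidity} and let \(B\) be its Cayley transform.
\begin{itemize}
\item \(B\) is Schur, and \(B(0)=0\) because \(\mathcal F_\nu(0)=i\).
\item Singularity makes the Poisson integral of \(\nu\) vanish at almost every boundary point. This gives real, finite boundary values \(\mathcal F_\nu^*\) almost everywhere, so \(B\) is inner. The finiteness is needed to exclude \(B^*=-1\); it comes from Cauchy-transform boundary theory, as in \cref{lem:signed-ordinary-boundary}.
\item Take \(h=\mathcal F_\nu^*\). Then \(b_h=B^*\), and the first part of the argument shows that \(h(\Phi)\) is Cauchy.
\end{itemize}

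The main obstacle is this converse step, specifically justifying finite real boundary values for a general singular \(\nu\) without a secant hypothesis. I would cite the classical nontangential boundary theory for Cauchy transforms invoked in \cref{thm:signed-clark-rigidity}.
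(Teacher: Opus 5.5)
Your proposal is correct and follows essentially the paper's own proof: unimodularity of \(b_h\) makes \(B\) inner, the mean-value identity \(B(0)=\int B^*\,\dd m_{\mathbb T}\) and the moment argument of \cref{thm:planar-cauchy} give the two directions, and the Herglotz and Clark assertions, which the paper simply cites, are the standard facts you spell out. The obstacle you flag in the converse is not really one: \(1-|B|^2=4\operatorname{Im}\mathcal F_\nu/|1-i\mathcal F_\nu|^2\leq4\operatorname{Im}\mathcal F_\nu\), which tends to \(0\) radially almost everywhere for singular \(\nu\), so \(B\) is already inner. Measure preservation from \(B(0)=0\) then gives \(B^*\neq-1\) almost everywhere, so \(\mathcal F_\nu^*=i(1-B^*)/(1+B^*)\) is finite and real without any Cauchy-transform boundary theory.
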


\begin{proof}
Because \(h\) is real, \(|b_h|=1\) almost everywhere, so the assumed Schur
function is inner.  If \(h(\Phi)\) is standard Cauchy, then
\((b_h)_\#m_{\mathbb T}=m_{\mathbb T}\), whence
\[
  B(0)=\int_{\mathbb T}b_h\,\dd m_{\mathbb T}=0.
\]
Conversely, if \(B(0)=0\), the moment argument in the proof of
\cref{thm:planar-cauchy} shows that its boundary map preserves Haar
measure, and inverse Cayley gives the Cauchy law.  The remaining assertions
are the normalized Herglotz representation and the Clark correspondence
\cite{clark-1972,cima-matheson-ross-2006}.
\end{proof}

\begin{remark}[Why Cauchy preservation alone is not a Clark converse]
\label{rem:measurable-not-inner}
Without the Schur boundary hypothesis in
\cref{thm:analytic-clark-converse}, the conclusion is false.  For example,
\[
  h(\phi)=-\tan\phi,
  \qquad
  b_h(e^{2i\phi})=e^{-2i\phi}.
\]
The first variable is standard Cauchy and the second map preserves Haar
measure, but it is anti-analytic and is not the boundary function of any
member of \(H^\infty(\D)\).  Thus the Cauchy law identifies measurable
circle-measure preservation, not an analytic orientation.  Even in the
inner case, Herglotz theory gives a nontangential boundary value; the
ordinary integral in \cref{eq:planar-tangent-transform} requires the
separate secant condition \cref{eq:secant-integrability}.
\end{remark}

\section{The random-two-frame proof}
\label{sec:two-frame-proof}

The argument combines spherical Haar geometry, the weighted tangent identity
in \cref{eq:weighted-tangent-preview}, and a Mellin identification step.

\subsection{Uniform phase on a random plane}

Let \(U\) be Haar on \(\sphere^{p-1}\).  Conditional on \(U\), choose \(V\)
Haar-uniform on
\[
  \{v\in\sphere^{p-1}:v^\top U=0\}.
\]
Then \((U,V)\) is a Haar orthonormal two-frame.  Equivalently, it is the first
two columns of a Haar matrix in \(O(p)\).

\begin{lemma}[Uniform orientation within the random plane]
\label{lem:uniform-frame-phase}
Let \(P=\operatorname{span}\{U,V\}\), equipped with the orientation induced
by the ordered frame.  Conditional on the oriented plane \(P\), the frame's
in-plane rotation is Haar-uniform on \(SO(2)\).
\end{lemma}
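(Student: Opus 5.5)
The plan is to exploit the invariance of the Haar two-frame under right multiplication by planar rotations. Write the frame as the \(p\times 2\) matrix \(F=[U\;V]\), which is the first two columns of a Haar matrix \(Q\in O(p)\). For \(R_\theta\in SO(2)\), embed it as the block-diagonal matrix \(\operatorname{diag}(R_\theta,I_{p-2})\in O(p)\). Right multiplication by this matrix changes \(F\) to \(FR_\theta\) and leaves the remaining columns alone. It also preserves the oriented span \(P\), because \(\det R_\theta=1\) keeps the induced orientation.

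By right invariance of Haar measure on \(O(p)\), \(Q\) and \(Q\operatorname{diag}(R_\theta,I)\) have the same law for each fixed \(\theta\). Now let \(\Theta\) be uniform on \([0,2\pi)\) and independent of \(Q\). Then \(Q\) and \(Q\operatorname{diag}(R_\Theta,I)\) also have the same law, and so \((P,F)\overset d=(P,FR_\Theta)\).

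To pass to the conditional statement, I would work with a measurable identification of frames with \((P,\text{rotation})\). Choose a measurable reference oriented orthonormal basis \(E(P)\) of each oriented plane; this is possible off a null set of the oriented Grassmannian, for example by Gram--Schmidt on projections of fixed coordinate vectors. Then write \(F=E(P)R_{\Psi}\) with \(\Psi\in SO(2)\) a measurable function of \(F\).

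Under this identification the map \(F\mapsto FR_\theta\) acts as \((P,\Psi)\mapsto(P,\Psi R_\theta)\). The equality in law above then says that the joint law of \((P,\Psi)\) equals that of \((P,\Psi R_\Theta)\) with \(\Theta\) independent. Conditioning on \(P\), the conditional law of \(\Psi R_\Theta\) is the convolution of the conditional law of \(\Psi\) with Haar measure on \(SO(2)\), which is Haar. Hence, for almost every \(P\), the conditional law of \(\Psi\) is Haar on \(SO(2)\). Equivalently, this conditional law is invariant under every fixed rotation, and on a compact group an invariant probability measure must be Haar.

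The main obstacle is rigor in the conditioning step. One must make sure that the disintegration over \(P\) is compatible with the simultaneous invariance for all \(\theta\), and not merely for each \(\theta\) with its own null set. I would handle this by averaging over an independent uniform \(\Theta\), as above, rather than fixing \(\theta\). This directly yields Haar conditional laws, with no need to take a countable dense set of rotations and invoke continuity. The measurable choice of \(E(P)\) is routine but should be stated explicitly.
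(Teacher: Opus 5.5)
Your proof is correct, and its skeleton matches the paper's. Both use right invariance of Haar measure on $O(p)$ under $\operatorname{diag}(R_\theta,I_{p-2})$, which fixes the oriented plane and shifts the in-plane angle, followed by disintegration over the oriented Grassmannian. The difference is in how invariance for each fixed rotation becomes a statement about conditional laws.

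The paper restricts $\delta$ to a countable dense subgroup of $SO(2)$ and discards one common null set of planes. It then extends invariance to all of $SO(2)$ by weak continuity of translation, and concludes by uniqueness of Haar probability.

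You instead randomize the rotation with an independent uniform $\Theta$. The conditional law of $\Psi R_\Theta$ given $P$ is then $\mu_P * m_{SO(2)}$, which equals $m_{SO(2)}$ whatever $\mu_P$ is. Almost-everywhere uniqueness of the disintegration of the equal joint laws of $(P,\Psi)$ and $(P,\Psi R_\Theta)$ then finishes the argument.

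What your route buys is that it avoids the null-set bookkeeping and the continuity step altogether. It also replaces the uniqueness theorem for invariant measures by the elementary identity $\mu * m = m$. Your explicit measurable reference basis $E(P)$ (for instance $\operatorname{proj}_P e_1$ normalized and completed by orientation) also pins down the ``in-plane rotation'' that the paper's proof leaves implicit.

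What the paper's route buys is a template that works for any second-countable group acting continuously. It needs no Haar probability to average against. Your convolution trick, by contrast, relies on compactness of $SO(2)$, which is harmless here because the conclusion is Haar anyway.
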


\begin{proof}
For every deterministic planar rotation \(R_\delta\), right invariance of
Haar measure on \(O(p)\) gives
\[
 [\,U\;V\,]R_\delta\overset{d}{=}[\,U\;V\,].
\]
Right rotation leaves the oriented plane fixed and shifts the in-plane angle
by \(\delta\).  Disintegrate the frame law over the oriented Grassmannian and
first take \(\delta\) in a countable dense subgroup of \(SO(2)\).  Outside
one common null set of planes, every corresponding conditional law is
invariant under all those shifts.  Weak continuity of translation then
extends the invariance to every \(\delta\in SO(2)\).  Uniqueness of Haar
probability on \(SO(2)\) makes the conditional law uniform.
\end{proof}

For an equivalent Gaussian construction of the same invariance, take a
\(p\times2\) matrix \(G\) of independent standard normals and set
\(Q=G(G^\top G)^{-1/2}\).  Then \(Q=[\,U\;V\,]\), and isotropy gives
\(GR_\delta\overset{d}{=}G\).  The equivariance
\(Q(GR_\delta)=Q(G)R_\delta\) proves the same right-rotation invariance.

\subsection{The Cauchy tangent projection}

Decompose the logarithmic gradient into radial and tangent components:
\begin{equation}
  g_{a,w}(U)=U+T(U),
  \qquad
  T(U):=g_{a,w}(U)-U\in U^\perp.
  \label{eq:tangent-decomposition}
\end{equation}
The equality of the radial component to \(U\) follows from
\cref{eq:euler-pairing}.  Define the scalar random tangent projection
\begin{equation}
  Z:=T(U)^\top V.
  \label{eq:random-projection}
\end{equation}
Since \(U^\top V=0\),
\begin{equation}
  Z
  =g_{a,w}(U)^\top V
  =\sum_{j=1}^k w_j
    \frac{a_j^\top V}{a_j^\top U}.
  \label{eq:Z-ratio-sum}
\end{equation}

Condition on the oriented plane \(P\), not on the full pair \((U,V)\).  Fix
an oriented orthonormal basis \((e_1,e_2)\) of \(P\).  By
\cref{lem:uniform-frame-phase}, for one common uniform angle \(\phi\),
\begin{equation}
  U=e_1\cos\phi+e_2\sin\phi,
  \qquad
  V=-e_1\sin\phi+e_2\cos\phi.
  \label{eq:frame-parametrization}
\end{equation}
For almost every \(P\), the projection of every active \(a_j\) onto \(P\) is
nonzero.  Write it as
\[
  \operatorname{proj}_P(a_j)
  =r_j(e_1\cos\alpha_j+e_2\sin\alpha_j),
  \qquad r_j>0.
\]
Then, simultaneously for all \(j\),
\begin{equation}
  \frac{a_j^\top V}{a_j^\top U}
  =-\tan(\phi-\alpha_j).
  \label{eq:common-phase-ratios}
\end{equation}
The same \(\phi\) occurs in every term; the arrangement only determines the
fixed shifts.  The uniform angle modulo \(2\pi\) is uniform modulo \(\pi\),
and the minus sign in \cref{eq:common-phase-ratios} is immaterial by symmetry
of the Cauchy law.  Applying \cref{eq:weighted-tangent-preview} conditionally
on \(P\) gives
\begin{equation}
  \Law(Z\mid P)=\Cauchy(0,1)
  \quad\text{for almost every }P,
  \qquad
  Z\sim\Cauchy(0,1).
  \label{eq:Z-cauchy}
\end{equation}
No comparison of angular coordinates across different random planes is
needed: the conditional law is the same for almost every plane.

\section{The Beta--Cauchy equivalence and the Pillai--Meng route}
\label{sec:finite-law-deductions}

The planar analysis above has two roles: it supplies the Cauchy projection in
the two-frame proof, and the same weighted-tangent mechanism appears in the
original Pillai--Meng proof.  We first close the two-frame argument with the
exact Beta--Cauchy tangent-projection equivalence and then record the
alternative Gaussian derivation through the Pillai--Meng theorem.

\subsection{The Beta--Cauchy tangent-projection equivalence}
\label{sec:tangent-projection-converse}

The following equivalence does not require an arrangement structure; it
holds for every measurable tangent field.

\begin{theorem}[Beta--Cauchy tangent-projection equivalence]
\label{thm:tangent-projection-equivalence}
Let \(p\geq2\), let \(U\sim\Unif(\sphere^{p-1})\), and let
\(T:\sphere^{p-1}\to\R^p\) be a measurable vector field satisfying
\[
  u^\top T(u)=0
\]
for Haar-almost every \(u\).  Conditional on \(U\), let \(V\) be
Haar-uniform on the unit sphere in \(U^\perp\), and define
\begin{equation}
  R:=\|T(U)\|,
  \qquad
  Z_T:=T(U)^\top V,
  \qquad
  H_T:=\frac{1}{1+R^2}.
  \label{eq:general-tangent-variables}
\end{equation}
Then
\begin{equation}
  H_T\sim\Beta\!\left(\frac12,\frac{p-1}{2}\right)
  \quad\Longleftrightarrow\quad
  Z_T\sim\Cauchy(0,1).
  \label{eq:beta-cauchy-equivalence}
\end{equation}
Equivalently, the universal Beta law characterizes the distribution of the
tangent norm through one conditionally Haar tangent projection.
\end{theorem}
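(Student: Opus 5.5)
The plan is to reduce the equivalence to a statement about a scale mixture. Conditional on \(U\), the vector \(V\) is uniform on the unit sphere of the \((p-1)\)-dimensional space \(U^\perp\), and \(T(U)\in U^\perp\) is fixed. Rotational invariance therefore gives
\[
  Z_T \overset{d}{=} R\,X, \qquad X \text{ independent of } R,
\]
where \(X\) is the first coordinate of a uniform point on \(\sphere^{p-2}\). We use \(X^2\sim\Beta(\tfrac12,\tfrac{p-2}{2})\) with a symmetric sign for \(p\ge3\), and \(X=\pm1\) with equal probability for \(p=2\). The first step is to make this independence rigorous: disintegrate over \(U\), and note that the conditional law of \(T(U)^\top V\) depends only on \(\|T(U)\|\).

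The second step computes Mellin transforms. Since \(Z_T\) and \(RX\) are symmetric, \(Z_T\) is determined by the law of \(|Z_T|\). That law is determined by the Mellin transform \(\E|Z_T|^{s}\) on a strip \(-1<\operatorname{Re}s<1\), where all quantities below are finite. By independence,
\[
  \E|Z_T|^s=\E R^s\,\E|X|^s .
\]
The standard Cauchy law has \(\E|C|^s=1/\cos(\pi s/2)=\Gamma(\tfrac{1+s}2)\Gamma(\tfrac{1-s}2)/\pi\). For \(X\) we have \(\E|X|^s=\frac{\Gamma(\frac{1+s}{2})\Gamma(\frac{p-1}{2})}{\Gamma(\frac12)\Gamma(\frac{p-1+s}{2})}\). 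The Beta side reads as follows: \(H\sim\Beta(\tfrac12,\tfrac{p-1}2)\) if and only if \(R^2=(1-H)/H\) is \(\BetaPrime(\tfrac{p-1}2,\tfrac12)\). In that case
\[
  \E R^s=\frac{\Gamma(\frac{p-1+s}{2})\Gamma(\frac{1-s}{2})}{\Gamma(\frac{p-1}{2})\Gamma(\frac12)} .
\]
Multiplying the two Mellin transforms cancels \(\Gamma(\tfrac{p-1+s}2)\) and \(\Gamma(\tfrac{p-1}2)\), which gives exactly the Cauchy transform. This proves the forward direction.

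For the converse, assume \(Z_T\) is standard Cauchy. Then \(\E R^s\E|X|^s=1/\cos(\pi s/2)\) on the strip \(0\le\operatorname{Re}s<1\), where both factors are finite because \(\E|Z_T|^s<\infty\) forces \(\E R^s<\infty\). Since \(\E|X|^s\) is nonvanishing there, we can solve for \(\E R^s\), and it coincides with the beta-prime Mellin transform on a vertical line, say \(\operatorname{Re}s=0\). Mellin transforms on a line are Fourier transforms of \(\log R\), so this determines the law of \(R\) on \((0,\infty)\). The mass at \(R=0\) needs separate treatment: \(Z_T\) has no atom at \(0\), so \(\Prb(R=0)=0\). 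Uniqueness then gives the beta-prime law, and hence the Beta law of \(H_T\).

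The main obstacle is the careful handling of this deconvolution step. We must check that the factorization is legitimate on an open strip, that the nonvanishing of \(\E|X|^{it}=\Gamma(\frac{1+it}{2})\Gamma(\frac{p-1}{2})/(\Gamma(\frac12)\Gamma(\frac{p-1+it}{2}))\) holds (true, since \(\Gamma\) has no zeros), and that the possible atom of \(R\) at \(0\) is excluded. For \(p=2\) the formula specializes consistently: \(\E|X|^s=1\), and the statement is simply that \(R\) itself is half-Cauchy. In that case the equivalence is immediate from symmetry.
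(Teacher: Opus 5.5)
Your proposal is correct and follows essentially the same route as the paper: the conditional factorization $Z_T\overset{d}{=}RX$ with $X$ independent of $R$, and the product of the Beta-prime and spherical-coordinate Mellin transforms giving $1/\cos(\pi s/2)$. The converse likewise proceeds by division once an atom of $R$ at zero is ruled out, and your only cosmetic departure is identifying laws through the characteristic function of $\log R$ on $\operatorname{Re}s=0$ rather than, as the paper does, its moment-generating function on the real interval $(-1,1)$; both uniqueness arguments are valid.
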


\begin{proof}
On either side of \cref{eq:beta-cauchy-equivalence}, \(R>0\) almost surely.
For the Beta law this follows because the distribution has no mass at one.
For the Cauchy law it follows from
\(\{R=0\}\subseteq\{Z_T=0\}\).

On the event \(R>0\), define \(Q:=Z_T/R\).  Given \(U\), isotropy of \(V\)
then gives the almost-sure factorization
\begin{equation}
  Z_T=RQ,
  \label{eq:general-RQ-factorization}
\end{equation}
where \(Q\) has the law of the first coordinate of a Haar point on
\(\sphere^{p-2}\).  Its conditional law does not depend on \(U\), so \(Q\)
is independent of \(R\).  When \(p=2\), \(Q\) is Rademacher.  For every
\(p\geq2\),
\begin{equation}
  \E|Q|^s
  =
  \frac{
    \Gamma((s+1)/2)\Gamma((p-1)/2)
  }{
    \sqrt\pi\,\Gamma((p-1+s)/2)
  },
  \qquad s>-1.
  \label{eq:general-Q-mellin}
\end{equation}

Suppose first that \(H_T\) has the asserted Beta law.  The transformation in
\cref{eq:general-tangent-variables} gives
\[
  R^2\sim\BetaPrime\!\left(\frac{p-1}{2},\frac12\right)
\]
and hence
\begin{equation}
  \E R^s
  =
  \frac{
    \Gamma((p-1+s)/2)\Gamma((1-s)/2)
  }{
    \Gamma((p-1)/2)\sqrt\pi
  },
  \qquad -(p-1)<s<1.
  \label{eq:general-R-mellin}
\end{equation}
Combining
\cref{eq:general-RQ-factorization,eq:general-Q-mellin,eq:general-R-mellin}
gives
\[
  \E|Z_T|^s
  =
  \frac{
    \Gamma((1-s)/2)\Gamma((1+s)/2)
  }{\pi}
  =
  \frac{1}{\cos(\pi s/2)},
  \qquad -1<s<1.
\]
This is the Mellin transform of the absolute value of a standard Cauchy
variable.  It identifies the law because it is the moment-generating
function of \(\log|Z_T|\) on an open interval containing zero.  The
conditional distribution of \(Z_T\) given \(U\) is symmetric, so \(Z_T\)
itself is standard Cauchy.

Conversely, suppose \(Z_T\sim\Cauchy(0,1)\).  Tonelli's theorem and the
independence in \cref{eq:general-RQ-factorization} give, for
\(-1<s<1\),
\[
  \E R^s
  =
  \frac{\E|Z_T|^s}{\E|Q|^s}.
\]
Using the Cauchy Mellin transform and \cref{eq:general-Q-mellin} recovers
\cref{eq:general-R-mellin}.  The moment-generating function of \(\log R\)
therefore identifies
\[
  R^2\sim\BetaPrime\!\left(\frac{p-1}{2},\frac12\right).
\]
The transformation \(H_T=(1+R^2)^{-1}\) proves the required Beta law.
\end{proof}

For the arrangement field in \cref{eq:tangent-decomposition},
\cref{eq:Z-cauchy,thm:tangent-projection-equivalence} now give
\[
  R^2\sim\BetaPrime\!\left(\frac{p-1}{2},\frac12\right),
  \qquad
  H_{a,w}(U)\sim\Beta\!\left(\frac12,\frac{p-1}{2}\right),
\]
which completes the planar proof of \cref{thm:universal-beta}.

\subsection{The Pillai--Meng Gaussian route}
\label{sec:pm-input}

A second route passes through the positive-semidefinite Gaussian ratio
identity of Pillai and Meng \cite{pillai-meng-2016}.  Their proof also invokes
the weighted shifted-tangent identity in \cref{eq:weighted-tangent-preview}:
an angular change of variables extracts one common uniform phase conditional
on the remaining phase differences.  Here we instead take their theorem as
the input and derive the incidence law from it.

Let \(X,Y\in\R^m\) be independent with
\[
  X,Y\sim N_m(0,\Sigma),
\]
where \(\Sigma\) is positive semidefinite and
\(\Sigma_{jj}>0\) for every \(j\).  For deterministic simplex weights,
Pillai and Meng proved
\begin{equation}
  \sum_{j=1}^m w_j\frac{X_j}{Y_j}
  \sim\Cauchy(0,1).
  \label{eq:pm-cauchy}
\end{equation}
Conditioning the left side on \(Y\) gives a centered normal variable with
random variance
\begin{equation}
  L_{\Sigma,w}(Y)
  :=
  \left(\frac{w}{Y}\right)^{\!\top}
  \Sigma
  \left(\frac{w}{Y}\right),
  \qquad
  \frac{w}{Y}:=\left(\frac{w_j}{Y_j}\right)_{j=1}^m.
  \label{eq:levy-functional}
\end{equation}
Uniqueness of centered Gaussian variance mixtures yields
\begin{equation}
  L_{\Sigma,w}(Y)^{-1}\sim\chi_1^2.
  \label{eq:pm-levy}
\end{equation}

Apply this result with \(m=k\).  Let \(A=[a_1,\ldots,a_k]\), put
\(\Sigma=A^\top A\), and write a standard Gaussian vector as \(Z=RU\), where
\(R^2\sim\chi_p^2\) is independent of \(U\).  With
\(Y=A^\top Z=RA^\top U\),
\[
  L_{A^\top A,w}(A^\top Z)
  =
  \frac{\|g_{a,w}(U)\|^2}{R^2}.
\]
Thus \cref{eq:pm-levy} gives
\(R^2/\|g_{a,w}(U)\|^2\sim\chi_1^2\).  Beta--gamma Mellin
deconvolution, using the independence of \(R\) and \(U\), proves
\cref{eq:universal-beta}.  Singular Gram matrices are allowed, so repeated,
dependent, and overcomplete arrangements are included.

The Gaussian route therefore locates the finite marginal inside the
Pillai--Meng Cauchy--L\'evy identity.  The planar route additionally retains
the oriented boundary map and the conditional plane-wise phase used in the
converse and infinite-arrangement results below.

\section{Three levels of rigidity and finite scope}
\label{sec:sharp-converses}

The universal theorem is a forward implication: positive reciprocal
coefficients produce the Beta incidence marginal.  This section separates
three possible reversals.  The marginal itself determines only the tangent
norm and admits non-gradient counterexamples.  Requiring the Cauchy law on
almost every oriented two-plane recovers positivity for a fixed reciprocal
arrangement and determines the magnitude in a nonlinear zonal class, although
orientation remains ambiguous.  Requiring universality over every finite
arrangement yields the sharp coefficient-level converse: the simplex is the
maximal normalized real-weight class.

\subsection{Degenerate and boundary cases}
\label{sec:edge-cases}

The proof records the following edge cases explicitly.

\begin{itemize}
\item The theorem is stated for \(p\geq2\).  For \(p=1\), there is no random
  tangent direction and the incidence square is identically one; the displayed
  Beta parameter would be zero.
\item When \(p=2\), the tangent unit sphere is \(\sphere^0\), so \(Q\) is a
  sign and \(R=|Z|\).  Thus
  \(R^2\sim\BetaPrime(1/2,1/2)\), and the equivalence remains valid with
  this Rademacher projection factor.
\item Zero weights are removed before defining the exceptional set.  They
  contribute neither a logarithm nor a ratio.
\item Every active \(a_j\) must be nonzero.  Then
  \(\Prb(a_j^\top U=0)=0\).  For a finite arrangement, their union remains
  null.
\item A fixed nonzero \(a_j\) has zero projection onto a Haar-random
  two-plane only on a Grassmannian-null event.  Thus the planar polar
  coordinates used in \cref{eq:common-phase-ratios} are valid almost surely.
\item Linear dependence, repeated hyperplanes, collinearity, and failure to
  span \(\R^p\) cause no difficulty.  The proof never inverts the arrangement
  matrix.
\end{itemize}

\subsection{The Beta marginal does not characterize logarithmic gradients}

The converse in \cref{thm:tangent-projection-equivalence} concerns only the
tangent-norm distribution.  It does not recover a logarithmic-gradient,
arrangement, or plane-conditional common-phase structure.

\begin{example}[A non-gradient swirl with the universal Beta marginal]
\label{ex:swirl-counterexample}
Let \(p=3\), write \(u=(x,y,z)\in\sphere^2\), and let \(e_1=(1,0,0)\).
Away from the great circle \(\{x=0\}\), define
\begin{equation}
  T_{\mathrm{sw}}(u)
  :=
  \frac{e_1\times u}{|e_1^\top u|}
  =
  \frac{(0,-z,y)}{|x|},
  \qquad
  G_{\mathrm{sw}}(u):=u+T_{\mathrm{sw}}(u).
  \label{eq:swirl-field}
\end{equation}
Extend \(T_{\mathrm{sw}}\) by zero on \(\{x=0\}\); this makes it a
measurable tangent field on all of \(\sphere^2\) without changing any
distribution below.
The field is tangent because \(u^\top T_{\mathrm{sw}}(u)=0\), and
\[
  \|T_{\mathrm{sw}}(u)\|^2
  =\frac{1-x^2}{x^2}.
\]
Consequently, if
\(N_{\mathrm{sw}}=G_{\mathrm{sw}}/\|G_{\mathrm{sw}}\|\), then
\begin{equation}
  \{u^\top N_{\mathrm{sw}}(u)\}^2
  =
  \frac{1}{1+\|T_{\mathrm{sw}}(u)\|^2}
  =x^2.
  \label{eq:swirl-incidence}
\end{equation}
For \(U\sim\Unif(\sphere^2)\), this is exactly
\(\Beta(1/2,1)\), the universal incidence law in dimension three.

Nevertheless, \(T_{\mathrm{sw}}\) is not even locally a spherical gradient.
On the hemisphere \(x>0\), use the chart
\[
  (y,z)\longmapsto
  \bigl(\sqrt{1-y^2-z^2},y,z\bigr)
\]
and put \(x=\sqrt{1-y^2-z^2}\).  The metric-dual one-form of
\(T_{\mathrm{sw}}\) is
\begin{equation}
  \omega_{\mathrm{sw}}
  =
  -\frac{z}{x}\,\dd y+\frac{y}{x}\,\dd z.
  \label{eq:swirl-one-form}
\end{equation}
A direct calculation gives
\begin{equation}
  \dd\omega_{\mathrm{sw}}
  =
  \frac{1+x^2}{x^3}\,\dd y\wedge\dd z,
  \label{eq:swirl-not-closed}
\end{equation}
which is nowhere zero on this hemisphere.  Thus
\(\omega_{\mathrm{sw}}\) is not locally exact.  In particular,
\(G_{\mathrm{sw}}\) cannot be the normalized logarithmic gradient of a
differentiable homogeneous function, and it cannot be an arrangement field
of the form \cref{eq:log-gradient}.  Indeed, the tangential part of an
ambient gradient restricted to the sphere is the intrinsic spherical
gradient of the restricted potential, whose metric-dual one-form is locally
exact.  Here a hypothetical primitive would automatically be smooth because
its differential is the smooth form \(\omega_{\mathrm{sw}}\), contradicting
\cref{eq:swirl-not-closed}.

The plane-conditional common-phase property also fails.  Let \((U,V)\) be a
Haar oriented two-frame and let \(n=U\times V\), so \(V=n\times U\).
Conditional on the oriented plane, \(n\) is fixed and
\[
  T_{\mathrm{sw}}(U)^\top V
  =
  \frac{(e_1\times U)^\top(n\times U)}
       {|e_1^\top U|}
  =
  \frac{e_1^\top n}{|e_1^\top U|}.
\]
Put \(c=e_1^\top n\) and \(r=\sqrt{1-c^2}\).  Under the conditional uniform
phase, \(e_1^\top U=r\cos\Phi\), and hence
\begin{equation}
  T_{\mathrm{sw}}(U)^\top V
  =
  \frac{c}{r|\cos\Phi|}.
  \label{eq:swirl-conditional-projection}
\end{equation}
For almost every oriented plane, \(0<|c|<1\).  The conditional variable in
\cref{eq:swirl-conditional-projection} has a fixed sign and absolute value
at least \(|c|/r\), so it is not standard Cauchy.  Its unconditional law is
nevertheless standard Cauchy by
\cref{thm:tangent-projection-equivalence,eq:swirl-incidence}.
\end{example}

There is nonuniqueness even inside the homogeneous-gradient class.  If
\(q\) is differentiable, nonvanishing, and homogeneous of degree \(d>0\),
write
\[
  g_q(x):=\frac1d\nabla\log|q(x)|
\]
for its Euler-normalized logarithmic gradient, and
define its radial dual
\begin{equation}
  q^\#(x):=\frac{\|x\|^{2d}}{q(x)}.
  \label{eq:radial-dual}
\end{equation}
Then \(q^\#\) is again homogeneous of degree \(d\), and
\begin{equation}
  g_{q^\#}(x)
  =\frac{2x}{\|x\|^2}-g_q(x).
  \label{eq:radial-dual-gradient}
\end{equation}
Thus, on the unit sphere, the radial dual changes the tangent component
\(T\) to \(-T\) and leaves the incidence square unchanged pointwise.  A
scalar Beta marginal cannot distinguish this pair.

\subsection{Conditional rigidity for a fixed finite arrangement}

\begin{theorem}[Fixed-arrangement conditional rigidity]
\label{thm:fixed-arrangement-rigidity}
Let \(a_1,\ldots,a_k\in\R^p\setminus\{0\}\), let
\(c_1,\ldots,c_k\in\R\) satisfy \(\sum_jc_j=1\), and combine all indices
whose vectors determine the same projective direction, omitting any groups
whose combined coefficient is zero.  Write \(b_1,\ldots,b_m\) for the
remaining combined coefficients and set
\[
  T_{a,c}(u)
  :=
  \sum_{j=1}^k c_j\frac{a_j}{a_j^\top u}-u.
\]
Then
\begin{equation}
  \Law\{T_{a,c}(U)^\top V\mid P\}
  =\Cauchy(0,1)
  \quad\text{for Haar-almost every oriented plane \(P\)}
  \label{eq:fixed-arrangement-conditional}
\end{equation}
if and only if \(b_\ell>0\) for every \(\ell\).
\end{theorem}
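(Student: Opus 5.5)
The plan is to reduce everything to the one-plane setting of \cref{sec:two-frame-proof} and then invoke \cref{cor:weighted-tangent} for sufficiency and \cref{lem:signed-tangent-tail} for necessity.

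First, since \(U^\top V=0\), the radial term \(-u\) drops out of the projection, so
\[
  T_{a,c}(U)^\top V=\sum_{j=1}^k c_j\frac{a_j^\top V}{a_j^\top U}.
\]
Vectors with the same projective direction give identical ratios \(a_j^\top V/a_j^\top U\), because the ratio is invariant under \(a_j\mapsto\lambda a_j\) with \(\lambda\neq0\), including negative \(\lambda\). We can therefore group indices projectively. This rewrites the sum as \(\sum_{\ell=1}^m b_\ell\, a_\ell'^\top V/a_\ell'^\top U\) over pairwise non-parallel representatives \(a'_\ell\), and groups with zero combined coefficient simply disappear. The normalization \(\sum_\ell b_\ell=\sum_j c_j=1\) is preserved. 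If every group cancels, then \(m=0\), the sum is \(0\), and \(\sum_j c_j=1\) is impossible, so \(m\geq1\).

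Second, we condition on the oriented plane \(P\). We use the parametrization \cref{eq:frame-parametrization} with a uniform phase from \cref{lem:uniform-frame-phase}, and \cref{eq:common-phase-ratios} with planar angles \(\alpha_\ell\) of \(\operatorname{proj}_P(a'_\ell)\). This gives
\[
  T_{a,c}(U)^\top V=-\sum_{\ell=1}^m b_\ell\tan(\phi-\alpha_\ell)
\]
for almost every \(P\), namely whenever no \(a'_\ell\) is orthogonal to \(P\). The key geometric point is that for Haar-almost every plane \(P\) the angles \(\alpha_\ell\) are distinct modulo \(\pi\). Two projections \(\operatorname{proj}_P a'_\ell\) and \(\operatorname{proj}_P a'_{\ell'}\) are parallel precisely when some nontrivial combination \(s a'_\ell+t a'_{\ell'}\) lies in \(P^\perp\). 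Equivalently, \(P\) meets the two-dimensional span of the two vectors nontrivially in its orthogonal complement, i.e.\ \(\dim(P\cap\operatorname{span}\{a'_\ell,a'_{\ell'}\}^{\perp})\geq1\). For \(p\geq3\) this is a proper algebraic subvariety of the Grassmannian and hence null. For \(p=2\), \(P=\R^2\), so the projections are the vectors themselves, and these are non-parallel by construction. I expect this genericity step to be the only place needing care. I would handle it by noting that the bad set is the zero set of a nontrivial polynomial in Stiefel coordinates, namely the determinant \(\det\) of the \(2\times2\) matrix \([\,e_i^\top a'_\ell\,]\). To see that the polynomial is nontrivial, exhibit one plane, such as the span of \(a'_\ell,a'_{\ell'}\) themselves, where this determinant is nonzero.

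Third, we close both directions. If all \(b_\ell>0\), the \(b_\ell\) are simplex weights. \cref{cor:weighted-tangent}, together with the symmetry of the Cauchy law and the fact that a phase uniform modulo \(2\pi\) is uniform modulo \(\pi\), gives the conditional standard Cauchy law \cref{eq:fixed-arrangement-conditional} for almost every \(P\). Conversely, assume \cref{eq:fixed-arrangement-conditional} holds. Pick one plane \(P\) in the full-measure set where it holds, the projections are nonzero, and the \(\alpha_\ell\) are distinct. On this plane, \(F(\phi)=\sum_\ell(-b_\ell)\tan(\phi-\alpha_\ell)\) is standard Cauchy. By the symmetry of the Cauchy law, so is \(-F=\sum_\ell b_\ell\tan(\phi-\alpha_\ell)\), whose coefficients are nonzero and sum to one. \cref{lem:signed-tangent-tail} then forces every \(b_\ell>0\).
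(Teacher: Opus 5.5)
Your proposal is correct and follows essentially the same route as the paper: combine projectively parallel indices, show that the projected phases are distinct on almost every plane via the collision determinant $(a^\top U)(b^\top V)-(a^\top V)(b^\top U)$ (nonzero on the plane spanned by the two directions, with $p=2$ handled separately), then use \cref{cor:weighted-tangent} for sufficiency and \cref{lem:signed-tangent-tail} on one good plane for necessity. The one detail the paper states and you leave implicit is that the Stiefel manifold is connected for $p\geq3$; this is what lets nonvanishing at a single point imply that the zero set of that real-analytic function is Haar-null.
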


\begin{proof}
For a generic plane, all active projective directions have nonzero
projections and their projected phases
\(\alpha_1,\ldots,\alpha_m\) are distinct modulo \(\pi\).  Indeed, for two
nonparallel directions the collision condition is
\[
  (a^\top U)(b^\top V)-(a^\top V)(b^\top U)=0.
\]
For \(p\geq3\), this is the zero set of a nontrivial real-analytic function
on the connected Stiefel manifold and is therefore Haar-null.  Nontriviality
follows by taking the plane to contain the two directions.  For \(p=2\), the
same expression equals
\(\det[a\ b]\det[U\ V]\) and is nowhere zero.  A finite union of collision
sets remains null.

On each such plane, the common-phase calculation gives
\[
  -T_{a,c}(U_\phi)^\top V_\phi
  =
  \sum_{\ell=1}^m b_\ell\tan(\phi-\alpha_\ell).
\]
If \cref{eq:fixed-arrangement-conditional} holds,
\cref{lem:signed-tangent-tail} and \(\sum_\ell b_\ell=1\) force every
combined coefficient to be positive; the leading minus sign is harmless by
the symmetry of the Cauchy law.  Conversely, positive combined
coefficients are simplex weights, so \cref{cor:weighted-tangent} gives the
conditional Cauchy law.
\end{proof}

\begin{remark}
This is a fixed-arrangement statement: it does not require universality
over all possible choices of directions.  Individual signed coefficients
are not identifiable when their projective directions coincide, because
only their combined coefficient enters the field.
\end{remark}

\subsection{Zonal rigidity and its sign obstruction}

The plane-conditional law is stronger than the scalar Beta marginal, but it
still cannot recover an orientation because the Cauchy law is symmetric.
The next theorem gives a complete answer in a natural infinite-dimensional
zonal class.

\begin{theorem}[Zonal magnitude rigidity]
\label{thm:zonal-rigidity}
Let \(p\geq3\), fix \(a\in\sphere^{p-1}\), and let
\(\psi:(-1,0)\cup(0,1)\to\R\) be Borel and finite almost everywhere.
Define the measurable tangent field
\begin{equation}
  T_\psi(u)
  :=
  \psi(a^\top u)\{a-(a^\top u)u\}
  \label{eq:zonal-field}
\end{equation}
away from \(a^\perp\), with an arbitrary definition on that null set.
The following are equivalent.
\begin{enumerate}[label=\textup{(\roman*)}]
\item For Haar-almost every oriented two-plane \(P\),
  \[
    \Law\{T_\psi(U)^\top V\mid P\}=\Cauchy(0,1).
  \]
\item
  \[
    |\psi(t)|=\frac1{|t|}
    \quad\text{for Lebesgue-almost every }t\in(-1,1).
  \]
\end{enumerate}
If \(T_\psi\) is also antipodally odd, then necessarily
\begin{equation}
  T_\psi(u)
  =
  \varepsilon(|a^\top u|)
  \left\{\frac{a}{a^\top u}-u\right\},
  \qquad
  \varepsilon:(0,1)\to\{-1,1\}
  \label{eq:zonal-sign-family}
\end{equation}
for an arbitrary measurable sign function, and every such field satisfies
\textup{(i)}.  If \(\psi\) is continuous on each half-interval, the sign is
constant and only the reciprocal field and its radial dual remain.
\end{theorem}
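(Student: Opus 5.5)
Write \(t=a^\top U\).  The plan is to reduce (i) to a one-parameter family of identities in the projected length \(r\), and then to invert a fractional-integral (Abel) transform.  First I would compute the conditional projection.  Since \(U^\top V=0\), \eqref{eq:zonal-field} gives
\[
  T_\psi(U)^\top V=\psi(a^\top U)\,a^\top V .
\]
On a plane \(P\) with \(\operatorname{proj}_P a=r(e_1\cos\alpha+e_2\sin\alpha)\), the parametrization \eqref{eq:frame-parametrization} gives \(a^\top U=r\cos\theta\) and \(a^\top V=-r\sin\theta\), where \(\theta=\phi-\alpha\) is uniform.  Hence
\[
  Z_P=-\psi(r\cos\theta)\,r\sin\theta .
\]
For \(p\ge3\), the law of \(r\) over Haar planes has a positive density on \((0,1)\).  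So (i) is equivalent to the statement that \(Z_P\) is standard Cauchy for Lebesgue-a.e.\ \(r\in(0,1)\).

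The direction (ii)\(\Rightarrow\)(i) is immediate from this formula.  If \(|\psi(t)|=1/|t|\), then \(|Z_P|=|\tan\theta|\).  Moreover \(\operatorname{sgn}Z_P\) is \(\pm\operatorname{sgn}(\sin\theta)\) times a factor depending only on \(\cos\theta\).  The reflection \(\theta\mapsto-\theta\) preserves the uniform law and \(\cos\theta\), so \(Z_P\) is symmetric with \(|Z_P|\overset d=|\tan\theta|\), hence standard Cauchy.  The same reflection argument shows that every field \eqref{eq:zonal-sign-family} satisfies (i), for an arbitrary measurable \(\varepsilon\).

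For (i)\(\Rightarrow\)(ii), I would use Mellin transforms as in \cref{thm:tangent-projection-equivalence}.

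\textbf{Step 1: the Mellin identity.}  For \(-1<s<1\), Tonelli gives
\[
  \E|Z_P|^s=\E|\tan\theta|^s=\frac{1}{\cos(\pi s/2)}<\infty .
\]
Change variables \(t=r\cos\theta\), whose density on \((-r,r)\) is \(\pi^{-1}(r^2-t^2)^{-1/2}\).  Then put \(u=t^2\), \(\rho=r^2\), and fold the two signs of \(t\) together via
\[
  h_s(t):=|\psi(t)|^s+|\psi(-t)|^s,\qquad 0<t<1 .
\]
The identity becomes
\[
  \int_0^\rho h_s(\sqrt u)\,u^{-1/2}(\rho-u)^{(s-1)/2}\dd u
  =\int_0^\rho 2u^{-s/2}\,u^{-1/2}(\rho-u)^{(s-1)/2}\dd u
  \quad\text{for a.e. }\rho\in(0,1).
\]
The right side is what \(\psi(t)=1/t\) produces.

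\textbf{Step 2: inverting the Abel transform.}  Both sides are Riemann--Liouville integrals of order \((s+1)/2>0\) of nonnegative functions.  Finiteness for a.e.\ \(\rho\) makes these functions locally integrable on \([0,1)\).  I would apply the complementary fractional integral of order \(1-(s+1)/2\), which turns both sides into ordinary primitives.  Equal primitives then force
\[
  h_s(t)=2t^{-s}\quad\text{for a.e. }t .
\]
Equivalently, one can invoke the Titchmarsh convolution theorem.  I expect this inversion, with its measure-theoretic bookkeeping of a.e.\ \(\rho\) versus a.e.\ \(t\), to be the main technical step.  I would handle it by doing the inversion for \(s\) in a countable dense subset of \((-1,1)\) and intersecting the exceptional null sets.

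\textbf{Step 3: extracting both magnitudes.}  Fix \(t\) outside the common null set, and put
\[
  X=|\psi(t)|\,t,\qquad Y=|\psi(-t)|\,t .
\]
Then \(X^s+Y^s=2\) on a dense set of \(s\), hence for all \(s\in(-1,1)\) by continuity.  Differentiating twice at \(s=0\) gives \((\log X)^2+(\log Y)^2=0\).  So \(X=Y=1\), which is (ii).  The point is that the symmetrized function \(h_s\) for a whole range of \(s\) recovers both one-sided magnitudes, not just their sum.

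For the odd case, note that \(a-(a^\top u)u\) is unchanged under \(u\mapsto-u\), while \(a^\top u\) changes sign.  Thus oddness of \(T_\psi\) means \(\psi(-t)=-\psi(t)\).  Together with (ii), this gives \(\psi(t)=\varepsilon(|t|)/t\) with \(\varepsilon\) a measurable sign, which is exactly \eqref{eq:zonal-sign-family}.  If \(\psi\) is continuous on each half-interval, \(\varepsilon\) is a continuous \(\{\pm1\}\)-valued function on the connected set \((0,1)\), hence constant.  The choice \(\varepsilon\equiv1\) is the reciprocal field.  The choice \(\varepsilon\equiv-1\) gives \(-T\), which by \eqref{eq:radial-dual-gradient} is its radial dual.
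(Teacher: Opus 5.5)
Your proposal is correct and follows the paper's proof essentially step for step: the same reduction to \(Z_r(\theta)=-r\sin\theta\,\psi(r\cos\theta)\) for almost every radius, the same sign-folded Mellin identity read as a Riemann--Liouville integral of order \((1+s)/2\) and inverted by the complementary fractional integral, the same reflection argument for the converse, and the same oddness and continuity argument for the sign family. The only difference is how you separate the two one-sided magnitudes: the paper uses just \(s=1/4\) and \(s=1/2\) and solves \(x+y=2\), \(x^2+y^2=2\), while you use a dense set of \(s\in(-1,1)\) and differentiate \(X^s+Y^s\equiv2\) twice at \(s=0\); this works equally well, and your negative exponents also make \(X,Y>0\) automatic, which the continuity step needs.
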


\begin{proof}
For a nonexceptional oriented plane \(P\), choose a positive orthonormal
basis so that
\[
  \operatorname{proj}_P(a)=r e_1,
  \qquad 0<r<1.
\]
Here \(r<1\) almost surely because \(p\geq3\), and the aligned bases can be
chosen measurably on a countable Grassmannian atlas.  Conditional uniformity
of the phase from \cref{lem:uniform-frame-phase} is therefore preserved.
With
\[
  U_\theta=e_1\cos\theta+e_2\sin\theta,
  \qquad
  V_\theta=-e_1\sin\theta+e_2\cos\theta,
\]
where \(\theta\) is uniform modulo \(2\pi\), direct calculation gives
\begin{equation}
  Z_r(\theta)
  :=
  T_\psi(U_\theta)^\top V_\theta
  =
  -r\sin\theta\,\psi(r\cos\theta).
  \label{eq:zonal-projection}
\end{equation}
For a Haar plane,
\(r^2\sim\Beta(1,(p-2)/2)\); its density is strictly positive throughout
\((0,1)\).  The conditional law in \cref{eq:zonal-projection} depends only
on \(r\), so \textup{(i)} is equivalent to \(Z_r\) being standard Cauchy
for Lebesgue-almost every \(r\in(0,1)\).

Fix \(0<s<1\).  Since \(\cos\theta\) has the arcsine density, the absolute
\(s\)-moment at every such \(r\) is
\begin{equation}
  \E|Z_r|^s
  =
  \frac1\pi
  \int_{-r}^r
  (r^2-t^2)^{(s-1)/2}|\psi(t)|^s\,\dd t.
  \label{eq:zonal-moment-transform}
\end{equation}
For a standard Cauchy variable this moment equals
\(\sec(\pi s/2)\).  The same beta integral shows that replacing
\(|\psi(t)|^s\) in \cref{eq:zonal-moment-transform} by \(|t|^{-s}\)
also gives \(\sec(\pi s/2)\).

For \(t>0\), put
\[
  d_s(t)
  :=
  \frac{|\psi(t)|^s+|\psi(-t)|^s}{2}-t^{-s}.
\]
It follows that
\begin{equation}
  \int_0^r
  (r^2-t^2)^{\beta-1}d_s(t)\,\dd t=0
  \quad\text{for almost every }r,
  \qquad
  \beta:=\frac{1+s}{2}\in(1/2,1).
  \label{eq:zonal-abel-zero}
\end{equation}
The moment identity at any good radius \(r_0\) shows that
\[
  \int_0^{r_0}
  \bigl\{|\psi(t)|^s+|\psi(-t)|^s\bigr\}\,\dd t<\infty:
\]
indeed, the kernel in \cref{eq:zonal-moment-transform} is bounded below by
\(r_0^{s-1}\) on \((0,r_0)\).  Since \(t^{-s}\) is integrable at zero,
\(d_s\) is locally integrable.  In
\cref{eq:zonal-abel-zero}, substitute \(x=r^2\), \(y=t^2\), and
\[
  f_s(y):=\frac{d_s(\sqrt y)}{2\sqrt y}.
\]
The resulting equation is the vanishing of the
Riemann--Liouville fractional integral \(I_{0+}^{\beta}f_s\).  Applying
\(I_{0+}^{1-\beta}\) and using the semigroup identity gives the next
display.  The interchange is justified first for \(|f_s|\) by Tonelli and
then for \(f_s\) by Fubini; the inner integral is the beta integral.  Thus
\[
  \int_0^x f_s(y)\,\dd y=0
  \quad\text{for almost every }x.
\]
The left side is absolutely continuous, so it vanishes everywhere and
\(d_s=0\) almost everywhere.

Two exponents are needed to identify two unknown magnitudes.  Use
\(s=1/4\) and \(s=1/2\) on their common full-measure set.  If
\[
  A=t|\psi(t)|,\qquad C=t|\psi(-t)|,
\]
then \(A^{1/4}+C^{1/4}=2\) and
\(A^{1/2}+C^{1/2}=2\).  Writing \(x=A^{1/4}\) and \(y=C^{1/4}\), these
relations give \(x+y=2\) and \(x^2+y^2=2\), hence the double solution
\(x=y=1\).  This proves \textup{(ii)}.

Conversely, under \textup{(ii)},
\(|Z_r(\theta)|=|\tan\theta|\) almost everywhere.  The transformation
\(\theta\mapsto-\theta\) preserves \(\cos\theta\) and reverses the sign in
\cref{eq:zonal-projection}; hence \(Z_r\) is symmetric and has the
half-Cauchy absolute-value law.  It is therefore standard Cauchy.

Finally, \(T_\psi(-u)=-T_\psi(u)\) is equivalent to
\(\psi(-t)=-\psi(t)\).  Combining this with \textup{(ii)} gives
\cref{eq:zonal-sign-family}.  Continuity makes the
\(\{-1,1\}\)-valued function \(\varepsilon\) constant on the connected
interval \((0,1)\).
\end{proof}

\begin{corollary}[Hardy orientation selects the positive reciprocal field]
\label{cor:zonal-hardy-orientation}
Assume the equivalent conditions in \cref{thm:zonal-rigidity} and suppose
that \(T_\psi\) is antipodally odd.  Set, on each oriented plane,
\[
  F_P(\phi):=-T_\psi(U_\phi)^\top V_\phi,
  \qquad
  b_P(e^{2i\phi})
  :=
  \frac{1+iF_P(\phi)}{1-iF_P(\phi)}.
\]
If \(b_P\) is the boundary function of a member of \(H^\infty(\D)\) for
Haar-almost every \(P\), then
\[
  T_\psi(u)=\frac{a}{a^\top u}-u
  \quad\text{almost everywhere}.
\]
The converse also holds.
\end{corollary}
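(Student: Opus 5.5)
The plan is to compute the planar boundary function $b_P$ explicitly in the sign family \cref{eq:zonal-sign-family}, and then show by an $H^\infty$ uniqueness argument that an analytic boundary function forces the sign $\varepsilon$ to be $+1$. By \cref{thm:zonal-rigidity}, the equivalent conditions together with antipodal oddness give $\psi(t)=\varepsilon(|t|)/t$ for a measurable sign $\varepsilon:(0,1)\to\{-1,1\}$.

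\textbf{Step 1: the boundary function on one plane.} Fix a nonexceptional oriented plane $P$ and use the aligned positive basis from the proof of \cref{thm:zonal-rigidity}, so that $\operatorname{proj}_P(a)=re_1$ with $0<r<1$. Changing to another positive orthonormal basis of $P$ only translates the phase. This replaces $b_P(\zeta)$ by $b_P(\omega\zeta)$ for a unimodular constant $\omega$, and membership in $H^\infty$ boundary values is invariant under this rotation, so the hypothesis may be checked in the aligned basis. By \cref{eq:zonal-projection},
\[
  F_P(\phi)=r\sin\phi\,\psi(r\cos\phi)=\varepsilon(r|\cos\phi|)\tan\phi .
\]
Since $(1\pm i\tan\phi)/(1\mp i\tan\phi)=e^{\pm2i\phi}$, we get, with $\zeta=e^{2i\phi}$,
\[
  b_P(\zeta)=\zeta\ \text{ on } E_P,
  \qquad
  b_P(\zeta)=\overline\zeta\ \text{ on } \mathbb T\setminus E_P,
\]
where $E_P=\{\varepsilon(r|\cos\phi|)=1\}$. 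Note that $|\cos\phi|$ depends only on $\zeta$ up to the relevant symmetry, so $E_P$ is a well-defined subset of the circle.

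\textbf{Step 2 (the key step): $H^\infty$ uniqueness.} Suppose $B\in H^\infty(\D)$ has boundary function $b_P$. Consider
\[
  h(z):=\bigl(B(z)-z\bigr)\bigl(zB(z)-1\bigr)\in H^\infty .
\]
Its boundary values vanish almost everywhere. On $E_P$ the first factor vanishes. Off $E_P$ the second factor vanishes, because $\zeta\overline\zeta=1$. A bounded analytic function with zero boundary values on a set of positive measure is identically zero, so either $B\equiv z$ or $zB\equiv1$. The second alternative is impossible at $z=0$. Hence $B(z)=z$, and so $b_P=\zeta$ almost everywhere. Since $\zeta\neq\overline\zeta$ except at $\phi\in\{0,\pi/2\}$ modulo $\pi$, this forces $\varepsilon(r|\cos\phi|)=1$ for almost every $\phi$. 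Because $r|\cos\phi|$ has an absolutely continuous law with positive density on $(0,r)$, we conclude $\varepsilon=1$ almost everywhere on $(0,r)$.

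\textbf{Step 3: globalize over planes.} For a Haar plane, $r^2\sim\Beta(1,(p-2)/2)$ has positive density on $(0,1)$. The hypothesis holds for almost every $P$, so there are good planes with $r$ arbitrarily close to $1$. Taking a sequence $r_n\uparrow1$ gives $\varepsilon=1$ almost everywhere on $(0,1)$. By \cref{eq:zonal-sign-family} this yields $T_\psi(u)=a/(a^\top u)-u$ almost everywhere.

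\textbf{Converse.} If $\varepsilon\equiv1$, then $b_P(\zeta)=\zeta$ on every nonexceptional plane. This is the boundary function of $z\in H^\infty$ (an inner function fixing the origin, consistent with \cref{thm:analytic-clark-converse}).

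\textbf{Expected obstacle.} The main subtlety is Step 2, namely excluding a measurable patchwork of $\zeta$ and $\overline\zeta$ boundary values. The factorized function $h$ handles this cleanly. The remaining care is in measurable dependence on the plane and the basis-rotation invariance, and both are routine.
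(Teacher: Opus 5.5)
Your proposal is correct and follows the paper's proof. It computes $b_P=\zeta$ or $\overline\zeta$ according to $\varepsilon(r|\cos\phi|)$, uses Hardy boundary uniqueness to force $B(z)=z$, and then uses the positive-density pushforward of $r|\cos\phi|$ along good radii increasing to one. The one difference is that your product $(B(z)-z)(zB(z)-1)$ has zero boundary values everywhere and then uses that $H^\infty$ has no zero divisors; this packages into a single step the paper's two-case Privalov argument, which separates positive-measure $E_P$ from null $E_P$.
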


\begin{proof}
Use the basis in the proof of \cref{thm:zonal-rigidity} and put
\(z=e^{2i\theta}\).  Antipodal oddness makes \(F_P\) \(\pi\)-periodic, so
\(b_P\) is a well-defined function of \(z\).  The sign representation
\cref{eq:zonal-sign-family} gives
\[
  b_P(z)=
  \begin{cases}
    z,&\varepsilon(r|\cos\theta|)=1,\\
    z^{-1},&\varepsilon(r|\cos\theta|)=-1.
  \end{cases}
\]
If the first set has positive circle measure and \(B\in H^\infty\) has this
boundary function, Privalov uniqueness applied to \(B(z)-z\) gives
\(B(z)=z\); the second set must then be null.  If the first set is null,
\(zB(z)\) has boundary value one almost everywhere, which would force
\(zB(z)=1\), an impossibility at zero.  Thus for every Hardy-good radius
\(r\), \(\varepsilon(r|\cos\theta|)=1\) almost everywhere.  The pushforward
of uniform \(\theta\) under \(r|\cos\theta|\) has a strictly positive
density on \((0,r)\).  Taking good radii increasing to one proves
\(\varepsilon=1\) almost everywhere.  The converse has \(b_P(z)=z\), up to
a rotation of coordinates.  We use the standard boundary uniqueness
theorem for Hardy functions \cite{duren-1970}.
\end{proof}

\begin{remark}
The almost-every-plane hypothesis in \cref{thm:zonal-rigidity} cannot be
replaced by a positive-measure family of planes.  For example, the odd
function
\[
  \psi(t)=\frac{2t}{2t^2-1}
\]
does not have reciprocal magnitude.  Put
\(b=(1-r^2)/r^2\), \(\varphi=2\theta\), and
\[
  \gamma=\arctan\sqrt{\frac{1-b}{1+b}}.
\]
For every \(r\geq1/\sqrt2\), direct trigonometry gives
\[
  Z_r(\theta)
  =-\frac{\sin\varphi}{\cos\varphi-b}
  =-\frac12\tan\!\left(\theta-\gamma-\frac\pi2\right)
   -\frac12\tan\!\left(\theta+\gamma-\frac\pi2\right).
\]
Hence \(Z_r\) is standard Cauchy by \cref{cor:weighted-tangent} and symmetry
throughout the interval \([1/\sqrt2,1]\).  For \(r<1/\sqrt2\) it is bounded and cannot
be Cauchy.  Thus a positive range of projection radii is insufficient; the
small radii present under the \(p\geq3\) Haar-plane law anchor the Abel
inversion.  In dimension \(p=2\), where only \(r=1\) is seen, this example
also shows why the stated higher-dimensional rigidity fails.
\end{remark}

\subsection{The simplex is the maximal universal real-weight class}

\begin{theorem}[Sharp maximality of positive weights]
\label{thm:simplex-maximality}
Fix \(p\geq2\) and \(c=(c_1,\ldots,c_k)\in\R^k\) with
\(\sum_{j=1}^k c_j=1\).  The following are equivalent.
\begin{enumerate}[label=\textup{(\roman*)}]
\item \(c_j\geq0\) for every \(j\).
\item For every collection of nonzero vectors
  \(a_1,\ldots,a_k\in\R^p\), the field
  \[
    g_{a,c}(u):=\sum_{j=1}^k c_j\frac{a_j}{a_j^\top u}
  \]
  satisfies
  \begin{equation}
    \frac{1}{\|g_{a,c}(U)\|^2}
    \sim\Beta\!\left(\frac12,\frac{p-1}{2}\right).
    \label{eq:signed-universal-incidence}
  \end{equation}
\item For every \(\alpha_1,\ldots,\alpha_k\in\R\) and
  \(\Phi\sim\Unif(\R/(\pi\mathbb Z))\),
  \begin{equation}
    \sum_{j=1}^k c_j\tan(\Phi-\alpha_j)
    \sim\Cauchy(0,1).
    \label{eq:signed-universal-tangent}
  \end{equation}
\end{enumerate}
Thus, after the Euler normalization \(\sum_jc_j=1\), the probability simplex
is exactly the maximal real coefficient class for which either universality
holds over every arrangement.
\end{theorem}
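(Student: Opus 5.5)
I will prove the cycle (i)\(\Rightarrow\)(ii), (i)\(\Rightarrow\)(iii), (iii)\(\Rightarrow\)(i) and (ii)\(\Rightarrow\)(i).

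\emph{Forward implications.} If every \(c_j\geq0\), then (ii) is \cref{thm:universal-beta} with the zero weights discarded, and (iii) is \cref{cor:weighted-tangent}.

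\emph{From (iii) to (i).} Choose \(\alpha_1,\ldots,\alpha_k\) distinct modulo \(\pi\). Indices with \(c_j=0\) can be dropped without changing \(\sum_j c_j=1\). Then \cref{lem:signed-tangent-tail} applies directly: the tail constant \(2/\pi\) forces \(\sum_j|c_j|=1=\sum_jc_j\), so every \(c_j\geq0\).

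\emph{From (ii) to (i).} Here I reduce to the planar statement by taking a planar arrangement. Fix \(p\geq2\). Choose the \(a_j\) in \(\operatorname{span}\{e_1,e_2\}\) with pairwise distinct projective phases, say \(a_j=e_1\cos\alpha_j+e_2\sin\alpha_j\). Set \(T=g_{a,c}-U\). This is tangent by the Euler identity \(U^\top g_{a,c}(U)=\sum_jc_j=1\), which holds for signed \(c\) as well. Then \(1/\|g\|^2=1/(1+\|T\|^2)\), so \cref{thm:tangent-projection-equivalence} converts (ii) into \(Z=T(U)^\top V\sim\Cauchy(0,1)\) unconditionally. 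That theorem uses only measurability and tangency, not positivity.

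This is only an unconditional law, whereas \cref{lem:signed-tangent-tail} needs a conditional one, and extracting the sign information is the main obstacle. Conditioning on a Haar two-plane \(P\), the common-phase computation gives
\[
  Z\mid P=-\sum_jc_j\tan(\phi-\beta_j(P)),
\]
where \(\beta_j(P)\) are the projected phases. Each conditional law has tail constant \((2/\pi)\sum_j|c_j|\), by \cref{lem:signed-tangent-tail} applied on almost every \(P\), where the projected phases stay distinct as in the proof of \cref{thm:fixed-arrangement-rigidity}.

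I would then pass this tail constant to the mixture. Because \(x\,\Prb(|F|>x)\) is uniformly bounded in terms of the pole data, dominated convergence gives
\[
  \lim_{x\to\infty} x\,\Prb(|Z|>x)=\frac{2}{\pi}\sum_j|c_j|.
\]
The needed uniform bound is \(x\,m_\pi\{|F|>x\}\leq C\sum_j|c_j|\), with \(C\) independent of the phases. I expect this is the weak-type \((1,1)\) estimate for conjugate functions, or more elementarily follows from a union bound on each term \(|c_j\tan(\cdot)|>x/k\).

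Comparing with the Cauchy tail constant gives \(\sum_j|c_j|=1\), hence every \(c_j\geq0\). If the dominated-convergence step proves delicate, I would take \(p=2\) as a fallback. Then the plane is all of \(\R^2\), and \(Z\) is exactly a signed tangent sum in one uniform phase, so \cref{lem:signed-tangent-tail} applies immediately. However, \(p\) is fixed in the statement, so the general mixture argument is the one to carry out.
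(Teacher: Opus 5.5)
Your proof is correct, but both converse implications take a different route from the paper's. The paper uses deliberately degenerate arrangements and elementary inequalities. For (ii)$\Rightarrow$(i) it takes $a_r=e_1$ and $a_j=e_2$ for all $j\neq r$; Cauchy--Schwarz then gives the deterministic bound $\|g_{a,c}(U)\|^{-2}\leq(1-2c_r)^{-2}<1$, contradicting the full support of the Beta law. For (iii)$\Rightarrow$(i) it puts the negative coefficients at phase $\pi/2$ and the rest at $0$. The sum becomes $AX+B/X$ with $X=\tan\Phi$, which never enters $(-2\sqrt{AB},2\sqrt{AB})$. These proofs are self-contained and exploit precisely the coincident directions that \cref{lem:signed-tangent-tail} cannot resolve.

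Your (iii)$\Rightarrow$(i) uses distinct phases and that lemma instead, which is just as short once the lemma is available. Your (ii)$\Rightarrow$(i) is much heavier, but every step holds:
\begin{itemize}
\item Euler tangency survives signed coefficients.
\item \cref{thm:tangent-projection-equivalence} needs no positivity.
\item Almost every plane has distinct nonzero projected phases.
\item The uniform bound is exactly your union bound, $x\,m_\pi\{|F|>x\}\leq\sum_j x\,m_\pi\{|c_j\tan(\cdot-\beta_j)|>x/k\}\leq(2k/\pi)\sum_j|c_j|$, independent of the phases. Bounded convergence therefore transfers the per-plane tail constant $(2/\pi)\sum_j|c_j|$ to $Z$.
\end{itemize}

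What your route buys is generality. It uses nothing about the planar choice beyond pairwise non-parallel directions. So for any single fixed arrangement, after merging parallel directions into combined coefficients $b_\ell$, it shows that the unconditional Beta marginal already forces every $b_\ell>0$: every good plane sees the same tail constant $(2/\pi)\sum_\ell|b_\ell|$. That strengthens \cref{thm:fixed-arrangement-rigidity}, whose hypothesis is the Cauchy law on almost every plane. It is consistent with \cref{ex:swirl-counterexample}, which lies outside the reciprocal class.
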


\begin{proof}
When \(k=1\), the normalization forces \(c_1=1\), so all three assertions
are immediate.  Hence suppose \(k\geq2\).
If \textup{(i)} holds, \textup{(ii)} is
\cref{thm:universal-beta}, and \textup{(iii)} is
\cref{eq:weighted-tangent-preview}.

Suppose that \textup{(ii)} holds but \(c_r<0\) for some \(r\).
Choose the repeated two-direction arrangement
\[
  a_r=e_1,
  \qquad
  a_j=e_2\quad(j\neq r),
\]
where \(e_1,e_2\) are the first two coordinate vectors in \(\R^p\).
Away from its null set of poles,
\[
  g_{a,c}(U)
  =
  \frac{c_r}{U_1}e_1
  +
  \frac{1-c_r}{U_2}e_2.
\]
Cauchy--Schwarz gives
\begin{align*}
  \|g_{a,c}(U)\|^2
  &=
  \frac{c_r^2}{U_1^2}
  +
  \frac{(1-c_r)^2}{U_2^2}\\
  &\geq
  \frac{\{|c_r|+|1-c_r|\}^2}{U_1^2+U_2^2}
  \geq
  \{|c_r|+|1-c_r|\}^2.
\end{align*}
The final inequality uses \(U_1^2+U_2^2\leq1\).
Because \(c_r<0\),
\[
  |c_r|+|1-c_r|=1-2c_r>1.
\]
Hence the variable in \cref{eq:signed-universal-incidence} is bounded above
by \((1-2c_r)^{-2}<1\).  This contradicts the strictly positive density of
the target Beta law throughout \((0,1)\).  Therefore \textup{(ii)} implies
\textup{(i)}.

Finally, suppose that \textup{(iii)} holds and that at least one coefficient
is negative.  Put
\[
  B:=-\sum_{j:c_j<0}c_j>0,
  \qquad
  A:=\sum_{j:c_j>0}c_j=1+B.
\]
Choose
\[
  \alpha_j=0\quad\text{when }c_j\geq0,
  \qquad
  \alpha_j=\frac{\pi}{2}\quad\text{when }c_j<0.
\]
The choice for a zero coefficient is immaterial.
With \(X=\tan\Phi\), the left side of
\cref{eq:signed-universal-tangent} becomes
\[
  AX+\frac{B}{X}.
\]
For \(X>0\) it is at least \(2\sqrt{AB}\), while for \(X<0\) it is at most
\(-2\sqrt{AB}\).  Its distribution therefore assigns no mass to
\((-2\sqrt{AB},2\sqrt{AB})\), which is impossible for a standard Cauchy
law.  Thus \textup{(iii)} also implies \textup{(i)}.
\end{proof}

\begin{remark}
Without normalizing the coefficients, let \(b_j\in\R\) and
\(d=\sum_jb_j\neq0\).  The corresponding normalized incidence variable is
\[
  \frac{d^2}{
    \left\|\sum_jb_j a_j/(a_j^\top U)\right\|^2}.
\]
It has the universal Beta law for every arrangement if and only if
\(b_j/d\geq0\) for every \(j\).  Thus all nonzero coefficients must have the
same sign as their total; normalization reduces this unnormalized same-sign
class to the simplex in \cref{thm:simplex-maximality}.
\end{remark}

\section{Probability-measure arrangements}
\label{sec:measure-arrangements}

The finite theorem concerns atomic probability measures on the space of
directions.  This section removes atomicity.  Empirical approximation extends
the Beta law to arbitrary Borel probability measures provided the reciprocal
field is absolutely integrable.  The same hypothesis also recovers the
plane-conditional Cauchy mechanism.  For signed measures, the conditional law
determines an exact phase-cancellation deficit; positivity follows whenever
the phase views norm total variation.  This condition is strictly weaker than
injectivity, and a cross-sign product-dimension bound is one checkable
sufficient criterion.

\begin{theorem}[Borel probability-measure incidence law]
\label{thm:measure-beta}
Let \(p\geq2\), let \(\mu\) be a Borel probability measure on
\(\sphere^{p-1}\), and suppose that
\begin{equation}
  I_\mu(U)
  :=
  \int_{\sphere^{p-1}}
  \frac{1}{|a^\top U|}
  \,\mu(\dd a)
  <\infty
  \qquad\text{almost surely}
  \label{eq:measure-reciprocal-integrability}
\end{equation}
for \(U\sim\Unif(\sphere^{p-1})\).  At every \(u\) satisfying the
integrability condition in \cref{eq:measure-reciprocal-integrability}, define
the absolutely convergent Bochner integral
\begin{equation}
  g_\mu(u)
  :=
  \int_{\sphere^{p-1}}
  \frac{a}{a^\top u}
  \,\mu(\dd a),
  \qquad
  N_\mu(u):=\frac{g_\mu(u)}{\|g_\mu(u)\|},
  \qquad
  H_\mu(u):=\frac{1}{\|g_\mu(u)\|^2}.
  \label{eq:measure-reciprocal-field}
\end{equation}
Then \(u^\top g_\mu(u)=1\) wherever the field is defined, and
\begin{equation}
  H_\mu(U)
  =
  \{U^\top N_\mu(U)\}^2
  \sim
  \Beta\!\left(\frac12,\frac{p-1}{2}\right).
  \label{eq:measure-beta}
\end{equation}
\end{theorem}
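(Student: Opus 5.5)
The plan is to rerun the planar proof of \cref{thm:universal-beta} with the atomic measure replaced by \(\mu\), rather than approximating \(\mu\) by empirical measures. The finite-sum steps become Bochner integrals. The weighted tangent identity is replaced by \cref{thm:planar-cauchy}, whose secant hypothesis \cref{eq:secant-integrability} will be checked using \cref{eq:measure-reciprocal-integrability}.

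\textbf{Step 1: the field and the Euler pairing.} Wherever \(I_\mu(u)<\infty\), the integrand \(a/(a^\top u)\) has norm \(1/|a^\top u|\), so \(g_\mu(u)\) is an absolutely convergent Bochner integral. Pairing with \(u\) under the integral gives
\[
  u^\top g_\mu(u)=\int\mu(\dd a)=1,
\]
so \(N_\mu\) is well defined and \(H_\mu(u)=\{u^\top N_\mu(u)\}^2\), exactly as in \cref{eq:angle-is-H}. Measurability of \(u\mapsto g_\mu(u)\) on the integrability set follows from Fubini applied to the jointly measurable integrand. Set \(T(u):=g_\mu(u)-u\in u^\perp\). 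Take a Haar two-frame \((U,V)\) and put \(Z:=T(U)^\top V\). Since \(U^\top V=0\),
\[
  Z=\int \frac{a^\top V}{a^\top U}\,\mu(\dd a).
\]

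\textbf{Step 2: conditioning on the oriented plane.} Condition on the oriented plane \(P\) and use \cref{lem:uniform-frame-phase} with the parametrization \cref{eq:frame-parametrization}. Two facts hold for almost every \(P\).
\begin{itemize}
\item[(a)] \(\mu\{a:\operatorname{proj}_P a=0\}=0\). For each fixed \(a\) the set of such planes is Grassmannian-null, so this follows from Tonelli on \(\mu\otimes\)(plane law).
\item[(b)] \(I_\mu(U_\phi)<\infty\) for \(m_\pi\)-almost every \(\phi\). This follows from Fubini on the disintegration of the frame law, since \(I_\mu(U)<\infty\) almost surely.
\end{itemize}
On such a plane, write \(\operatorname{proj}_P a=r_a(e_1\cos\alpha_a+e_2\sin\alpha_a)\) and let \(\nu_P\) be the pushforward of \(\mu\) under \(a\mapsto\alpha_a \bmod \pi\). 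This is a probability measure on \(\mathbb T_\pi\). By \cref{eq:common-phase-ratios},
\[
  Z=-\int\tan(\phi-\alpha)\,\nu_P(\dd\alpha).
\]

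\textbf{Step 3: the secant bound.} The key estimate comes from \(|a^\top U_\phi|=r_a|\cos(\phi-\alpha_a)|\) and \(r_a\le 1\):
\[
  \frac{1}{|\cos(\phi-\alpha_a)|}=\frac{r_a}{|a^\top U_\phi|}\le\frac1{|a^\top U_\phi|}.
\]
Hence \(J_{\nu_P}(\phi)\le I_\mu(U_\phi)<\infty\) for almost every \(\phi\), so \(\nu_P\) satisfies \cref{eq:secant-integrability}. \cref{thm:planar-cauchy} then gives \(\Law(Z\mid P)=\Cauchy(0,1)\), with the sign harmless by symmetry. Integrating over \(P\) gives \(Z\sim\Cauchy(0,1)\).

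\textbf{Step 4: conclusion.} The field \(T\) is a measurable tangent field, defined almost everywhere, and \(V\) is conditionally Haar on the unit sphere of \(U^\perp\). \cref{thm:tangent-projection-equivalence} therefore yields
\[
  \frac{1}{1+\|T(U)\|^2}\sim\Beta\!\left(\frac12,\frac{p-1}{2}\right).
\]
Since \(\|g_\mu\|^2=1+\|T\|^2\), this is exactly \cref{eq:measure-beta}.

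The main obstacle is the measure-theoretic bookkeeping in Step 2. One must pass from the almost-sure statement about \(U\) to an almost-every-\(\phi\) statement on almost every oriented plane, and choose the in-plane bases measurably, for instance on a countable Grassmannian atlas as in the proof of \cref{thm:zonal-rigidity}. If the disintegration argument proves awkward, the fallback is empirical approximation \(\mu_n\to\mu\) with dominated convergence, but that route needs a uniform domination that \cref{eq:measure-reciprocal-integrability} does not obviously supply. The direct plane-wise route avoids this.
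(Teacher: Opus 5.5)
Your argument is correct, but the paper proves \cref{thm:measure-beta} differently. It uses empirical approximation. It samples $A_1,A_2,\ldots$ i.i.d.\ from $\mu$ and applies the vector strong law at each fixed $u$ with $I_\mu(u)<\infty$, which is exactly the integrability the strong law needs. Joint measurability and Fubini then fix one realization for which $n^{-1}\sum_{j\le n}a_j/(a_j^\top u)\to g_\mu(u)$ for Haar-almost every $u$. The finite law of \cref{thm:universal-beta} passes to the limit. Your worry that this fallback needs a uniform domination is unfounded. $H_n(U)\in(0,1]$ converges almost surely to $H_\mu(U)$ because $u^\top g_\mu(u)=1$ forces $g_\mu(u)\neq0$, so bounded continuous test functions suffice.

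Your direct plane-wise argument is essentially the paper's second proof, recorded as \cref{prop:measure-conditional-cauchy}. The empirical route is shorter and treats the finite theorem as a black box, so it inherits either the planar or the Pillai--Meng proof. Your route bypasses the finite theorem and yields more: singularity of $\nu_P$ and the plane-conditional Cauchy law for almost every plane, which the paper needs later for \cref{thm:thin-support-rigidity}.

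Your secant bound $J_{\nu_P}(\phi)\le I_\mu(U_\phi)$, obtained from $r_a\le1$, is slightly leaner than the paper's version. The paper introduces the weighted phase measure $\eta_P$ with the exact identity $J_{\eta_P}(\phi)=I_\mu(U_\phi)$ and transfers singularity from $\eta_P$ to $\nu_P$; it reuses that identity in \cref{cor:plane-wise-wiener}. For this theorem your inequality suffices, since \cref{thm:planar-cauchy} supplies singularity itself.

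The measurable basis selection you flag is not a real obstacle. Both the conditional law of $Z$ given $P$ and the event $\{I_\mu(U)<\infty\}$ are basis-free, so any basis of each plane may be used to compute them.
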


\begin{proof}
Let \(\sigma\) denote Haar probability measure on
\(\sphere^{p-1}\), and let \(A_1,A_2,\ldots\) be independent with common law
\(\mu\) on an auxiliary probability space.  Extend the Borel kernel
\[
  r(a,u):=\frac{a}{a^\top u}
\]
by setting \(r(a,u)=0\) when \(a^\top u=0\).  If \(u\) satisfies
\cref{eq:measure-reciprocal-integrability}, then
\(\mu\{a:a^\top u=0\}=0\), and
\[
  \E\|r(A_1,u)\|=I_\mu(u)<\infty,
  \qquad
  \E r(A_1,u)=g_\mu(u).
\]
Define the set of full Haar measure
\[
  D_\mu:=\{u:I_\mu(u)<\infty\}.
\]
For every \(u\in D_\mu\), the corresponding pole set has
\(\mu\)-measure zero.  Hence
\begin{equation}
  u^\top g_\mu(u)
  =
  \int_{\{a:a^\top u\neq0\}}
  \frac{u^\top a}{a^\top u}\,\mu(\dd a)
  =1.
  \label{eq:measure-euler-pairing}
\end{equation}
The set \(D_\mu\) is Borel: the extended nonnegative kernel
\((a,u)\mapsto |a^\top u|^{-1}\), taking value \(+\infty\) when
\(a^\top u=0\), is jointly Borel, and its parameter integral is Borel.  The
same observation component by component shows that
\(g_\mu\) is Borel on \(D_\mu\).  Extend it by zero on \(D_\mu^c\) for the
measurability argument below.
The vector-valued strong law of large numbers therefore gives
\begin{equation}
  \frac1n\sum_{j=1}^n r(A_j,u)
  \longrightarrow g_\mu(u)
  \qquad\text{almost surely}
  \label{eq:empirical-field-slln}
\end{equation}
for every such fixed \(u\).

The set on which the convergence in
\cref{eq:empirical-field-slln} holds is jointly measurable in the sampled
sequence and in \(u\).  Indeed, coordinatewise it is a countable
intersection over \(m\geq1\), followed by a countable union over \(N\), of
the intersections over \(n\geq N\) of the Borel inequalities
\[
  \left|
    \frac1n\sum_{j=1}^n r_\ell(A_j,u)-g_{\mu,\ell}(u)
  \right|<\frac1m .
\]
The hypothesis and Fubini's theorem consequently
show that, for \(\mu^{\otimes\mathbb N}\)-almost every realization
\((a_j)_{j\geq1}\), the convergence holds for
\(\sigma\)-almost every \(u\).  Fix one such deterministic realization.  The
countable union
\[
  \bigcup_{j\geq1}\{u:a_j^\top u=0\}
\]
is also \(\sigma\)-null.  Hence, for \(\sigma\)-almost every \(u\),
\begin{equation}
  g_n(u)
  :=
  \frac1n\sum_{j=1}^n\frac{a_j}{a_j^\top u}
  \longrightarrow g_\mu(u).
  \label{eq:empirical-field-convergence}
\end{equation}

For every \(n\), \cref{thm:universal-beta}, applied to the finite
arrangement \(a_1,\ldots,a_n\) with equal weights, gives
\begin{equation}
  H_n(U):=\frac{1}{\|g_n(U)\|^2}
  \sim\Beta\!\left(\frac12,\frac{p-1}{2}\right).
  \label{eq:empirical-beta}
\end{equation}
Moreover, \cref{eq:measure-euler-pairing} gives
\(u^\top g_\mu(u)=1\), so \(g_\mu(u)\neq0\), and
\[
  H_n(U)\longrightarrow H_\mu(U)
  \qquad\text{almost surely}.
\]
Taking expectations of bounded continuous functions and using
\cref{eq:empirical-beta} proves \cref{eq:measure-beta}.  Finally,
\(U^\top N_\mu(U)=1/\|g_\mu(U)\|\), which gives the incidence form.
\end{proof}

The condition in \cref{eq:measure-reciprocal-integrability} is scale-free in
the directions \(a\).  Thus an equivalent statement may be formulated for
a probability measure on real projective space by choosing arbitrary unit
representatives.

\subsection{Absolute integrability forces singular phase projections}

The empirical proof gives more than a closure theorem: its integrability
hypothesis forces the one-dimensional phase measures seen from almost every
two-plane to be singular.  The common-phase argument then extends through a
Herglotz integral.

\begin{proposition}[Conditional Cauchy law for measure arrangements]
\label{prop:measure-conditional-cauchy}
Under the assumptions of \cref{thm:measure-beta}, let \((U,V)\) be a Haar
oriented orthonormal two-frame and let \(P=\operatorname{span}\{U,V\}\) with
its induced orientation.  For Haar-almost every oriented plane \(P\), the
angular pushforward modulo \(\pi\) of \(\mu\) under orthogonal projection
onto \(P\) is a singular probability measure, and
\begin{equation}
  \Law\{g_\mu(U)^\top V\mid P\}
  =\Cauchy(0,1).
  \label{eq:measure-conditional-cauchy}
\end{equation}
Consequently, \cref{thm:measure-beta} also has a direct conditional
common-phase proof.
\end{proposition}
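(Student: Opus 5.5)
Fix an oriented plane \(P\) with a positive orthonormal basis \((e_1,e_2)\), and parametrize the frame as in \cref{eq:frame-parametrization}, \(U_\phi=e_1\cos\phi+e_2\sin\phi\), \(V_\phi=-e_1\sin\phi+e_2\cos\phi\). By \cref{lem:uniform-frame-phase}, conditional on \(P\) the phase \(\phi\) is uniform. For every \(a\) with \(\operatorname{proj}_P(a)\neq0\), write \(\operatorname{proj}_P(a)=r(a)(e_1\cos\alpha(a)+e_2\sin\alpha(a))\) with \(r(a)>0\). Then \(a^\top U_\phi=r(a)\cos(\phi-\alpha(a))\) and \(a^\top V_\phi=-r(a)\sin(\phi-\alpha(a))\), exactly as in \cref{eq:common-phase-ratios}. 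The natural phase measure is the pushforward \(\nu_P:=\alpha_\#\mu\) modulo \(\pi\). The weight \(r(a)\) cancels in every ratio, so \(\nu_P\) is a probability measure, and
\[
  \int\frac{\mu(\dd a)}{|a^\top U_\phi|}=\int\frac{\mu(\dd a)}{r(a)|\cos(\phi-\alpha(a))|}\;\geq\;\int\frac{\nu_P(\dd\alpha)}{|\cos(\phi-\alpha)|}=J_{\nu_P}(\phi),
\]
using \(r(a)\leq1\) for unit \(a\). First I check that \(\mu\{a:\operatorname{proj}_P a=0\}=0\) for almost every \(P\). If this mass were positive, then \(a^\top U=0\) on that set for every \(U\in P\), which would give \(I_\mu(U)=\infty\) on all of \(P\). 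By Fubini over the Haar frame this can happen only on a null set of planes, since \(I_\mu<\infty\) holds Haar-a.s. and \(U\) is Haar.

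Next, I transfer the hypothesis \cref{eq:measure-reciprocal-integrability} to the planes. Because \(U\) is Haar, Fubini over the disintegration \((P,\phi)\) shows that for Haar-almost every \(P\), \(I_\mu(U_\phi)<\infty\) for \(m_\pi\)-almost every \(\phi\). By the display above, \(J_{\nu_P}(\phi)<\infty\) for almost every \(\phi\), so \cref{lem:secant-singularity} makes \(\nu_P\) singular. The same display and Fubini in \(a\) give
\[
  g_\mu(U_\phi)^\top V_\phi=\int\frac{a^\top V_\phi}{a^\top U_\phi}\,\mu(\dd a)=-\int\tan(\phi-\alpha)\,\nu_P(\dd\alpha)=-F_{\nu_P}(\phi).
\]
This needs absolute convergence of the Bochner integral at \(U_\phi\), which holds because \(\|r(a,u)\|\) is integrable. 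The change of variables to \(\nu_P\) is justified since the integrand depends on \(a\) only through \(\alpha(a)\).

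\Cref{thm:planar-cauchy} now applies to \(\nu_P\), so \(F_{\nu_P}(\Phi)\sim\Cauchy(0,1)\). The symmetry of the Cauchy law absorbs the sign, which gives \cref{eq:measure-conditional-cauchy}. Integrating over \(P\) shows that \(Z=g_\mu(U)^\top V\) is standard Cauchy. Since \(u^\top g_\mu(u)=1\), we have \(g_\mu(U)^\top V=T(U)^\top V\) with \(T=g_\mu-\mathrm{id}\) tangent. Then \cref{thm:tangent-projection-equivalence} gives the Beta law for \(H_\mu(U)=1/(1+\|T\|^2)\), which is the direct common-phase proof of \cref{thm:measure-beta}.

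The main obstacle is measure-theoretic: making the disintegration over oriented planes rigorous. I need \(\alpha(a)\) and the aligned basis to be chosen measurably in \(P\), using a countable Grassmannian atlas as in the proof of \cref{thm:zonal-rigidity}. I also need the ``almost every \(\phi\) for almost every \(P\)'' statement to follow from the Haar-a.s. hypothesis via joint Borel measurability of \(I_\mu\), which was already established in the proof of \cref{thm:measure-beta}.
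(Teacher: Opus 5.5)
Your proof is correct and follows the paper's argument step for step. The steps are the common-phase parametrization, Haar-frame disintegration giving \(I_\mu(U_\phi)<\infty\) for almost every \((P,\phi)\), \cref{lem:secant-singularity} for singularity, the tangent identity, \cref{thm:planar-cauchy}, and finally \cref{thm:tangent-projection-equivalence}. The only differences are harmless streamlinings: you apply the secant lemma to \(\nu_P\) directly via \(J_{\nu_P}(\phi)\leq I_\mu(U_\phi)\) (using \(r_P\leq1\)), rather than first proving the weighted measure \(\eta_P\) singular and transferring null sets, and you obtain \(\mu(P^\perp)=0\) from the integrability hypothesis rather than from a fixed-\(a\) Fubini argument.
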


\begin{proof}
Fix an oriented orthonormal basis \((e_1,e_2)\) of \(P\) and write
\begin{equation}
  U_\phi=e_1\cos\phi+e_2\sin\phi,
  \qquad
  V_\phi=-e_1\sin\phi+e_2\cos\phi.
  \label{eq:measure-frame-phase}
\end{equation}
For a fixed \(a\in\sphere^{p-1}\), the event that \(a\perp P\) has
Grassmannian probability zero.  Fubini's theorem therefore gives
\begin{equation}
  \mu(P^\perp\cap\sphere^{p-1})=0
  \quad\text{for almost every }P.
  \label{eq:measure-no-orthogonal-mass}
\end{equation}
For such a plane, write
\begin{equation}
  \operatorname{proj}_P(a)
  =r_P(a)
   \{e_1\cos\widetilde\alpha_P(a)
      +e_2\sin\widetilde\alpha_P(a)\},
  \qquad 0<r_P(a)\leq1,
  \label{eq:measure-phase-map}
\end{equation}
where \(\widetilde\alpha_P(a)\) is taken modulo \(2\pi\).  Let
\(\alpha_P(a)\) be its class modulo \(\pi\), define \(\alpha_P\)
arbitrarily on the \(\mu\)-null set
\(P^\perp\cap\sphere^{p-1}\), and let
  \begin{equation}
  \nu_P=(\alpha_P)_\#\mu,
  \qquad
  \eta_P(B)
  :=
  \int
  \frac{\boldsymbol 1\{\alpha_P(a)\in B\}}{r_P(a)}
  \,\mu(\dd a).
  \label{eq:weighted-phase-measure}
  \end{equation}
Thus \(\nu_P\) is a Borel probability measure.  The weighted phase measure
\(\eta_P\) is the exact object governing absolute reciprocal
integrability.

The Haar disintegration in \cref{lem:uniform-frame-phase} and the hypothesis
\cref{eq:measure-reciprocal-integrability} imply that, for almost every
\(P\), \(I_\mu(U_\phi)<\infty\) for almost every \(\phi\).  For those
\((P,\phi)\),
\begin{equation}
  I_\mu(U_\phi)
  =
  \int
  \frac{1}{r_P(a)|\cos\{\phi-\alpha_P(a)\}|}
  \,\mu(\dd a)
  =
  J_{\eta_P}(\phi).
  \label{eq:phase-secants-controlled}
\end{equation}
The identity in \cref{eq:phase-secants-controlled} is purely algebraic and
holds whenever the two extended integrals are defined; the hypothesis of
\cref{thm:measure-beta} is used here only to obtain almost-everywhere
finiteness.
Because \(J_{\eta_P}\) is finite at almost every phase, \(\eta_P\) is
finite and \cref{lem:secant-singularity} makes it singular.  The measures
\(\eta_P\) and \(\nu_P\) have the same null sets: for every Borel \(B\),
the integrand defining \(\eta_P(B)\) is finite and strictly positive on
\(\alpha_P^{-1}(B)\), so either measure vanishes on \(B\) exactly when the
other does.  Hence \(\nu_P\) is singular as well.  Moreover,
\(J_{\nu_P}\leq J_{\eta_P}<\infty\) almost everywhere, so
\cref{thm:planar-cauchy} applies to the probability measure \(\nu_P\).

Absolute convergence and
\cref{eq:measure-frame-phase,eq:measure-phase-map} give
\begin{align}
  g_\mu(U_\phi)^\top V_\phi
  &=
  \int\frac{a^\top V_\phi}{a^\top U_\phi}\,\mu(\dd a)
  \notag\\
  &=-\int\tan(\phi-\alpha)\,\nu_P(\dd\alpha).
  \label{eq:measure-tangent-boundary}
\end{align}
The right side without its leading minus sign is standard Cauchy by
\cref{thm:planar-cauchy}; symmetry proves
\cref{eq:measure-conditional-cauchy}.

Finally, \(T_\mu(U):=g_\mu(U)-U\) is tangent and
\(T_\mu(U)^\top V=g_\mu(U)^\top V\).  Applying
\cref{thm:tangent-projection-equivalence} gives a second proof of
\cref{thm:measure-beta}.
\end{proof}

\subsection{A local-to-global signed-measure converse}

For a finite real signed measure \(\eta\) on \(\mathbb{RP}^{p-1}\), the
projective phase pushforward is defined for Haar-almost every oriented
two-plane by
\begin{equation}
  \nu_P:=(\alpha_P)_\#\eta.
  \label{eq:signed-phase-pushforward}
\end{equation}
Indeed, for every fixed projective direction, the planes onto which it
projects to zero form a Haar-null set, so Fubini's theorem applies to
\(|\eta|\).  Define the essential phase norm and its cancellation deficit by
\begin{equation}
  \|\eta\|_{\mathrm{ph}}
  :=\operatorname*{ess\,sup}_{P\in\operatorname{Gr}^+(2,p)}
       \|\nu_P\|_{\mathrm{TV}},
  \qquad
  \Delta_{\mathrm{ph}}(\eta)
  :=\|\eta\|_{\mathrm{TV}}-\|\eta\|_{\mathrm{ph}}\geq0.
  \label{eq:phase-norm-deficit}
\end{equation}
The inequality is contraction of total variation under pushforward.  We
use any positive orthonormal basis of \(P\); changing it only rotates the
phase circle and leaves the norm unchanged.  We
call \(\eta\) \emph{phase norming} when
\(\|\eta\|_{\mathrm{ph}}=\|\eta\|_{\mathrm{TV}}\).  If
\(\eta=\eta^+-\eta^-\) is the Jordan decomposition, a fixed phase view
preserves total variation exactly when
\((\alpha_P)_\#\eta^+\) and \((\alpha_P)_\#\eta^-\) are mutually singular.
Thus phase norming asks only for arbitrarily small essential phase
cancellation.  It neither requires an injective phase map nor forbids
collisions between points carrying the same sign.

\begin{remark}[A single view can hide signs]
Let \(p\geq3\), \(P_0=\operatorname{span}(e_1,e_2)\),
\(a_\pm=(e_1\pm e_3)/\sqrt2\), and, for \(c>0\),
\[
  \eta=(1+c)\delta_{[a_+]}-c\delta_{[a_-]}.
\]
This is a mass-one signed measure with total variation \(1+2c\), but the two
directions have the same \(P_0\)-phase, so \(\nu_{P_0}=\delta_0\).  That view
is positive and its tangent boundary law is standard Cauchy.  For
Haar-almost every plane, however, the two phases are distinct and the
pushforward retains total variation \(1+2c\).  Thus one favorable view can be
misleading; the almost-every-plane law and the essential phase norm capture
different information from any selected projection.
\end{remark}

\begin{theorem}[Phase-norming local-to-global rigidity]
\label{thm:thin-support-rigidity}
Let \(p\geq2\), let \(\eta\) be a finite real signed measure of mass one on
\(\mathbb{RP}^{p-1}\), and suppose that
\begin{equation}
  I_{|\eta|}(U)
  :=\int_{\mathbb{RP}^{p-1}}
      \frac{1}{|a^\top U|}\,|\eta|(\dd[a])
  <\infty
  \qquad\text{almost surely},
  \label{eq:signed-reciprocal-integrability}
\end{equation}
where either unit representative of \([a]\) may be used.  Define
\begin{equation}
  g_\eta(u)
  :=\int\frac{a}{a^\top u}\,\eta(\dd[a]),
  \qquad
  T_\eta(u):=g_\eta(u)-u.
  \label{eq:signed-projective-field}
\end{equation}
If
\begin{equation}
  \Law\{T_\eta(U)^\top V\mid P\}=\Cauchy(0,1)
  \quad\text{for Haar-almost every oriented plane }P,
  \label{eq:signed-conditional-cauchy}
\end{equation}
then \(\nu_P\) is a positive probability measure for Haar-almost every
\(P\).  Equivalently, if \(\eta=\eta^+-\eta^-\), then
\begin{equation}
  (\alpha_P)_\#\eta^-
  \leq
  (\alpha_P)_\#\eta^+
  \quad\text{for Haar-almost every }P.
  \label{eq:phase-jordan-domination}
\end{equation}
Moreover,
\begin{equation}
  \|\eta\|_{\mathrm{ph}}=1,
  \qquad
  \Delta_{\mathrm{ph}}(\eta)
  =2\eta^-(\mathbb{RP}^{p-1}).
  \label{eq:phase-deficit-negative-mass}
\end{equation}
Consequently, within the phase-norming class, the following are equivalent:
\begin{enumerate}[label=\textup{(\roman*)}]
\item \(\eta\) is a Borel probability measure;
\item the conditional Cauchy law \cref{eq:signed-conditional-cauchy} holds.
\end{enumerate}
For \(p=2\), every signed measure is phase norming, so no additional phase
hypothesis is needed.
\end{theorem}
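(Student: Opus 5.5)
The plan is to reduce everything to one oriented plane at a time, apply the signed one-plane rigidity of \cref{thm:signed-clark-rigidity}, and then integrate over planes.

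\emph{Plane-wise reduction.} For Haar-almost every oriented plane \(P\), the following hold.
\begin{itemize}
\item \(|\eta|\) gives no mass to directions orthogonal to \(P\).
\item Along the common phase parametrization \cref{eq:measure-frame-phase}, the secant integral \(J_{|\eta_P|}\) of the weighted measure \(\eta_P(B)=\int \boldsymbol 1\{\alpha_P(a)\in B\}\,r_P(a)^{-1}\,|\eta|(\dd[a])\) is finite for almost every \(\phi\). This follows from \cref{eq:signed-reciprocal-integrability}, \cref{lem:uniform-frame-phase} and Fubini, exactly as in the proof of \cref{prop:measure-conditional-cauchy}.
\end{itemize}
Since \(r_P\le1\), the phase pushforward \(\nu_P=(\alpha_P)_\#\eta\) satisfies \(J_{|\nu_P|}\le J_{(\alpha_P)_\#|\eta|}<\infty\) almost everywhere. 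By \cref{lem:secant-singularity}, \(|\nu_P|\) is singular. The mass of \(\nu_P\) is \(\eta(\mathbb{RP}^{p-1})=1\). Absolute convergence then gives the signed analogue of \cref{eq:measure-tangent-boundary}:
\[
T_\eta(U_\phi)^\top V_\phi=g_\eta(U_\phi)^\top V_\phi=-\int\tan(\phi-\alpha)\,\nu_P(\dd\alpha).
\]
By \cref{lem:signed-ordinary-boundary}, this ordinary integral is the nontangential boundary value of \(\mathcal F_{\nu_P}\).

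\emph{Positivity on almost every plane.} By symmetry of the Cauchy law, the conditional hypothesis \cref{eq:signed-conditional-cauchy} says that \(\mathcal F^*_{\nu_P}(e^{2i\Phi})\) is standard Cauchy. \Cref{thm:signed-clark-rigidity} then forces \(\nu_P\) to be a positive probability measure, so \(\|\nu_P\|_{\mathrm{TV}}=1\) for almost every \(P\). Writing \(\nu_P=(\alpha_P)_\#\eta^+-(\alpha_P)_\#\eta^-\), positivity is exactly \cref{eq:phase-jordan-domination}. The essential supremum over planes gives \(\|\eta\|_{\mathrm{ph}}=1\). Because \(\eta^+(\mathbb{RP}^{p-1})-\eta^-(\mathbb{RP}^{p-1})=1\), we have \(\|\eta\|_{\mathrm{TV}}=1+2\eta^-(\mathbb{RP}^{p-1})\), and subtracting \(\|\eta\|_{\mathrm{ph}}=1\) gives \cref{eq:phase-deficit-negative-mass}.

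\emph{The equivalence.} If \(\eta\) is a probability measure, then \(\nu_P\) is a probability measure satisfying the secant condition, and \cref{thm:planar-cauchy} gives the conditional law. This is the argument of \cref{prop:measure-conditional-cauchy}, now using only the hypothesis \cref{eq:signed-reciprocal-integrability}. Conversely, assume the conditional law and phase norming. Then \(\|\eta\|_{\mathrm{TV}}=\|\eta\|_{\mathrm{ph}}=1\), so \(\Delta_{\mathrm{ph}}=0\). By \cref{eq:phase-deficit-negative-mass}, \(\eta^-=0\).

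For \(p=2\), there is only one plane up to orientation. The map \(\alpha_P\) from \(\mathbb{RP}^1\) to \(\mathbb T_\pi\) is a bijection, a rotation composed possibly with a reflection. Pushforward under a bijection preserves total variation, so every signed measure is phase norming.

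\emph{Main obstacle.} I expect the measurability of the plane-wise constructions to be the hardest step. One needs \(P\mapsto\|\nu_P\|_{\mathrm{TV}}\) to be measurable so that the essential supremum is meaningful. One also needs the conditional law given \(P\) to coincide with the law of the deterministic function of a uniform \(\phi\) for almost every \(P\). I would handle both by choosing basis sections on a countable Grassmannian atlas, as in \cref{thm:zonal-rigidity}. Measurability of the total variation then follows from writing \(\|\nu_P\|_{\mathrm{TV}}\) as a supremum over a countable family of continuous test functions bounded by one.
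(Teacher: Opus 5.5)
Your proposal is correct and follows the paper's proof essentially step for step. Reciprocal integrability and Haar-frame disintegration give plane-wise secant control; \cref{lem:signed-ordinary-boundary} identifies the ordinary tangent integral with the Herglotz boundary value; \cref{thm:signed-clark-rigidity} makes almost every $\nu_P$ positive; and $\|\eta\|_{\mathrm{TV}}=1+2\eta^-(\mathbb{RP}^{p-1})$ then yields the deficit formula and the equivalence. The only cosmetic differences are that you apply \cref{lem:secant-singularity} directly to the automatically finite $|\nu_P|$ rather than to the weighted measure, and that for (i)$\Rightarrow$(ii) you rerun \cref{thm:planar-cauchy} plane by plane instead of invoking \cref{prop:measure-conditional-cauchy} through a measurable spherical lift.
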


\begin{proof}
The integrand in \cref{eq:signed-projective-field} does not depend on the
choice of representative, and \cref{eq:signed-reciprocal-integrability}
makes the field absolutely defined almost surely.  Haar-frame
disintegration shows that, for almost every oriented plane \(P\),
\(I_{|\eta|}(U_\phi)<\infty\) for almost every phase \(\phi\).  For such a
plane, let
\[
  \lambda_P:=(\alpha_P)_\#|\eta|,
  \qquad
  \xi_P(B):=\int
    \frac{\boldsymbol 1\{\alpha_P(a)\in B\}}{r_P(a)}
    \,|\eta|(\dd[a]),
\]
using the projected radius in \cref{eq:measure-phase-map}.  Exactly as in
\cref{eq:phase-secants-controlled},
\[
  J_{\xi_P}(\phi)=I_{|\eta|}(U_\phi)<\infty
  \quad\text{for almost every }\phi.
\]
Because the secant kernel is at least one, finiteness at any such phase
implies that \(\xi_P\) is finite; it is then singular by
\cref{lem:secant-singularity}.  Since
\(\lambda_P\leq\xi_P\) and \(|\nu_P|\leq\lambda_P\), the signed phase
measure \(\nu_P\) is singular as well, and
\(J_{|\nu_P|}\leq J_{\xi_P}<\infty\) almost everywhere.

Absolute integration gives
\begin{equation}
  -T_\eta(U_\phi)^\top V_\phi
  =\int_{\mathbb T_\pi}
      \tan(\phi-\alpha)\,\nu_P(\dd\alpha).
  \label{eq:signed-phase-tangent}
\end{equation}
Indeed, the absolute value of the original ratio is bounded by
\(|a^\top U_\phi|^{-1}\).  Thus
\cref{lem:signed-ordinary-boundary} identifies
\cref{eq:signed-phase-tangent} with the Herglotz boundary value.  Under
\cref{eq:signed-conditional-cauchy}, Cauchy symmetry and
\cref{thm:signed-clark-rigidity} show that \(\nu_P\) is positive for almost
every \(P\).  Its mass is one, so \(\|\nu_P\|_{\mathrm{TV}}=1\), which
proves \(\|\eta\|_{\mathrm{ph}}=1\).  Since a mass-one signed measure
satisfies
\[
  \|\eta\|_{\mathrm{TV}}
  =1+2\eta^-(\mathbb{RP}^{p-1}),
\]
\cref{eq:phase-deficit-negative-mass} follows.

If \(\eta\) is phase norming, the conditional law therefore gives
\(\|\eta\|_{\mathrm{TV}}=1=\eta(\mathbb{RP}^{p-1})\), and the Jordan
decomposition forces \(\eta^-\) to vanish.  Conversely, if \(\eta\) is a
probability measure, it is automatically phase norming.  After choosing a
measurable spherical lift, \cref{prop:measure-conditional-cauchy} proves the
conditional law.

When \(p=2\), the only two-plane is the ambient plane and its projective
phase map is a projective isomorphism.  Its pushforward preserves total
variation for every signed measure.
\end{proof}

The phase-norming condition is strictly weaker than injectivity of a phase
view.  The next result gives a geometric sufficient criterion involving only
opposite signs.

\begin{lemma}[Generic cross-sign separation below the collision threshold]
\label{lem:generic-cross-sign-separation}
Let \(p\geq3\), and let \(K_+,K_-\subset\mathbb{RP}^{p-1}\) be disjoint
nonempty compact sets satisfying
\begin{equation}
  \dim_{\mathrm H}(K_+\times K_-)<1.
  \label{eq:cross-product-dimension-threshold}
\end{equation}
For Haar-almost every oriented two-plane \(P\), no point of
\(K_+\cup K_-\) projects to zero and
\begin{equation}
  \alpha_P(K_+)\cap\alpha_P(K_-)=\varnothing.
  \label{eq:cross-sign-phase-separation}
\end{equation}
\end{lemma}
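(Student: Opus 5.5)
The plan is to reduce both conclusions to a single statement: a set of directions of Hausdorff dimension less than \(2\) is almost surely missed by \(P^\perp\), where \(P\) is a Haar two-plane. We then prove that statement by covering.

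First, describe the bad planes geometrically. A direction \([a]\) projects to zero exactly when \(a\perp P\). For \([a]\in K_+\) and \([b]\in K_-\), the projections of \(a\) and \(b\) onto \(P\) are parallel, so that \(\alpha_P(a)=\alpha_P(b)\), exactly when some nonzero \(c=sa+tb\) satisfies \(c\perp P\). This characterization also covers the degenerate case where one of the projections vanishes. Since \(K_+\) and \(K_-\) are disjoint compact sets, there is a \(\delta>0\) with \(\dist([a],[b])\ge\delta\) for all \([a]\in K_+\), \([b]\in K_-\). Consider the map
\[
  F\colon([a],[b],\theta)\longmapsto[\cos\theta\,a+\sin\theta\,b]\in\mathbb{RP}^{p-1}
\]
on \(K_+\times K_-\times\mathbb T_\pi\), using local unit lifts. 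Because \(a\) and \(b\) stay uniformly non-parallel, \(F\) is locally Lipschitz. Let \(C\) be the image of \(F\). Every bad plane, for either conclusion, satisfies \(P^\perp\cap C\neq\varnothing\): the pure points \(K_\pm\) lie in \(C\) at \(\theta=0\) and \(\theta=\pi/2\). Using \(\dim_{\mathrm H}(E\times[0,\pi])\le\dim_{\mathrm H}E+\overline{\dim}_{\mathrm B}[0,\pi]\) and the fact that Lipschitz maps do not raise Hausdorff dimension, together with \eqref{eq:cross-product-dimension-threshold}, we get \(\dim_{\mathrm H}C<2\).

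Second, prove the key metric estimate: there is a constant \(C_p\) such that, for every fixed \([c]\) and every \(r>0\),
\[
  \Prb\{\dist([c],\mathbb P(P^\perp))<r\}\le C_p r^2 .
\]
Here \(\mathbb P(P^\perp)\) is a projective subspace of codimension two. The estimate says that \(|c^\top U|\) and \(|c^\top V|\) are both \(O(r)\) for an orthonormal basis \((U,V)\) of \(P\). By rotation invariance we may take \(c=e_1\). Then the pair \((U_1,V_1)\) of the Haar two-frame has a bounded density near the origin for \(p\ge3\); for example, via the Gaussian construction \(Q=G(G^\top G)^{-1/2}\) it is the first row of \(Q\). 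This gives the \(r^2\) bound. This step, checking that the density is bounded uniformly in \(p\ge3\) (even \(p=3\) suffices), is where the hypothesis \(p\ge3\) enters. It is the main technical point, though I expect it to be routine.

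Third, close with a covering argument. Fix \(s\) with \(\dim_{\mathrm H}C<s<2\) and \(\varepsilon>0\). Cover \(C\) by balls \(B([c_i],r_i)\) with \(r_i<1\) and \(\sum_i r_i^s<\varepsilon\). If \(P^\perp\) meets \(C\), then \(\mathbb P(P^\perp)\) comes within \(r_i\) of some \([c_i]\). The union bound gives probability at most
\[
  C_p\sum_i r_i^2\le C_p\sum_i r_i^s<C_p\varepsilon .
\]
Letting \(\varepsilon\to0\) shows that almost every \(P\) has \(P^\perp\cap C=\varnothing\). This yields both that no point of \(K_+\cup K_-\) projects to zero and the separation \eqref{eq:cross-sign-phase-separation}. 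Measurability of the bad set causes no difficulty, since compactness makes it closed in the Grassmannian.
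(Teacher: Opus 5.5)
Your proof is correct. It rests on the same collision characterization as the paper: a phase coincidence between \([a]\in K_+\) and \([b]\in K_-\) means some nonzero vector of \(\operatorname{span}(a,b)\) is orthogonal to \(P\). In the end it also uses the same dimension budget, but you organize the null-set step differently.

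The paper stays in \(\operatorname{Gr}^+(2,p)\). It bounds the Hausdorff dimension of the flag incidence \(\{([a],[b],[x],P)\}\) over \(K_+\times K_-\) by \(\dim_{\mathrm H}(K_+\times K_-)+(n-1)<n\), using local smooth trivializations, and then projects to the Grassmannian. It treats the zero-projection clause by a second, separate incidence count based on \(\dim_{\mathrm H}K_\pm<1\).

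You instead push everything down to \(\mathbb{RP}^{p-1}\). The union \(C\) of cross-sign projective lines contains \(K_\pm\), so one argument handles both clauses. The fiber \(\{P:P\subset x^\perp\}\) is accounted for not by a dimension count but by the explicit hitting bound \(\Prb\{\dist([c],\mathbb P(P^\perp))<r\}\le C_pr^2\), combined with a covering union bound.

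The paper's route is shorter, purely dimensional bookkeeping that adapts mechanically to other incidence configurations. Yours is more elementary and quantitative. Since your covering step only needs \(\mathcal H^2(C)=0\), it also works at the endpoint, for example under \(\mathcal H^1(K_+\times K_-)=0\), because crossing with an interval turns \(\mathcal H^1\)-nullity into \(\mathcal H^2\)-nullity.

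For the hitting bound itself, the cleanest justification is \(\|\operatorname{proj}_Pc\|^2\sim\Beta\bigl(1,(p-2)/2\bigr)\), the law already used in the proof of the zonal rigidity theorem, whose density is bounded near zero for each fixed \(p\geq3\). No uniformity in \(p\) is needed. For \(p=2\) the set \(\mathbb P(P^\perp)\) is empty, so that case is trivially true rather than being where the estimate breaks down.
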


\begin{proof}
Let \(G=\operatorname{Gr}^+(2,p)\), of dimension \(n=2(p-2)\).  Since
both factors are nonempty, their embeddings into the product show that
\(\dim_{\mathrm H}K_+<1\) and \(\dim_{\mathrm H}K_-<1\).  For either
carrier, consider the incidence pairs \(([a],P)\) with \(P\subset a^\perp\).
The fiber over \([a]\) is \(\operatorname{Gr}^+(2,p-1)\), of dimension
\(2(p-3)\).  Local smooth trivializations therefore bound the dimension of
the corresponding exceptional plane set by
\[
  \dim_{\mathrm H}K_\pm+2(p-3)<2(p-2)=n.
\]
Thus almost every plane has nonzero projection for every point of both
carriers.

The compact sets are disjoint, so their projective distance is positive.
For a fixed pair \(([a],[b])\in K_+\times K_-\), equality of projected
phases is equivalent to the existence of a projective line
\([x]\subset\operatorname{span}(a,b)\) for which \(P\subset x^\perp\).
The corresponding flag incidence
\[
  \{([a],[b],[x],P):[x]\subset\operatorname{span}(a,b),
       \ P\subset x^\perp\}
\]
is locally a smooth bundle over the separated pair space with fiber
dimension
\(1+2(p-3)=n-1\).  Restricting the base to \(K_+\times K_-\) therefore
gives Hausdorff dimension at most
\[
  \dim_{\mathrm H}(K_+\times K_-)+n-1<n.
\]
Its projection to \(G\) is Haar-null, proving
\cref{eq:cross-sign-phase-separation}.
\end{proof}

\begin{corollary}[Cross-sign carrier criterion]
\label{cor:cross-sign-carrier-rigidity}
Let \(\eta\) satisfy the mass-one and reciprocal-integrability assumptions
of \cref{thm:thin-support-rigidity}, and write
\(\eta=\eta^+-\eta^-\).  Suppose the two Jordan parts are carried by
disjoint \(\sigma\)-compact sets \(E_+,E_-\subset\mathbb{RP}^{p-1}\) such that
\begin{equation}
  \dim_{\mathrm H}(E_+\times E_-)<1.
  \label{eq:cross-sign-carrier-threshold}
\end{equation}
Then \(\eta\) is phase norming, and hence the conditional Cauchy law holds
if and only if \(\eta\) is a probability measure.

It is sufficient for \cref{eq:cross-sign-carrier-threshold} that either
\[
  \dim_{\mathrm H}E_+ + \dim_{\mathrm P}E_-<1
  \quad\text{or}\quad
  \dim_{\mathrm P}E_+ + \dim_{\mathrm H}E_-<1.
\]
In particular, the conclusion recovers the former full-support criterion
\(\dim_{\mathrm H}(K\times K)<1\), and hence the sufficient condition
\(\dim_{\mathrm P}K<1/2\), when \(|\eta|\) is carried by one compact set
\(K\).
\end{corollary}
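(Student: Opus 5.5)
The plan is to reduce to \cref{lem:generic-cross-sign-separation} by exhausting the two \(\sigma\)-compact carriers with compact pieces. Then \cref{thm:thin-support-rigidity} supplies the equivalence.

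Write \(E_+=\bigcup_i K_+^{(i)}\) and \(E_-=\bigcup_j K_-^{(j)}\) as increasing unions of compact sets, discarding empty ones. Each product \(K_+^{(i)}\times K_-^{(j)}\) is a subset of \(E_+\times E_-\), so by monotonicity of Hausdorff dimension it satisfies \cref{eq:cross-product-dimension-threshold}. The pieces are disjoint because \(E_+\cap E_-=\varnothing\). Hence \cref{lem:generic-cross-sign-separation} gives, for each pair \((i,j)\), a Haar-null exceptional set of planes. The countable union of these sets is null. Off it, \(\alpha_P\) is defined on \(E_+\cup E_-\) and \(\alpha_P(E_+)\cap\alpha_P(E_-)=\varnothing\).

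For such a plane, \((\alpha_P)_\#\eta^+\) is carried by \(\alpha_P(E_+)\) and \((\alpha_P)_\#\eta^-\) by \(\alpha_P(E_-)\). These images are disjoint \(\sigma\)-compact, hence Borel, sets, since continuous images of compacta are compact. The two pushforwards are therefore mutually singular, so \(\|\nu_P\|_{\mathrm{TV}}=\|\eta^+\|+\|\eta^-\|=\|\eta\|_{\mathrm{TV}}\). This holds for almost every \(P\), which is exactly phase norming. The final equivalence is then the last clause of \cref{thm:thin-support-rigidity}.

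For \(p=2\) the lemma is not needed, since every signed measure is phase norming by that theorem. I expect the one real point of care to be that the lemma is stated only for \(p\geq3\), and this case split handles it.

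For the sufficient conditions, I would use the standard product inequality \(\dim_{\mathrm H}(A\times B)\leq\dim_{\mathrm H}A+\dim_{\mathrm P}B\), applied in either order by symmetry. This inequality holds on the smooth manifold \(\mathbb{RP}^{p-1}\) via bi-Lipschitz charts and countable stability.

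For the single-carrier case, take \(K\) compact carrying \(|\eta|\). Choose disjoint \(\sigma\)-compact Borel carriers \(E_\pm\subset K\) of \(\eta^\pm\). These exist by the Hahn decomposition and inner regularity, which give increasing compacts inside the Hahn sets up to null sets. Then \(E_+\times E_-\subset K\times K\), so \(\dim_{\mathrm H}(K\times K)<1\) implies the threshold. Finally, \(\dim_{\mathrm H}(K\times K)\leq\dim_{\mathrm H}K+\dim_{\mathrm P}K\leq 2\dim_{\mathrm P}K<1\) whenever \(\dim_{\mathrm P}K<1/2\).

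The main obstacle I anticipate is the measure-theoretic bookkeeping: that \(\alpha_P\) is defined on all of \(E_\pm\) off a null set of planes, and that the images are measurable. Both follow from the lemma's first conclusion and from the continuity of \(\alpha_P\) on points with nonzero projection.
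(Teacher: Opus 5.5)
Your proposal is correct and follows the paper's proof essentially step for step. Both arguments exhaust the two carriers by compacts and apply \cref{lem:generic-cross-sign-separation} to every pair, discarding a countable union of exceptional planes. Both then obtain mutually singular phase pushforwards and hence phase norming, dispatch \(p=2\) through \cref{thm:thin-support-rigidity}, and use Tricot's product inequality for the sufficient conditions. The one point you leave implicit, which the paper states explicitly, is the trivial case of a vanishing Jordan part. There the lemma's nonemptiness hypothesis fails, but phase norming is immediate.
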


\begin{proof}
The assertion is immediate if one Jordan part vanishes.  If \(p=2\), phase
norming is already part of \cref{thm:thin-support-rigidity}, so suppose
\(p\geq3\).  Borel regularity supplies such disjoint \(\sigma\)-compact
carriers inside the Hahn sets, in particular when both parts are carried by
one compact \(K\).  Exhaust
each \(\sigma\)-compact carrier by an increasing sequence of compact sets.  Apply
\cref{lem:generic-cross-sign-separation} to every pair in the two
exhaustions and discard the resulting countable union of exceptional plane
sets.  For every remaining plane, the two phase pushforwards are carried by
disjoint Borel sets, each a countable union of compact images.  They are
therefore mutually singular, so phase pushforward preserves the total
variation of \(\eta\).  This proves phase norming and allows
\cref{thm:thin-support-rigidity} to be applied.

The two displayed sufficient conditions follow from Tricot's product
inequality
\[
  \dim_{\mathrm H}(E\times F)
  \leq\dim_{\mathrm H}E+\dim_{\mathrm P}F
\]
and its version with the factors reversed
\cite{tricot-1982,falconer-2014}.
\end{proof}

The cross-sign condition can be genuinely weaker than the former
full-support condition.  For example, take disjoint self-similar Cantor
carriers of dimensions \(s,t\in(0,1)\) with
\(s+t<1\leq2\max\{s,t\}\).  Their cross product is subcritical, while the
Cartesian square of their union is not.

Even without phase norming, the conditional law already excludes atomic
negative mass.

\begin{corollary}[No negative atoms]
\label{cor:no-negative-atoms}
Under \cref{eq:signed-reciprocal-integrability,eq:signed-conditional-cauchy},
the negative Jordan part \(\eta^-\) is atomless.  Consequently, the full
positivity conclusion holds whenever \(\eta^-\) is purely atomic.
\end{corollary}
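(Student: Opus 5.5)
The plan is to use the phase-domination conclusion \cref{eq:phase-jordan-domination} of \cref{thm:thin-support-rigidity}. Suppose \(\eta^-\) has an atom at a direction \([b]\) with mass \(\beta>0\). Since \(\eta^+\perp\eta^-\), we have \(\eta^+\{[b]\}=0\). For Haar-almost every oriented plane \(P\), the pushforward \((\alpha_P)_\#\eta^-\) has an atom of mass at least \(\beta\) at the phase \(\alpha_P([b])\). Domination then forces \((\alpha_P)_\#\eta^+\) to put mass at least \(\beta\) at that same phase. In other words,
\[
  \eta^+\bigl(\{[a]:\alpha_P([a])=\alpha_P([b])\}\bigr)\geq\beta
  \quad\text{for almost every }P.
\]
The aim is to show that the left side tends to zero on average over \(P\), which is a contradiction.

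The phase fiber \(\{[a]:\alpha_P([a])=\alpha_P([b])\}\) consists of directions \([a]\neq[b]\) such that \(\operatorname{span}(a,b)\) meets \(P^\perp\) nontrivially, together with \([b]\) itself, which carries no \(\eta^+\)-mass. So the main step is a Fubini computation. For a fixed \([a]\neq[b]\), the set of planes \(P\) with \(\alpha_P([a])=\alpha_P([b])\) is Haar-null. This is exactly the collision-set argument in the proof of \cref{thm:fixed-arrangement-rigidity}: the zero set of a nontrivial real-analytic function for \(p\geq3\), and empty for \(p=2\).

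By Tonelli, integrating over \(P\) then gives
\[
  \int \eta^+\bigl(\{[a]\neq[b]:\alpha_P([a])=\alpha_P([b])\}\bigr)\,\dd P
  =\int \mathrm{Haar}\{P:\text{collision of }[a],[b]\}\,\eta^+(\dd[a])=0.
\]
Hence for almost every \(P\) the \(\eta^+\)-mass of the fiber is zero, which contradicts the lower bound \(\beta\). So \(\eta^-\) is atomless. The joint measurability of the collision set in \(([a],P)\) needs a short remark, since it is cut out by a continuous polynomial condition on the frame.

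For the consequence: if \(\eta^-\) is purely atomic and also atomless, then \(\eta^-=0\). So \(\eta\) is a probability measure, which is the full positivity conclusion; the forward direction is \cref{prop:measure-conditional-cauchy}.

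I expect the main obstacle to be justifying that \(\alpha_P([b])\) is a genuine atom of \((\alpha_P)_\#\eta^-\) for almost every \(P\). This requires \([b]\) not to project to zero, which holds off a null set of planes. It also requires the pushforward to be taken with the fixed Jordan parts rather than with \(\nu_P\) directly. Everything else reduces to Fubini plus the null collision sets already established earlier.
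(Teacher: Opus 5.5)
Your proof is correct and follows essentially the paper's argument. The paper also uses that two distinct projective directions have colliding phases only on a Haar-null set of planes. It applies Fubini to \(|\eta|\) off the atom to conclude that \(\nu_P\) keeps an atom of mass \(-c\) for almost every \(P\), which contradicts the almost-every-plane positivity in \cref{thm:thin-support-rigidity}; your version runs the same Fubini step against \(\eta^+\) alone and invokes the equivalent domination form \cref{eq:phase-jordan-domination}, which is only a rephrasing.
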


\begin{proof}
If \(\eta^-\) had an atom of mass \(c>0\) at \([a_0]\), then
\(\eta\{[a_0]\}=-c\).  For every fixed \([b]\neq[a_0]\), the projected
phases of \([a_0]\) and \([b]\) collide only for a Haar-null set of
planes.  Fubini's theorem applied to
\(|\eta|\) off \([a_0]\) shows that, for almost every plane, the phase
fiber through \(\alpha_P(a_0)\) contains no other \(|\eta|\)-mass.  Thus
\(\nu_P\) has an atom of mass \(-c\), contradicting the almost-every-plane
positivity furnished by \cref{thm:thin-support-rigidity}.
\end{proof}

\begin{remark}
The exact unresolved issue is now diffuse phase cancellation, not geometric
injectivity.  Under the conditional Cauchy law,
\cref{eq:phase-jordan-domination,eq:phase-deficit-negative-mass} say that
the negative pushforward is dominated in almost every phase view and that
the phase deficit records
twice the hidden negative mass.  It remains open whether the reciprocal
integrability and compatibility across all planes force this deficit to
vanish.  The cross-sign criterion proves that they do whenever opposite
Jordan carriers fall below the collision threshold, even if either carrier
has extensive same-sign collisions.  Hausdorff dimension alone does not
control Cartesian products; see Besicovitch--Moran
\cite{besicovitch-moran-1945}.
\end{remark}

\section{Integrability boundaries for infinite arrangements}
\label{sec:countable-boundary}

Absolute reciprocal integrability is the precise gate in
\cref{thm:measure-beta}: without it, the vector field need not exist as an
ordinary integral.  This section analyzes that gate rather than introducing a
different phenomenon.  It first gives an exact pointwise criterion in terms of
mass accumulating near orthogonal hyperplanes, then translates the criterion
into sharp weight, clustering, and support conditions.

\subsection{An exact orthogonality-profile criterion}

For \(u\in\sphere^{p-1}\) and \(0<t\leq1\), define the mass of the
orthogonal belt of width \(t\) by
\begin{equation}
  b_\mu(u,t)
  :=
  \mu\{a:|a^\top u|\leq t\}.
  \label{eq:orthogonality-profile}
\end{equation}

\begin{theorem}[Wiener--Dini incidence criterion]
\label{thm:wiener-incidence}
For every finite Borel measure \(\mu\) on \(\sphere^{p-1}\), write
\(M_\mu=\mu(\sphere^{p-1})\).  Then, for every
\(u\in\sphere^{p-1}\), with extended values allowed,
\begin{equation}
  I_\mu(u)
  =
  M_\mu+\int_0^1\frac{b_\mu(u,t)}{t^2}\,\dd t.
  \label{eq:wiener-profile-integral}
\end{equation}
If
\begin{equation}
  W_\mu(u)
  :=
  \sum_{n=0}^\infty
  2^n b_\mu(u,2^{-n}),
  \label{eq:wiener-profile-series}
\end{equation}
then
\begin{equation}
  \frac{W_\mu(u)+M_\mu}{2}
  \leq I_\mu(u)\leq W_\mu(u)+M_\mu.
  \label{eq:wiener-profile-comparison}
\end{equation}
In particular, \cref{eq:measure-reciprocal-integrability} holds if and only
if the series in \cref{eq:wiener-profile-series} is finite for
Haar-almost every \(u\).
\end{theorem}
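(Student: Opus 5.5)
The plan is a layer-cake computation followed by a dyadic comparison. Fix \(u\) and set \(X(a):=|a^\top u|\in[0,1]\), so that \(b_\mu(u,t)=\mu\{X\leq t\}\).

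\textbf{Step 1: the integral identity.} For \(x\in(0,1]\) write
\[
  \frac1x=1+\int_x^1 t^{-2}\,\dd t=1+\int_0^1 \boldsymbol 1\{x\leq t\}\,t^{-2}\,\dd t .
\]
For \(x=0\) both sides are \(+\infty\), since the integral of \(t^{-2}\) over \((0,1)\) diverges. Integrate this against \(\mu(\dd a)\) with \(x=X(a)\). The integrand is nonnegative and jointly Borel in \((a,t)\), so Tonelli applies with extended values. This gives
\[
  I_\mu(u)=M_\mu+\int_0^1 \frac{b_\mu(u,t)}{t^2}\,\dd t .
\]
The identity also covers the case \(\mu\{X=0\}>0\). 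There \(b_\mu(u,t)\geq\mu\{X=0\}>0\) for every \(t\), so both sides are infinite.

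\textbf{Step 2: dyadic comparison.} Split \((0,1]\) into the blocks \((2^{-n-1},2^{-n}]\), \(n\geq0\). The profile \(t\mapsto b_\mu(u,t)\) is nondecreasing, and \(t^{-2}\) lies between \(4^{n}\) and \(4^{n+1}\) on the \(n\)-th block, whose length is \(2^{-n-1}\). This yields
\[
  2^{n-1}b_\mu(u,2^{-n-1})\leq\int_{2^{-n-1}}^{2^{-n}}\frac{b_\mu(u,t)}{t^2}\,\dd t\leq 2^{n+1}b_\mu(u,2^{-n}).
\]
Summing the upper bound over \(n\) gives \(\int_0^1\leq 2W_\mu(u)\), which is a factor \(2\) worse than the stated \(I_\mu\leq W_\mu+M_\mu\). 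I would therefore not use block bounds for the upper estimate. Instead I would compare \(1/x\) with a dyadic step function pointwise. For \(x\in(2^{-m-1},2^{-m}]\) we have
\[
  \sum_{n=0}^\infty 2^n\boldsymbol 1\{x\leq 2^{-n}\}=\sum_{n=0}^{m}2^n=2^{m+1}-1,
\]
while \(1/x\in[2^m,2^{m+1})\). Hence
\[
  \sum_n 2^n\boldsymbol 1\{x\leq2^{-n}\}+1\geq \frac1x
  \qquad\text{and}\qquad
  \frac1x\geq 2^m\geq \frac{\sum_n 2^n\boldsymbol 1\{x\leq 2^{-n}\}+1}{2}.
\]
The case \(x=0\) is infinite on both sides. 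Integrating these two pointwise inequalities against \(\mu\), with Tonelli to swap the sum and the integral, gives exactly
\[
  \frac{W_\mu(u)+M_\mu}{2}\leq I_\mu(u)\leq W_\mu(u)+M_\mu .
\]
This pointwise route gives the sharp constants directly, so Step 2 should be written this way rather than via the blocks of Step 1; Step 1 remains only to prove the integral identity.

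\textbf{Step 3: the almost-sure criterion.} Since \(M_\mu<\infty\), the comparison shows that \(I_\mu(u)<\infty\) exactly when \(W_\mu(u)<\infty\), at every \(u\). The two conditions therefore hold on the same set of \(u\), and in particular one holds Haar-almost surely if and only if the other does. This is the final assertion of the statement.

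The main care point is getting the exact constants \(1/2\) and \(1\) in the comparison. The pointwise dyadic sandwich handles this, together with consistent treatment of the pole set \(\{X=0\}\) as an extended value.
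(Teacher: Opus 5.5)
Your proof is correct, and Steps~1 and~3 are the same as the paper's argument. The only difference is Step~2. Your reason for dropping the block decomposition comes from a miscalculation, not from any real weakness of the block method. On each block you can integrate \(t^{-2}\) exactly, \(\int_{2^{-n-1}}^{2^{-n}}t^{-2}\,\dd t=2^n\), and monotonicity of the profile then gives
\[
  2^n b_\mu(u,2^{-n-1})
  \leq\int_{2^{-n-1}}^{2^{-n}}\frac{b_\mu(u,t)}{t^2}\,\dd t
  \leq 2^n b_\mu(u,2^{-n}).
\]
Summing over \(n\geq0\) and using \(b_\mu(u,1)=M_\mu\) yields \(\tfrac12(W_\mu-M_\mu)\leq\int_0^1 b_\mu(u,t)t^{-2}\,\dd t\leq W_\mu\). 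Combined with the integral identity, this is exactly the sharp comparison, and it is the paper's proof. Your factor-\(2\) loss came only from replacing \(t^{-2}\) by its extreme values \(4^n\) and \(4^{n+1}\) on each block instead of integrating it. Your replacement route is nevertheless valid. Set \(S(x)=\sum_{n\geq0}2^n\boldsymbol 1\{x\leq2^{-n}\}\). The count \(S(x)=2^{m+1}-1\) on \((2^{-m-1},2^{-m}]\), together with the \(x=0\) case, gives \(\tfrac12\{S(x)+1\}\leq 1/x\leq S(x)+1\) on \([0,1]\). Tonelli then turns this pointwise sandwich into the comparison. Your version is slightly more self-contained, since it needs neither the integral identity nor the monotonicity of \(b_\mu(u,\cdot)\). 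The paper's version instead presents the dyadic series directly as a discretization of the Dini integral. The two arguments are otherwise equally short.
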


\begin{proof}
For \(0\leq x\leq1\),
\[
  \frac1x
  =
  1+\int_0^1
  \boldsymbol 1\{x\leq t\}\frac{\dd t}{t^2},
\]
with both sides interpreted as infinity when \(x=0\).  Apply this identity
to \(x=|a^\top u|\), integrate in \(a\), and use Tonelli's theorem to
obtain \cref{eq:wiener-profile-integral}.  Monotonicity of
\(t\mapsto b_\mu(u,t)\) gives, on
\((2^{-(n+1)},2^{-n}]\),
\[
  2^n b_\mu(u,2^{-(n+1)})
  \leq
  \int_{2^{-(n+1)}}^{2^{-n}}
  \frac{b_\mu(u,t)}{t^2}\,\dd t
  \leq
  2^n b_\mu(u,2^{-n}).
\]
Sum over \(n\geq0\) and use \(b_\mu(u,1)=M_\mu\).
\end{proof}

\begin{corollary}[Pointwise projection tests]
\label{cor:projection-tests}
At a fixed \(u\), finiteness of \(I_\mu(u)\) forces
\[
  b_\mu(u,t)=o(t)\qquad(t\downarrow0).
\]
It is sufficient that, for some \(C(u)<\infty\) and \(\epsilon(u)>0\),
\[
  b_\mu(u,t)\leq C(u)t^{1+\epsilon(u)}
\]
for all sufficiently small \(t\).  Equivalently,
\cref{eq:wiener-profile-integral} is the Dini condition at zero for the
one-dimensional projection
\((a\mapsto a^\top u)_\#\mu\).
\end{corollary}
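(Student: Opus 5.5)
The statement to prove is \cref{cor:projection-tests}. I would deduce it directly from the exact identity \cref{eq:wiener-profile-integral} of \cref{thm:wiener-incidence}, fixing \(u\) throughout. Write \(b(t)=b_\mu(u,t)\); it is nondecreasing in \(t\) and bounded by \(M_\mu\). By \cref{eq:wiener-profile-integral}, finiteness of \(I_\mu(u)\) is equivalent to
\[
  \int_0^1\frac{b(t)}{t^2}\,\dd t<\infty.
\]
The three assertions then become elementary statements about this one-dimensional integral.

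\emph{Necessity.} Monotonicity gives, for \(0<t\leq1\),
\[
  \int_t^{2t}\frac{b(s)}{s^2}\,\dd s\geq b(t)\int_t^{2t}\frac{\dd s}{s^2}=\frac{b(t)}{2t},
\]
where the range is truncated at \(1\) if necessary; for small \(t\) this is harmless. If the integral is finite, its tails \(\int_t^{2t}\) tend to zero as \(t\downarrow0\), so \(b(t)/t\to0\). This is the same dyadic comparison already used in the proof of \cref{thm:wiener-incidence}.

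\emph{Sufficiency.} Suppose \(b(t)\leq Ct^{1+\epsilon}\) for \(t\leq t_0\). Then
\[
  \int_0^{t_0}\frac{b(t)}{t^2}\,\dd t\leq C\int_0^{t_0}t^{\epsilon-1}\,\dd t<\infty .
\]
On \([t_0,1]\) the integrand is bounded by \(M_\mu/t_0^2\). Hence \(I_\mu(u)<\infty\).

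\emph{Dini reformulation.} Let \(\rho=(a\mapsto a^\top u)_\#\mu\), a finite measure on \([-1,1]\), and let \(F(t)=\rho([-t,t])\). Then \(b(t)=F(t)\), and
\[
  I_\mu(u)=\int\frac{\rho(\dd x)}{|x|}.
\]
By \cref{eq:wiener-profile-integral}, this is \(M_\mu+\int_0^1F(t)t^{-2}\,\dd t\), which is the Dini-type condition \(\int_0 F(t)/t\cdot\dd t/t<\infty\) on the modulus \(F(t)/t\) at zero.

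None of these steps is a real obstacle. The only care needed is to treat an atom of \(\rho\) at \(0\): then \(b(t)\geq\mu\{a^\top u=0\}>0\), so \(b(t)\neq o(t)\) and both sides are infinite, which is consistent with the extended-value convention.
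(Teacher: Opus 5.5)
Your proof is correct and follows essentially the paper's route. The paper argues through the dyadic series \(W_\mu(u)\) and the comparison \cref{eq:wiener-profile-comparison}: the summands \(2^n b_\mu(u,2^{-n})\) must tend to zero, dyadic monotonicity extends this to \(b_\mu(u,t)/t\to0\), and the power bound makes the series geometrically summable. You run the same dyadic-block comparison directly on the integral in \cref{eq:wiener-profile-integral}, and your remarks on the Dini reformulation and on an atom of the projection at zero are, if anything, more explicit than the paper's.
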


\begin{proof}
The summands in \cref{eq:wiener-profile-series} must tend to zero.  Dyadic
monotonicity extends this to \(b_\mu(u,t)/t\to0\).  The power bound makes
the series geometrically summable.
\end{proof}

\begin{corollary}[Exact plane-wise criterion]
\label{cor:plane-wise-wiener}
For the weighted phase measure \(\eta_P\) in the proof of
\cref{prop:measure-conditional-cauchy}, one has, for almost every oriented
plane \(P\),
\[
  I_\mu(U_\phi)=J_{\eta_P}(\phi).
\]
Moreover,
\cref{eq:measure-reciprocal-integrability} holds if and only if
\begin{equation}
  \sum_{n=0}^\infty 2^n
  \eta_P\{\alpha:|\cos(\phi-\alpha)|\leq2^{-n}\}
  <\infty
  \label{eq:plane-wise-wiener}
\end{equation}
for almost every pair \((P,\phi)\).
\end{corollary}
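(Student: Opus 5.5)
The plan is to combine the algebraic identity \eqref{eq:phase-secants-controlled} with the Wiener--Dini dyadic comparison of \cref{thm:wiener-incidence}, transported to the phase circle, and then use Haar-frame disintegration to pass between "almost every \(u\)" and "almost every \((P,\phi)\)".

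\emph{Step 1: the pointwise identity.} By Fubini applied to \(\mu\) and the Grassmannian, as in \eqref{eq:measure-no-orthogonal-mass}, almost every oriented plane \(P\) satisfies \(\mu(P^\perp\cap\sphere^{p-1})=0\). On such a plane the phase map \(\alpha_P\) and the projected radius \(r_P\) of \eqref{eq:measure-phase-map} are defined \(\mu\)-almost everywhere. For \(U_\phi\in P\) we have
\[
  a^\top U_\phi=r_P(a)\cos\{\phi-\alpha_P(a)\}.
\]
Integrating \(1/|a^\top U_\phi|\) against \(\mu\) and rewriting through the definition of \(\eta_P\) in \eqref{eq:weighted-phase-measure} gives \(I_\mu(U_\phi)=J_{\eta_P}(\phi)\) as extended nonnegative integrals. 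This holds for every \(\phi\), with no finiteness assumption, which is the identity already recorded in \eqref{eq:phase-secants-controlled}.

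\emph{Step 2: the dyadic series on the circle.} I would apply the proof of \cref{thm:wiener-incidence} verbatim to the finite or infinite positive measure \(\eta_P\) and the variable \(x=|\cos(\phi-\alpha)|\in[0,1]\). The identity \(1/x=1+\int_0^1\boldsymbol 1\{x\le t\}\,t^{-2}\dd t\) and Tonelli give
\[
  J_{\eta_P}(\phi)=\eta_P(\mathbb T_\pi)+\int_0^1 t^{-2}\,\eta_P\{|\cos(\phi-\alpha)|\le t\}\,\dd t .
\]
Dyadic monotonicity then gives the two-sided comparison \eqref{eq:wiener-profile-comparison}, with \(W\) replaced by the series in \eqref{eq:plane-wise-wiener}. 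The one point needing care is that \(\eta_P(\mathbb T_\pi)\) is the \(n=0\) term of that series. Hence, whether or not \(\eta_P\) is finite, \(J_{\eta_P}(\phi)<\infty\) if and only if the series in \eqref{eq:plane-wise-wiener} is finite.

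\emph{Step 3: the measure transfer.} For this I use the Haar-frame disintegration of \cref{lem:uniform-frame-phase}: if \(P\) is a Haar oriented plane and \(\phi\) is uniform given \(P\), then \(U_\phi\) is Haar on the sphere. Let \(E\) be the set of \(u\) with \(I_\mu(u)=\infty\). Then \(E\) is Borel, as shown in the proof of \cref{thm:measure-beta}, and \(\sigma(E)\) equals the joint probability that \(I_\mu(U_\phi)=\infty\). By Steps 1 and 2, this joint probability is the measure of the set of pairs \((P,\phi)\) at which the series in \eqref{eq:plane-wise-wiener} diverges. Consequently \eqref{eq:measure-reciprocal-integrability} holds if and only if that series converges for almost every pair \((P,\phi)\).

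I expect the main obstacle to be measurability of the map \((P,\phi)\mapsto\) series, so that Fubini applies. I would handle it by choosing measurable positive bases on a countable atlas of \(\operatorname{Gr}^+(2,p)\), as in the proof of \cref{thm:zonal-rigidity}. Each term \(\eta_P\{|\cos(\phi-\alpha)|\le 2^{-n}\}\) equals
\[
  \int \boldsymbol 1\{|a^\top U_\phi|\le 2^{-n} r_P(a)\}\, r_P(a)^{-1}\,\mu(\dd a),
\]
a parameter integral of a jointly Borel kernel, and is therefore Borel in \((P,\phi)\).
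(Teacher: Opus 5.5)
Your proposal is correct and follows essentially the paper's route: the algebraic identity \eqref{eq:phase-secants-controlled}, the dyadic comparison of \cref{thm:wiener-incidence} transported to \(\eta_P\), and Haar-frame disintegration to pass between Haar-almost every \(u\) and almost every \((P,\phi)\). The only difference is in how finiteness of \(\eta_P\) is handled. The paper first shows that \(\eta_P\) is finite for almost every \(P\), via \(\E_P[1/r_P(a)]=\sqrt\pi\,\Gamma(p/2)/\Gamma((p-1)/2)<\infty\) for \(p>2\) and Tonelli, so that the finite-measure Wiener theorem applies verbatim. You instead sidestep finiteness by observing that \(\eta_P(\mathbb T_\pi)\) is the \(n=0\) term of the series, which is equally valid; your explicit measurability check also fills in a detail the paper leaves implicit.
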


\begin{proof}
For a fixed unit vector \(a\), if
\(r_P(a)=\|\operatorname{proj}_P(a)\|\), then
\[
  \E_P\frac1{r_P(a)}
  =
  \frac{\sqrt\pi\,\Gamma(p/2)}{\Gamma((p-1)/2)}
  <\infty
\]
for \(p>2\), while \(r_P(a)=1\) for \(p=2\).  Tonelli's theorem therefore
makes \(\eta_P\) finite for almost every \(P\).  The first identity is
\cref{eq:phase-secants-controlled}.  Apply the dyadic proof of
\cref{thm:wiener-incidence} to the finite measure \(\eta_P\), and then use
Haar-frame disintegration.
\end{proof}

\begin{remark}[Why a first-moment test cannot classify admissibility]
\label{rem:wiener-fubini-obstruction}
Spherical invariance gives, for every \(\mu\),
\[
  \int b_\mu(u,t)\,\sigma(\dd u)
  =
  \Prb\{|U_1|\leq t\}\sim c_p t
  \qquad(t\downarrow0).
\]
Consequently \(\int W_\mu\,\dd\sigma=\infty\) for every \(\mu\), even for
a point mass, although a point mass has \(I_\mu(U)<\infty\) almost surely.
Thus any intrinsic classification beyond
\cref{thm:wiener-incidence} must retain spatial concentration; Fubini at
the first-moment endpoint necessarily loses it.
\end{remark}

\subsection{The sharp entropy threshold}

For discrete arrangements, universal absolute convergence admits an exact
characterization involving only the weights.  For context on almost-sure
convergence and tail asymptotics of infinite randomly weighted sums with
regularly varying summands, compare \cite{hazra-maulik-2012}.  The point
here is the exact entropy criterion with a universal quantifier over every
deterministic spherical arrangement.

\begin{theorem}[Universal convergence if and only if entropy is finite]
\label{thm:entropy-threshold}
Let \(p\geq2\), let \(w_j>0\) with \(\sum_{j\geq1}w_j=1\), and let
\(U\sim\Unif(\sphere^{p-1})\).  The following are equivalent.
\begin{enumerate}[label=\textup{(\roman*)}]
\item The Shannon entropy of the weights is finite:
  \begin{equation}
    \sum_{j\geq1}w_j\log\frac{e}{w_j}<\infty.
    \label{eq:finite-weight-entropy}
  \end{equation}
\item For every deterministic sequence
  \((a_j)_{j\geq1}\subset\sphere^{p-1}\),
  \begin{equation}
    \sum_{j\geq1}\frac{w_j}{|a_j^\top U|}<\infty
    \qquad\text{almost surely}.
    \label{eq:universal-countable-convergence}
  \end{equation}
\end{enumerate}
If \cref{eq:finite-weight-entropy} fails and the directions \(A_j\) are
independent Haar points on \(\sphere^{p-1}\), then for almost every realized
sequence \((A_j)_{j\geq1}\), the series in
\cref{eq:universal-countable-convergence} diverges for Haar-almost every
\(U\).  Thus the entropy condition is sharp even among generic arrangements.
\end{theorem}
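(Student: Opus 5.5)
The plan is a direct truncation argument for (i)\(\Rightarrow\)(ii), and a probabilistic construction for the converse. The construction proves the final sharpness assertion and, with it, (ii)\(\Rightarrow\)(i).

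\textbf{Basic estimate.} For a fixed unit vector \(a\), the variable \(X=a^\top U\) has the law of the first coordinate of a Haar point. Its density is \(c_p(1-x^2)^{(p-3)/2}\) for \(p\geq3\), and the arcsine density for \(p=2\). In all cases the density is bounded near \(0\) and bounded below near \(0\). So there are constants \(0<c\le C<\infty\) depending only on \(p\) such that, for \(0<t\leq 1\),
\[
  c\,t\le \Prb\{|X|\le t\}\le C\,t,
  \qquad
  c\log\frac{e}{t}\le \E\bigl[|X|^{-1}\wedge t^{-1}\bigr]\le C\log\frac{e}{t}.
\]
The truncated-mean bounds follow from the layer-cake formula, or equivalently from \cref{thm:wiener-incidence} applied to a point mass with the integral cut at \(t\).

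\textbf{(i)\(\Rightarrow\)(ii).} Fix any deterministic sequence \((a_j)\) and put \(X_j=a_j^\top U\).
\begin{enumerate}[label=\textup{(\alph*)}]
\item Split each term according to whether \(|X_j|\le w_j\) or \(|X_j|>w_j\).
\item Since \(\sum_j\Prb\{|X_j|\le w_j\}\le C\sum_jw_j<\infty\), the first Borel--Cantelli lemma shows that almost surely only finitely many \(j\) have \(|X_j|\le w_j\). Each of these finitely many terms is finite almost surely, because \(X_j\neq0\) almost surely.
\item The remaining part satisfies
\[
  \E\sum_j \frac{w_j}{|X_j|}\boldsymbol 1\{|X_j|>w_j\}\le C\sum_j w_j\log\frac{e}{w_j}<\infty,
\]
so it is finite almost surely.
\end{enumerate}
No independence between the \(X_j\) is needed here, only the uniform one-dimensional bounds. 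This gives \cref{eq:universal-countable-convergence}.

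\textbf{Sharpness, and hence (ii)\(\Rightarrow\)(i).} Suppose the entropy in \cref{eq:finite-weight-entropy} is infinite, and let \(A_j\) be i.i.d.\ Haar points.
\begin{enumerate}[label=\textup{(\alph*)}]
\item Fix \(u\). The variables \(X_j=A_j^\top u\) are i.i.d.\ with the one-dimensional law above. Put \(Y_j:=(w_j/|X_j|)\wedge1\).
\item These are independent and take values in \([0,1]\), with \(\sum_j\E Y_j\ge c\sum_jw_j\log(e/w_j)=\infty\).
\item For independent \([0,1]\)-valued summands, divergence of the mean series forces almost-sure divergence. This follows from Kolmogorov's three-series theorem. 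Alternatively, \(\operatorname{Var}(\sum_{j\le n}Y_j)\le\sum_{j\le n}\E Y_j\), so Chebyshev gives \(\sum_{j\le n}Y_j\ge\tfrac12\sum_{j\le n}\E Y_j\) with probability tending to one, and the Kolmogorov \(0\)--\(1\) law makes divergence almost sure.
\item Since \(w_j/|X_j|\ge Y_j\), the series \(\sum_j w_j/|A_j^\top u|\) diverges almost surely in \((A_j)\).
\item The divergence event is jointly Borel in \(((A_j),u)\), because it is a countable operation on Borel partial sums. Tonelli/Fubini then yields that, for almost every realization \((A_j)\), the series diverges for Haar-almost every \(U\).
\end{enumerate}
Fixing any such realization as the deterministic sequence violates (ii). This proves (ii)\(\Rightarrow\)(i) and the generic-arrangement sharpness statement simultaneously.

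\textbf{Main obstacle.} The only delicate point is the matching lower bound \(\E[(w/|X|)\wedge1]\gtrsim w\log(e/w)\), uniformly in \(p\). This needs the density of \(a^\top U\) to be bounded below on a fixed neighborhood of \(0\), which holds in every dimension \(p\ge2\). Together with the standard divergence criterion for bounded independent nonnegative sums, this is what makes the entropy threshold two-sided.
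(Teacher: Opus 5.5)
Your proposal is correct and follows essentially the same route as the paper. Both use the two-sided comparison \(\E\min\{w/|a^\top U|,1\}\asymp w\log(e/w)\), a Tonelli first-moment argument with only finitely many large terms for (i)\(\Rightarrow\)(ii), and i.i.d.\ Haar directions plus Fubini for sharpness and hence the converse. The one minor difference is the proof of almost-sure divergence for the independent \([0,1]\)-valued summands. The paper uses Laplace transforms (\(1-e^{-y}\geq(1-e^{-1})\min\{y,1\}\) and \(\prod_j\E e^{-Y_j}\to0\)), whereas you use Chebyshev with \(\operatorname{Var}\leq\) mean, or the three-series theorem; either works equally well.
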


\begin{proof}
Zero weights, if present in an alternative indexing, may simply be
discarded.
Let \(S=|U_1|\).  Its density on \((0,1)\) is
\begin{equation}
  f_p(s)
  =
  \frac{2\Gamma(p/2)}{
    \sqrt\pi\,\Gamma((p-1)/2)}
  (1-s^2)^{(p-3)/2}.
  \label{eq:absolute-coordinate-density}
\end{equation}
For \(0<w\leq1\), put
\[
  m_p(w):=\E\min\left\{\frac{w}{S},1\right\}.
\]
Splitting the integral at \(w\) gives
\begin{equation}
  m_p(w)
  =
  \int_0^w f_p(s)\,\dd s
  +w\int_w^1\frac{f_p(s)}s\,\dd s.
  \label{eq:truncated-reciprocal-moment}
\end{equation}
At \(w=1\), this formula reads \(m_p(1)=1\).  When \(p=2\), the density has
an integrable endpoint singularity at one, so the second integral is still
well defined for every \(0<w<1\).
The density in \cref{eq:absolute-coordinate-density} is bounded above and
away from zero on every sufficiently short interval starting at zero, while
its remaining mass is finite.  Consequently there exist constants
\(0<c_p\leq C_p<\infty\) such that
\begin{equation}
  c_pw\log\frac{e}{w}
  \leq m_p(w)\leq
  C_pw\log\frac{e}{w},
  \qquad 0<w\leq1.
  \label{eq:entropy-comparison}
\end{equation}

Suppose first that \cref{eq:finite-weight-entropy} holds and fix any
deterministic sequence \((a_j)\).  By spherical invariance,
\(|a_j^\top U|\overset{d}{=}S\) for every \(j\).  Tonelli's theorem and
\cref{eq:entropy-comparison} give
\[
  \E\sum_{j\geq1}
  \min\left\{\frac{w_j}{|a_j^\top U|},1\right\}
  =\sum_{j\geq1}m_p(w_j)<\infty.
\]
The sum of the truncated terms is therefore finite almost surely.  Only
finitely many untruncated terms can exceed one, and the countable collection
of poles is Haar-null, proving
\cref{eq:universal-countable-convergence}.

Conversely, suppose \cref{eq:finite-weight-entropy} fails and let
\(A_1,A_2,\ldots\) be independent Haar directions.  Conditional on any
fixed \(u\), the random variables
\[
  Y_j:=\frac{w_j}{|A_j^\top u|}
\]
are independent and satisfy, by \cref{eq:entropy-comparison},
\[
  \sum_{j\geq1}\E\min\{Y_j,1\}=\infty.
\]
For \(y\geq0\),
\begin{equation}
  (1-e^{-1})\min\{y,1\}
  \leq1-e^{-y}\leq\min\{y,1\}.
  \label{eq:laplace-truncation-bound}
\end{equation}
Taking expectations in \cref{eq:laplace-truncation-bound} gives
\[
  1-\E e^{-Y_j}
  \geq(1-e^{-1})\E\min\{Y_j,1\}.
\]
Thus \(\sum_j(1-\E e^{-Y_j})=\infty\).  Independence and
\(1-x\leq e^{-x}\) now give
\[
  \E\exp\left\{-\sum_{j=1}^nY_j\right\}
  =\prod_{j=1}^n\E e^{-Y_j}
  \longrightarrow0.
\]
Dominated convergence then gives
\(\E\exp\{-\sum_{j\geq1}Y_j\}=0\), which forces
\(\sum_jY_j=\infty\) almost surely for each fixed \(u\).  Joint measurability
and Fubini's theorem show that this
divergence holds for Haar-almost every \(u\) for almost every sampled
sequence \((A_j)\).  Fixing one such sequence contradicts \textup{(ii)}, and
also proves the final assertion.
\end{proof}

\begin{corollary}[Countable incidence law]
\label{cor:countable-beta}
Under \cref{eq:finite-weight-entropy}, every deterministic sequence
\((a_j)_{j\geq1}\subset\sphere^{p-1}\) defines, for Haar-almost every \(u\),
the absolutely convergent field
\begin{equation}
  g_{a,w}(u)
  :=\sum_{j\geq1}w_j\frac{a_j}{a_j^\top u}.
  \label{eq:countable-field}
\end{equation}
It satisfies \(u^\top g_{a,w}(u)=1\), and
\begin{equation}
  \frac1{\|g_{a,w}(U)\|^2}
  \sim\Beta\!\left(\frac12,\frac{p-1}{2}\right).
  \label{eq:countable-beta}
\end{equation}
\end{corollary}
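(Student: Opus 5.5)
The plan is to reduce \cref{cor:countable-beta} to \cref{thm:measure-beta} by packaging the countable arrangement as an atomic probability measure on the sphere. Set
\[
  \mu:=\sum_{j\geq1}w_j\delta_{a_j}.
\]
This is a Borel probability measure on \(\sphere^{p-1}\), because the weights are positive and sum to one. Repeated directions are allowed: \(\mu\) simply merges their weights, and both the integral below and the field \(g_{a,w}\) are unchanged by this merging. For this \(\mu\) the reciprocal integral is
\[
  I_\mu(u)=\sum_{j\geq1}\frac{w_j}{|a_j^\top u|},
\]
and the Bochner integral \(g_\mu(u)\) in \cref{eq:measure-reciprocal-field} is exactly the series in \cref{eq:countable-field} at every \(u\) where \(I_\mu(u)<\infty\). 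In particular, absolute convergence of that vector series follows termwise from finiteness of the scalar series, since \(\|a_j\|=1\).

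The integrability hypothesis \cref{eq:measure-reciprocal-integrability} is supplied by \cref{thm:entropy-threshold}. Under \cref{eq:finite-weight-entropy}, the implication (i)\(\Rightarrow\)(ii) gives \(I_\mu(U)<\infty\) almost surely for the fixed deterministic sequence \((a_j)\). Hence the field is absolutely convergent for Haar-almost every \(u\).

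Now apply \cref{thm:measure-beta}. It gives the Euler pairing \(u^\top g_{a,w}(u)=1\) wherever the field is defined; this can also be seen directly, since \(a_j^\top u\neq0\) for every \(j\) there and the weights sum to one. It also gives the law \cref{eq:countable-beta}.

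The only step needing care is checking that the atomic measure meets the hypotheses of \cref{thm:measure-beta} exactly as stated, including Borel measurability and the null set of poles \(\bigcup_j\{a_j^\top u=0\}\). This is routine, since a countable union of great spheres is Haar-null. So I expect no real obstacle: the substance lies entirely in the entropy theorem and the measure theorem.
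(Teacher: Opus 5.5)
Your proposal is correct and is the paper's proof. The paper likewise applies \cref{thm:measure-beta} to the atomic probability measure \(\mu=\sum_{j\geq1}w_j\delta_{a_j}\), and it obtains the required almost-sure reciprocal integrability from the implication (i)\(\Rightarrow\)(ii) of \cref{thm:entropy-threshold}.
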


\begin{proof}
Apply \cref{thm:measure-beta,thm:entropy-threshold} to the probability
measure \(\mu=\sum_{j\geq1}w_j\delta_{a_j}\).
\end{proof}

\subsection{Entropy--geometry gluing}

\begin{theorem}[Clustered integrability criterion]
\label{thm:clustered-integrability}
Suppose
\[
  \mu=w_0\mu_0+\sum_{j\geq1}w_j\mu_j,
  \qquad
  w_j\geq0,\quad \sum_{j\geq0}w_j=1,
\]
where each \(\mu_j\) is supported on a compact set \(K_j\) whose
orthogonality shadow is Haar-null.  Assume that every \(\mu_j\) is a Borel
probability measure.  For \(j\geq1\), suppose further that
\[
  K_j\subset B(a_j,\rho_j),
  \qquad
  0<\rho_j\leq\frac14,
\]
where the balls use chordal distance, and that
\begin{equation}
  \sum_{j\geq1}\rho_j<\infty,
  \qquad
  \sum_{j\geq1}w_j\log\frac{e}{\rho_j}<\infty.
  \label{eq:clustered-summability}
\end{equation}
Then \(I_\mu(U)<\infty\) almost surely, and consequently \(\mu\) satisfies
the incidence law in \cref{thm:measure-beta}.
\end{theorem}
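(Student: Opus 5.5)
The plan is to bound \(I_\mu\) term by term, using linearity of the extended integral. Since all integrands are nonnegative, Tonelli gives
\[
  I_\mu(u)=w_0I_{\mu_0}(u)+\sum_{j\geq1}w_jI_{\mu_j}(u)
\]
for every \(u\), with values in \([0,\infty]\). It therefore suffices to show that the right side is finite for Haar-almost every \(u\). Once that holds, \cref{thm:measure-beta} applies directly and gives the incidence law.

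The first step handles each compact cluster separately. Let \(\mathcal S_j:=\{u:\exists a\in K_j,\ a^\top u=0\}\) be the orthogonality shadow of \(K_j\). If \(u\notin\mathcal S_j\), then the function \(a\mapsto|a^\top u|\) is continuous and strictly positive on the compact set \(K_j\). It therefore has a positive minimum \(m_j(u)\), and \(I_{\mu_j}(u)\leq1/m_j(u)<\infty\). The countable union \(\bigcup_{j\geq0}\mathcal S_j\) is Haar-null by hypothesis. Hence, off a null set, every single term \(I_{\mu_j}(u)\), including \(I_{\mu_0}(u)\), is finite. What remains is summability of the tail in \(j\).

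For the tail, I would compare each cluster with its centre using the chordal bound. For \(a\in K_j\), \(\bigl||a^\top u|-|a_j^\top u|\bigr|\leq\|a-a_j\|<\rho_j\). Define the good event
\[
  G_j:=\{|a_j^\top U|\geq2\rho_j\}.
\]
On \(G_j\), every \(a\in K_j\) satisfies \(|a^\top U|\geq|a_j^\top U|/2\), so \(I_{\mu_j}(U)\leq2/|a_j^\top U|\). By spherical invariance, \(|a_j^\top U|\overset d=S\), with the density \(f_p\) from \cref{eq:absolute-coordinate-density}. Two estimates then follow:
\begin{itemize}
\item \(\Prb(G_j^c)=\Prb\{S<2\rho_j\}\leq C_p\rho_j\), which is summable. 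By Borel--Cantelli, almost surely only finitely many \(G_j^c\) occur, and those finitely many indices contribute finitely by the first step.
\item \(\E\bigl[S^{-1}\boldsymbol 1\{S\geq2\rho_j\}\bigr]\leq C_p'\log(e/\rho_j)\), using the boundedness of \(f_p\) near zero exactly as in \cref{eq:entropy-comparison}. The restriction \(\rho_j\leq1/4\) keeps \(2\rho_j\leq1/2\) inside the region where this estimate holds. Tonelli then gives
\[
  \E\sum_{j\geq1}w_j\frac{2}{|a_j^\top U|}\boldsymbol 1_{G_j}\leq2C_p'\sum_{j\geq1}w_j\log\frac e{\rho_j}<\infty,
\]
so this sum is finite almost surely.
\end{itemize}

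Combining the two steps, \(\sum_jw_jI_{\mu_j}(U)<\infty\) almost surely, and \(w_0I_{\mu_0}(U)<\infty\) almost surely. The only point needing care is the split between good and bad indices, which I expect to be the main obstacle. The first condition in \cref{eq:clustered-summability} is exactly what makes the bad set finite through Borel--Cantelli. The entropy-type second condition replaces the Shannon entropy of \cref{thm:entropy-threshold} by the geometric scale \(\log(e/\rho_j)\) on the good set. Measurability of the events and integrands is routine, since the kernels are jointly Borel as in the proof of \cref{thm:measure-beta}.
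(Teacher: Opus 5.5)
Your proposal is correct and follows essentially the same route as the paper. The paper also uses the near/far split at \(|a_j^\top U|\) versus \(2\rho_j\), Borel--Cantelli from \(\sum_j\rho_j<\infty\), the bound \(I_{\mu_j}\leq 2/|a_j^\top U|\) on the far event combined with \(\E\{S^{-1}\boldsymbol 1_{\{S\geq 2\rho\}}\}\leq C_p\log(e/\rho)\) and Tonelli, and the countable union of null shadows to handle \(\mu_0\) and the finitely many near clusters; your explicit Tonelli splitting of \(I_\mu\) into the cluster integrals only spells out a step the paper leaves implicit.
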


\begin{proof}
The restriction \(\rho_j\leq1/4\) keeps both the belt estimate and the
truncated logarithmic moment below in their uniform small-radius range.
Let
\[
  E_j:=\{|a_j^\top U|\leq2\rho_j\}.
\]
The spherical coordinate density gives
\(\Prb(E_j)\leq C_p\rho_j\).  The first condition in
\cref{eq:clustered-summability} and the first Borel--Cantelli lemma show
that almost surely only finitely many \(E_j\) occur.  On \(E_j^c\), every
\(a\in K_j\) satisfies
\[
  |a^\top U|
  \geq |a_j^\top U|-\rho_j
  \geq\frac12|a_j^\top U|,
\]
and hence
\[
  I_{\mu_j}(U)\boldsymbol 1_{E_j^c}
  \leq
  \frac{2}{|a_j^\top U|}\boldsymbol 1_{E_j^c}.
\]
If \(S=|U_1|\), then uniformly for \(0<\rho\leq1/4\),
\[
  \E\left\{\frac1S\boldsymbol 1_{\{S>2\rho\}}\right\}
  \leq C_p\log\frac e\rho.
\]
Tonelli's theorem and the second condition in
\cref{eq:clustered-summability} therefore make the sum of all far-cluster
contributions finite almost surely.

Outside the countable union of the null shadows
\(\mathcal O(K_j)\), every individual \(I_{\mu_j}(U)\) is finite.
The \(w_0\mu_0\) term and the finitely many near-cluster terms are therefore
finite as well.  This proves \(I_\mu(U)<\infty\).
\end{proof}

\begin{remark}
The criterion combines two endpoints.  Taking
\(\mu_j=\delta_{a_j}\) and \(\rho_j=w_j/4\) recovers the sufficient half of
the Shannon-entropy theorem.  Taking only the \(w_0\mu_0\) term recovers a
compact null-shadow measure.  More importantly, the \(\mu_j\)'s may
themselves contain arbitrarily complicated countable or nonatomic
arrangements: entropy is charged only at the scale of separated clusters.
\end{remark}

\subsection{Compact supports and logarithmic potentials}

For a compact set \(K\subset\sphere^{p-1}\), define its spherical
orthogonality shadow by
\begin{equation}
  \mathcal O(K)
  :=
  \{u\in\sphere^{p-1}:a^\top u=0
    \text{ for some }a\in K\}.
  \label{eq:orthogonality-shadow}
\end{equation}

\begin{corollary}[Null orthogonality shadow]
\label{cor:null-shadow}
Let \(\mu\) be a Borel probability measure supported on a compact set
\(K\subset\sphere^{p-1}\).  If \(\mathcal O(K)\) is Haar-null, then the
conclusion of \cref{thm:measure-beta} holds.

Moreover, on the open cone
\begin{equation}
  \Omega_K
  :=
  \{x\in\R^p:\min_{a\in K}|a^\top x|>0\},
  \label{eq:shadow-free-cone}
\end{equation}
the continuous master function
\begin{equation}
  \Phi_\mu(x)
  :=
  \exp\left\{
    \int_K\log|a^\top x|\,\mu(\dd a)
  \right\}
  \label{eq:continuous-master-function}
\end{equation}
is smooth and positively homogeneous of degree one, and
\begin{equation}
  \nabla\log\Phi_\mu(x)=g_\mu(x).
  \label{eq:continuous-log-gradient}
\end{equation}
Consequently, for Haar-almost every \(u\), \(H_\mu(u)\) is both the squared
incidence cosine and the squared distance from the origin to the tangent
hyperplane of the level hypersurface of \(\Phi_\mu\) through \(u\).
\end{corollary}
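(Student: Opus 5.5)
The plan is to deduce the first assertion from \cref{thm:measure-beta} by checking reciprocal integrability off the shadow, and then to handle the master function by differentiating under the integral on compact subsets of \(\Omega_K\).

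\emph{Step 1: integrability off the shadow.}
Fix \(u\notin\mathcal O(K)\). The map \(a\mapsto|a^\top u|\) is continuous on the compact set \(K\) and never vanishes there. Hence
\[
  \delta(u):=\min_{a\in K}|a^\top u|>0,
  \qquad
  I_\mu(u)\leq\delta(u)^{-1}<\infty .
\]
Since \(\mathcal O(K)\) is Haar-null, the hypothesis \cref{eq:measure-reciprocal-integrability} holds, and \cref{thm:measure-beta} gives the Beta law. For this step to make sense, \(\mathcal O(K)\) must be measurable. It is closed, because it is the projection onto the \(u\)-factor of the compact set
\[
  \{(a,u)\in K\times\sphere^{p-1}:a^\top u=0\}.
\]
The same remark shows that \(\Omega_K\) is open: the map \(x\mapsto\min_{a\in K}|a^\top x|\) is continuous, since it is \(1\)-Lipschitz. \(\Omega_K\) is also a cone, and \(\Omega_K\cap\sphere^{p-1}=\sphere^{p-1}\setminus\mathcal O(K)\).

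\emph{Step 2: smoothness of the master function.}
Fix \(x_0\in\Omega_K\) with \(\delta_0:=\min_{a\in K}|a^\top x_0|\). On the ball \(\|x-x_0\|<\delta_0/2\) we have \(|a^\top x|\geq\delta_0/2\) uniformly in \(a\in K\), because \(\|a\|=1\). On this ball, \(\log|a^\top x|\) is smooth in \(x\), with
\[
  \partial^\beta\log|a^\top x|
  =c_\beta\,\frac{a^{\beta}}{(a^\top x)^{|\beta|}},
  \qquad |\beta|\geq1,
\]
so every derivative is bounded by \(C_\beta(\delta_0/2)^{-|\beta|}\) uniformly in \(a\). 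The function itself is bounded on compact pieces. Since \(\mu\) is a probability measure, repeated differentiation under the integral sign by dominated convergence shows that
\[
  L(x):=\int_K\log|a^\top x|\,\mu(\dd a)
\]
is \(C^\infty\), and that
\[
  \nabla L(x)=\int_K\frac{a}{a^\top x}\,\mu(\dd a)=g_\mu(x).
\]
Thus \(\Phi_\mu=e^L\) is smooth and \cref{eq:continuous-log-gradient} holds. Homogeneity follows from \(L(tx)=\log t+L(x)\) for \(t>0\), using \(\mu(K)=1\).

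\emph{Step 3: geometric interpretation.}
Take \(u\in\sphere^{p-1}\setminus\mathcal O(K)\), which is a Haar-full set. Euler's identity for a degree-one homogeneous function, or directly \cref{eq:measure-euler-pairing}, gives \(u^\top g_\mu(u)=1\). The level set \(\{\Phi_\mu=\Phi_\mu(u)\}\) is a smooth hypersurface near \(u\), because the gradient \(\Phi_\mu(u)g_\mu(u)\) is nonzero. Its tangent hyperplane at \(u\) is \(\{x:g_\mu(u)^\top x=1\}\), exactly as in \cref{eq:tangent-hyperplane}. The origin lies at distance \(1/\|g_\mu(u)\|\) from it, and this square is \(H_\mu(u)=\{u^\top N_\mu(u)\}^2\), by the argument of \cref{eq:tangent-distance}.

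The only step requiring care is the uniform domination in Step 2. It is provided by compactness of \(K\) together with the positive distance from \(x_0\) to the shadow cone, so I expect no real obstacle.
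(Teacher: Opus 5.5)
Your proposal is correct and follows essentially the same route as the paper: compactness of \(K\) gives \(I_\mu(u)\leq\delta_K(u)^{-1}\) off the shadow, a uniform lower bound on \(|a^\top x|\) near each \(x_0\in\Omega_K\) justifies differentiating under the integral to every order, and Euler's identity identifies the tangent hyperplane as \(\{x:g_\mu(u)^\top x=1\}\). Your additional checks fill in details the paper leaves implicit: closedness of \(\mathcal O(K)\), openness of \(\Omega_K\) via the \(1\)-Lipschitz minimum, and the explicit derivative bounds.
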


\begin{proof}
For \(u\notin\mathcal O(K)\), compactness gives
\[
  \delta_K(u):=\min_{a\in K}|a^\top u|>0.
\]
It follows that \(I_\mu(u)\leq\delta_K(u)^{-1}<\infty\), so
\cref{thm:measure-beta} applies.

For \(x_0\in\Omega_K\), the same compactness argument gives a neighborhood
of \(x_0\) on which all denominators are uniformly bounded away from zero.
Differentiation under the integral in
\cref{eq:continuous-master-function} is therefore valid to every order and
gives \cref{eq:continuous-log-gradient}.  Since \(\mu\) is a probability
measure,
\[
  \Phi_\mu(tx)=t\Phi_\mu(x),
  \qquad t>0.
\]
Euler's identity gives \(x^\top g_\mu(x)=1\).  Thus the tangent hyperplane at
\(u\) is
\[
  \{x:g_\mu(u)^\top x=1\},
\]
whose squared distance from the origin is \(1/\|g_\mu(u)\|^2\).
\end{proof}

\begin{corollary}[Zero-length supports at the critical endpoint]
\label{cor:subcritical-support}
Under the notation of \cref{cor:null-shadow}, if
\begin{equation}
  \mathcal H^1(K)=0,
  \label{eq:subcritical-support}
\end{equation}
then \(\mathcal O(K)\) is Haar-null and all conclusions of
\cref{cor:null-shadow} hold.
\end{corollary}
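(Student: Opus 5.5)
We need to show that if \(\mathcal H^1(K)=0\), then \(\mathcal O(K)\) is Haar-null; everything else then follows from \cref{cor:null-shadow}. We use the incidence structure: \(\mathcal O(K)\) is the projection to the second factor of
\[
  \mathcal I_K:=\{(a,u)\in K\times\sphere^{p-1}:a^\top u=0\}.
\]
Over each \(a\) the fiber is the great sphere \(a^\perp\cap\sphere^{p-1}\), of dimension \(p-2\). If \(\mathcal H^1(K)=0\), a direct covering argument shows \(\mathcal H^{p-1}(\mathcal O(K))=0\).

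\emph{Covering step.} Fix \(\varepsilon>0\) and cover \(K\) by chordal balls \(B(a_i,r_i)\) with \(r_i\le\varepsilon\) and \(\sum_i r_i<\varepsilon\). This is possible because \(\mathcal H^1(K)=0\); compactness lets us take a finite cover, although finiteness is not needed. For \(a\in B(a_i,r_i)\) and \(u\perp a\), we have \(|a_i^\top u|\le |a_i-a|\le r_i\). Hence
\[
  \mathcal O(K)\subset\bigcup_i\{u:|a_i^\top u|\le r_i\}.
\]
Each set on the right is a belt around the great sphere \(a_i^\perp\). By the spherical coordinate density \cref{eq:absolute-coordinate-density}, its Haar measure is \(\Prb\{|U_1|\le r_i\}\le C_p r_i\). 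This holds for \(p\ge3\); when \(p=2\) the density has an integrable singularity only at \(s=1\), not near \(0\), so the bound \(C_pr_i\) still holds for \(r_i\le1/2\).

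\emph{Conclusion.} Summing the belt bounds gives \(\sigma(\mathcal O(K))\le C_p\sum_ir_i<C_p\varepsilon\), and since \(\varepsilon\) is arbitrary, \(\mathcal O(K)\) is Haar-null. Note that \(\mathcal O(K)\) is closed because \(K\) is compact, so it is Borel and its Haar measure is defined. \cref{cor:null-shadow} then applies verbatim.

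\emph{Where the difficulty lies.} The argument uses no Hausdorff-dimension projection theorem: zero length converts directly into summable belt widths, and a belt of width \(r\) has Haar measure \(O(r)\) uniformly in its axis. The only points needing care are this uniformity of the belt estimate, including the \(p=2\) endpoint behaviour of the density, and the measurability of \(\mathcal O(K)\), which is why closedness is recorded. These are the steps I would check carefully, but I expect neither to cause real trouble.
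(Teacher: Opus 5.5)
Your proof is correct and follows essentially the same route as the paper: cover \(K\) by chordal balls with \(\sum_i r_i<\epsilon\), place \(\mathcal O(K)\) inside the union of the belts \(\{u:|a_i^\top u|\le r_i\}\), and use the uniform belt bound \(C_p r_i\). Your extra remarks, that \(\mathcal O(K)\) is closed and that the \(p=2\) density still gives the bound for small \(r_i\), are harmless refinements that the paper leaves implicit.
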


\begin{proof}
For every \(\epsilon>0\), cover \(K\) by chordal balls
\(B(a_i,r_i)\) with arbitrarily small radii and
\(\sum_i r_i<\epsilon\).  If \(u\in\mathcal O(K)\), choose
\(a\in K\) with \(a^\top u=0\) and then an \(i\) with
\(\|a-a_i\|<r_i\).  It follows that
\[
  |a_i^\top u|\leq\|a_i-a\|<r_i.
\]
The Haar measure of a spherical belt
\(\{u:|a_i^\top u|<r_i\}\) is at most \(C_pr_i\), uniformly for small
\(r_i\).  Therefore
\[
  \sigma\{\mathcal O(K)\}\leq C_p\sum_i r_i<C_p\epsilon.
\]
Letting \(\epsilon\downarrow0\) proves the claim.
\end{proof}

\begin{theorem}[Rectifiable critical obstruction]
\label{thm:rectifiable-obstruction}
Let \(\gamma:J\to\sphere^{p-1}\) be a nonconstant regular \(C^1\)
immersion of an interval.  Suppose that \(\mu\) has a nonzero
component along this arc: for some \(h\geq0\), positive on a set of positive
Lebesgue measure,
\[
  \mu\geq\gamma_\#\{h(s)\,\dd s\}.
\]
Then
\begin{equation}
  \sigma\{u:I_\mu(u)=\infty\}>0.
  \label{eq:rectifiable-divergence}
\end{equation}
\end{theorem}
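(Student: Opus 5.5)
The plan is to localize at a Lebesgue density point of the arc density and to show that every direction orthogonal to the arc there, crossing it transversally, produces a non-integrable $1/|s-s^*|$ singularity. I then check that these directions fill a set of positive Haar measure. Since $\mu\geq\gamma_\#\{h(s)\,\dd s\}$ and the kernel is nonnegative,
\[
  I_\mu(u)\;\geq\;\int_J\frac{h(s)}{|\gamma(s)^\top u|}\,\dd s ,
\]
so it suffices to prove that the right side is infinite on a set of positive $\sigma$-measure.

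\textbf{Step 1 (localization).} Choose $c>0$ such that $E:=\{s:h(s)\geq c\}$ has positive Lebesgue measure. Pass to a compact subinterval $J_0\subset J$ on which $\gamma$ is a $C^1$ embedding with $|\gamma'|$ bounded above and below, and on which $|E\cap J_0|>0$; injectivity is not essential, but it simplifies the exposition. Let $E^*\subset E\cap J_0$ be the set of interior Lebesgue density points of $E$. It has positive measure.

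\textbf{Step 2 (pointwise divergence).} Let $s^*\in E^*$ and let $u$ satisfy $\gamma(s^*)^\top u=0$ and $\gamma'(s^*)^\top u\neq0$. By $C^1$ regularity, $|\gamma(s)^\top u|\leq C|s-s^*|$ for $s$ near $s^*$. Hence
\[
  \int\frac{h(s)}{|\gamma(s)^\top u|}\,\dd s
  \;\geq\;\frac cC\int_{E\cap(s^*-\delta,s^*+\delta)}\frac{\dd s}{|s-s^*|}
  \;=\;\infty .
\]
The divergence follows by the same layer-cake argument as in \cref{lem:secant-singularity}: the mass of $E$ in the interval of radius $t$ is asymptotic to $2t$ at a density point, and integrating against $t^{-2}\,\dd t$ diverges.

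\textbf{Step 3 (positive measure of the divergence set).} This is the main step. Consider the incidence manifold
\[
  M:=\{(s,u)\in J_0^\circ\times\sphere^{p-1}:\gamma(s)^\top u=0\}.
\]
It is a $C^1$ manifold of dimension $p-1$, because $\gamma(s)\neq0$ makes the defining equation a submersion in $u$. Let $\pi(s,u)=u$ be the projection. The tangent space of $M$ consists of the pairs $(\dot s,\dot u)$ with $\dot s\,\gamma'(s)^\top u+\gamma(s)^\top\dot u=0$ and $\dot u\perp u$. From this description, $d\pi$ is an isomorphism exactly when $\gamma'(s)^\top u\neq0$. So on the open set
\[
  M':=\{(s,u)\in M:\gamma'(s)^\top u\neq0\}
\]
the map $\pi$ is a local $C^1$ diffeomorphism onto an open subset of the sphere.

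For each fixed $s$, the fiber of $M$ is the great subsphere $\gamma(s)^\perp\cap\sphere^{p-1}$. The part of that fiber removed in passing to $M'$ is a lower-dimensional great subsphere; when $p=2$ the fiber is two points, and $\gamma'\perp\gamma$ together with regularity keeps them in $M'$. Hence the set $M'_{E^*}:=\{(s,u)\in M':s\in E^*\}$ has positive $(p-1)$-dimensional measure, by Fubini over $s$ in local product charts of $M$. A local diffeomorphism maps sets of positive measure to sets of positive measure, so $\pi(M'_{E^*})$ has positive $\sigma$-measure. Measurability is handled by working with a compact subset of $E^*$, whose image is compact. By Step 2, $I_\mu=\infty$ on $\pi(M'_{E^*})$, which proves \cref{eq:rectifiable-divergence}.

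The only delicate point I expect is the Jacobian computation in Step 3, namely verifying that transversality $\gamma'(s)^\top u\neq0$ is exactly the nondegeneracy condition for $\pi$. That is a direct linear-algebra check on the tangent space of $M$ described above.
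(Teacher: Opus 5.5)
Your proof is correct and follows the paper's argument: you use the incidence manifold $\{(s,u):\gamma(s)^\top u=0\}$, the fact that the projection to $u$ has full rank exactly at transverse pairs $\gamma'(s)^\top u\neq0$, the resulting positive-measure image, and pointwise divergence from $|\gamma(t)^\top u|\leq C|t-s|$ at a density point. The differences are cosmetic. You work with density points of a superlevel set $\{h\geq c\}$ rather than Lebesgue points of $h$, which avoids any appeal to local integrability of $h$. You also use the inverse function theorem with Fubini in local product charts where the paper uses the coarea and area formulas.
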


\begin{proof}
Let \(E\) be the positive-measure set of Lebesgue points \(s\) at which
\(h(s)>0\).  Localizing if necessary, choose a compact subinterval on
which \(E\) still has positive measure and \(\gamma\) is Lipschitz.
Consider the incidence manifold
\[
  \mathcal M
  :=
  \{(s,u)\in J\times\sphere^{p-1}:\gamma(s)^\top u=0\}.
\]
The coarea formula in local incidence charts shows that the part of
\(\mathcal M\) above \(E\) has positive \((p-1)\)-dimensional measure.
For each \(s\), the exceptional fiber on which
\(\gamma'(s)^\top u=0\) has codimension one inside
\(\{u:\gamma(s)^\top u=0\}\).  Thus the part of \(\mathcal M\) above
\(E\) where \(\gamma'(s)^\top u\neq0\) has positive
\((p-1)\)-dimensional measure.  At precisely those transverse pairs, the
projection \((s,u)\mapsto u\) has full rank.  The area formula
\cite{mattila-1995}, applied on the compact localized chart where the
projection is Lipschitz, therefore shows that its image has positive
spherical Haar measure.  Injectivity of \(\gamma\) is not needed: the area
formula counts multiplicity.

For every \(u\) in that image, choose a transverse \(s\in E\).  Locally,
\[
  |\gamma(t)^\top u|\leq C|t-s|.
\]
Since \(s\) is a positive Lebesgue point of \(h\), the integral
\(\int_{|t-s|<\delta} h(t)|t-s|^{-1}\dd t\) diverges; for example, each sufficiently
small dyadic annulus contributes an amount bounded away from zero.
Consequently
\[
  I_\mu(u)
  \geq
  \int_{|t-s|<\delta}
  \frac{h(t)}{|\gamma(t)^\top u|}\,\dd t
  =\infty,
\]
which proves \cref{eq:rectifiable-divergence}.
\end{proof}

\begin{remark}[Both behaviors occur in dimension one]
\label{rem:critical-support}
Since \(\dim_{\mathrm H}K<1\) implies \(\mathcal H^1(K)=0\),
\cref{cor:subcritical-support} strictly strengthens the earlier
subcritical dimension criterion.  It also reaches dimension one: a Moran
Cantor set in an arc with \(2^n\) stage-\(n\) intervals of length
\(2^{-n}/n\) has Hausdorff dimension one and zero \(\mathcal H^1\)-measure,
so every probability measure on it is admissible.  In the opposite
direction, \cref{thm:rectifiable-obstruction} rules out every nonzero
absolutely continuous component on a regular arc.

For the most symmetric negative example, let
\(L\subset\R^p\) be a two-dimensional subspace and let
\[
  K=L\cap\sphere^{p-1}\cong\sphere^1.
\]
Then \(\dim_{\mathrm H}K=1\), but
\(\mathcal O(K)=\sphere^{p-1}\): every vector has a direction in \(L\)
orthogonal to its projection onto \(L\).

If \(\mu\) is normalized arc measure on \(K\), then
\(I_\mu(u)=\infty\) for every \(u\).  Indeed, when
\(b=\operatorname{proj}_L u\neq0\), a rotation within \(L\) gives
\[
  I_\mu(u)
  =
  \frac{1}{2\pi\|b\|}
  \int_0^{2\pi}\frac{\dd\theta}{|\cos\theta|}
  =\infty,
\]
while \(b=0\) makes every denominator vanish.  Thus ordinary absolute
integration cannot be replaced silently by principal values.

Although the shadow-free cone \(\Omega_K\) is empty in this example, the
logarithmic integral remains finite on the larger set
\(\{x:\operatorname{proj}_Lx\neq0\}\) and satisfies
\[
  \Phi_\mu(x)=\frac12\|\operatorname{proj}_Lx\|
\]
when \(\operatorname{proj}_Lx\neq0\).  Its natural symmetric
principal-value logarithmic gradient is
\[
  g_\mu^{\mathrm{pv}}(x)
  =
  \frac{\operatorname{proj}_Lx}
       {\|\operatorname{proj}_Lx\|^2}.
\]
It would give
\[
  \frac{1}{\|g_\mu^{\mathrm{pv}}(U)\|^2}
  =
  \|\operatorname{proj}_LU\|^2,
\]
which has law
\(\Beta(1,(p-2)/2)\) for \(p>2\), and is identically one for \(p=2\).
Neither is the law in \cref{eq:measure-beta}.  Absolute reciprocal
integrability is therefore structural, not merely a convenient way of
choosing a version of the field.
\end{remark}

\begin{remark}[Singularity and full support do not decide convergence]
\label{rem:full-support-counterexamples}
Choose a dense sequence of centers \(a_j\), radii
\(\rho_j=2^{-2j}\), compact Cantor sets
\(a_j\in K_j\subset B(a_j,\rho_j)\) with
\(\mathcal H^1(K_j)=0\), nonatomic
probabilities \(\mu_j\) with \(\operatorname{supp}\mu_j=K_j\), and weights
\(w_j=2^{-j}\).
Set \(\mu:=\sum_{j\geq1}w_j\mu_j\).
Then \cref{thm:clustered-integrability} makes \(\mu\) admissible.
The dense centers give full support, while the countable union of the
zero-\(\mathcal H^1\) carriers shows that \(\mu\) is singular; it is
nonatomic because every \(\mu_j\) is.

In the other direction, choose weights of infinite Shannon entropy and a
Haar-generic sequence of atoms.  Almost surely the atoms are dense, while
\cref{thm:entropy-threshold} gives \(I_\mu(U)=\infty\) almost everywhere.
This measure is atomic, singular, and also has full support.  Thus neither
singularity nor topological support, separately or together, characterizes
admissibility.
\end{remark}

\section{Equivalent forms and geometric interpretation}
\label{sec:equivalent-forms}

The universal law has several useful reformulations.  We record only those
that clarify the incidence mechanism.

\subsection{A coordinate Dirichlet identity}

Take \(k=p\) and \(a_j=e_j\).  If
\[
  S_j=U_j^2,
  \qquad
  (S_1,\ldots,S_p)\sim
  \Dir\!\left(\frac12,\ldots,\frac12\right),
\]
then
\[
  \|g_{a,w}(U)\|^2
  =
  \sum_{j=1}^p\frac{w_j^2}{S_j}.
\]
The finite incidence theorem therefore gives the following exact
weighted-harmonic-mean identity.

\begin{corollary}[Dirichlet reciprocal identity]
\label{cor:dirichlet-reciprocal}
For every simplex vector \(w\),
\[
  \left(\sum_{j=1}^p\frac{w_j^2}{S_j}\right)^{-1}
  \sim
  \Beta\!\left(\frac12,\frac{p-1}{2}\right).
\]
\end{corollary}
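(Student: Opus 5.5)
The plan is to specialize \cref{thm:universal-beta} to the coordinate arrangement \(k=p\), \(a_j=e_j\), with the given simplex weights \(w\), and then to identify the resulting incidence variable with the displayed Dirichlet expression. No new analysis should be needed; the corollary is a change of variables.

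First, I check that the arrangement is admissible. Each \(e_j\) is nonzero, so the conditions of \cref{thm:universal-beta} hold. Zero weights are discarded as in \cref{sec:edge-cases}. For such indices I interpret \(w_j^2/S_j\) as zero, which is harmless because \(S_j>0\) almost surely. The theorem imposes no spanning or general-position hypothesis, so nothing further needs checking.

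Second, I compute the field. Since \(e_j^\top U=U_j\), we get \(g_{a,w}(U)=\sum_j w_j e_j/U_j\). Orthogonality of the coordinate vectors then gives
\[
  \|g_{a,w}(U)\|^2=\sum_{j=1}^p\frac{w_j^2}{U_j^2}.
\]
This holds almost surely, off the null exceptional set \(\mathcal H\). Consequently \(H_{a,w}(U)=\bigl(\sum_j w_j^2/U_j^2\bigr)^{-1}\), and \cref{thm:universal-beta} shows that this has the \(\Beta(1/2,(p-1)/2)\) law.

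Third, I identify the law of \((U_1^2,\ldots,U_p^2)\). I write \(U=G/\|G\|\) with \(G\) standard Gaussian in \(\R^p\). Then \(U_j^2=G_j^2/\sum_i G_i^2\), and the \(G_j^2\) are independent \(\chi^2_1\), that is, \(\mathrm{Gamma}(1/2)\) variables up to scale. The standard gamma-to-Dirichlet normalization gives \((U_1^2,\ldots,U_p^2)\sim\Dir(1/2,\ldots,1/2)\). Hence, for any \(S\) with this law, \((S_1,\ldots,S_p)\overset{d}{=}(U_1^2,\ldots,U_p^2)\). Because the map \(s\mapsto(\sum_j w_j^2/s_j)^{-1}\) is measurable, the equality in law transfers to the statement of the corollary.

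The only point requiring care is this distributional identification of the squared coordinates with the Dirichlet law; it is classical. I do not expect any real obstacle.
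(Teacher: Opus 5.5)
Your proposal is correct and is essentially the paper's own argument. You specialize \cref{thm:universal-beta} to the coordinate arrangement \(a_j=e_j\), use orthogonality to get \(\|g_{a,w}(U)\|^2=\sum_j w_j^2/U_j^2\), and identify \((U_1^2,\ldots,U_p^2)\) with \(\Dir(1/2,\ldots,1/2)\); your remarks on zero weights and the Gaussian normalization simply make explicit what the paper leaves implicit.
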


For active coordinates, the variable on the left is the weighted harmonic
mean of \(S_j/w_j\), with weights \(w_j\).  Thus every deterministic choice
of the averaging weights produces the law of a single coordinate energy,
even though all coordinates enter the harmonic mean.  This identity is the
orthogonal-coordinate specialization of the spherical theorem; it does not
by itself explain invariance under a nonorthogonal arrangement.

The same formula has an information-geometric form.  For probability vectors
\(w\) and \(s\) with positive coordinates, define the Pearson divergence
\[
  D_{\chi^2}(w\Vert s)
  :=\sum_{j=1}^p\frac{(w_j-s_j)^2}{s_j}
  =\sum_{j=1}^p\frac{w_j^2}{s_j}-1.
\]

\begin{corollary}[Pearson-divergence form]
\label{cor:pearson-divergence}
If
\(S\sim\Dir(1/2,\ldots,1/2)\), then, for every simplex vector \(w\),
\[
  D_{\chi^2}(w\Vert S)
  \sim
  \BetaPrime\!\left(\frac{p-1}{2},\frac12\right)
  \overset d=
  \frac{\chi_{p-1}^2}{\chi_1^2}
  =(p-1)F_{p-1,1}.
\]
Here the two chi-square variables are independent.
\end{corollary}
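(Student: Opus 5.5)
The plan is to deduce this from \cref{cor:dirichlet-reciprocal} together with the Beta-to-BetaPrime transformation already used in the proof of \cref{thm:tangent-projection-equivalence}. Set \(H:=(\sum_j w_j^2/S_j)^{-1}\). Corollary \cref{cor:dirichlet-reciprocal} gives \(H\sim\Beta(1/2,(p-1)/2)\). By the definition of Pearson divergence, almost surely (all \(S_j>0\))
\[
  D_{\chi^2}(w\Vert S)=\sum_{j=1}^p\frac{w_j^2}{S_j}-1=\frac1H-1=\frac{1-H}{H}.
\]
Coordinates with \(w_j=0\) contribute nothing to either side, so inactive indices are harmless. The coordinate reduction itself has already been checked: \(a_j=e_j\) makes \(\|g_{a,w}(U)\|^2=\sum_j w_j^2/U_j^2\). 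Moreover \((U_1^2,\ldots,U_p^2)\sim\Dir(1/2,\ldots,1/2)\) by Gaussian polar decomposition, because \(U_j^2=G_j^2/\sum_i G_i^2\) with \(G_j^2\sim\chi^2_1\) i.i.d.

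Next comes the distributional identification. If \(H\sim\Beta(\alpha,\beta)\), then \((1-H)/H\sim\BetaPrime(\beta,\alpha)\). Taking \(\alpha=1/2\) and \(\beta=(p-1)/2\) yields \(\BetaPrime((p-1)/2,1/2)\). This is exactly the law of \(R^2=(1-H_T)/H_T\) recorded in the proof of \cref{thm:tangent-projection-equivalence}. To realize it as a ratio of chi-squares, I would use the Beta--Gamma representation. Take independent \(X\sim\chi^2_1\) and \(Y\sim\chi^2_{p-1}\), that is, \(X\sim\mathrm{Gamma}(1/2,2)\) and \(Y\sim\mathrm{Gamma}((p-1)/2,2)\). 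Then \(X/(X+Y)\sim\Beta(1/2,(p-1)/2)\), and \((1-H)/H\) corresponds to \(Y/X=\chi^2_{p-1}/\chi^2_1\). Finally,
\[
  \frac{\chi^2_{p-1}}{\chi^2_1}=(p-1)\cdot\frac{\chi^2_{p-1}/(p-1)}{\chi^2_1/1}=(p-1)F_{p-1,1}
\]
by the definition of Snedecor's \(F\).

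I do not expect a genuine obstacle. The only care needed is bookkeeping: the BetaPrime parameters must appear in the order \(((p-1)/2,1/2)\), with the degrees of freedom \(p-1\) in the numerator. An alternative check is the Mellin transform \cref{eq:general-R-mellin} for \(R^2\) at exponent \(s/2\). It matches \(\E(Y/X)^{s/2}=\E Y^{s/2}\E X^{-s/2}\), computed from Gamma moments, on \(-(p-1)<s<1\), and that independently confirms the parameter order. The case \(p=2\) requires nothing extra, since it gives \(\BetaPrime(1/2,1/2)=F_{1,1}\).
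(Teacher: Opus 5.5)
Your proposal is correct and takes the same route as the paper: apply \cref{cor:dirichlet-reciprocal} to get \(\{1+D_{\chi^2}(w\Vert S)\}^{-1}\sim\Beta(1/2,(p-1)/2)\), then use the standard Beta-to-Beta-prime and Beta--Gamma transformations. The paper only cites these transformations as standard; you write them out with the correct parameter order, and your Mellin cross-check against \cref{eq:general-R-mellin} is a valid confirmation.
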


\begin{proof}
By \cref{cor:dirichlet-reciprocal},
\(\{1+D_{\chi^2}(w\Vert S)\}^{-1}\) has the stated Beta law.  The displayed
Beta-prime and chi-square forms are the standard transformations of that
variable.
\end{proof}

\subsection{The projective polar map and the Cremona special case}

For strictly positive weights in the coordinate arrangement, the projective
direction of the logarithmic gradient is
\begin{equation}
  \mathcal C_w[x_1:\cdots:x_p]
  = [w_1/x_1:\cdots:w_p/x_p].
  \label{eq:weighted-cremona}
\end{equation}
This is a diagonally weighted version of the standard Cremona transformation:
it is birational and \(\mathcal C_w\circ\mathcal C_w\) is the identity
wherever both sides are defined.  The unit vector \(N_{e,w}(u)\) is the spherical representative
of \(\mathcal C_w[u]\), so \cref{thm:universal-beta} becomes an incidence law
between a Haar ray and its weighted Cremona image.  Polar maps of homogeneous
functions and products of linear forms provide the surrounding algebraic
geometry \cite{dolgachev-2000,cohen-denham-falk-varchenko-2012}.

The terminology has a limited but legitimate scope.  If \(k=p\) and the
arrangement matrix \(A=[a_1,\ldots,a_p]\) is invertible, then projectively
\[
  [x]
  \longmapsto
  [A^\top x]
  \longmapsto
  [w/(A^\top x)]
  \longmapsto
  [A\{w/(A^\top x)\}]
  =[g_{a,w}(x)]
\]
is a Cremona map composed with projective linear changes of coordinates.  For
dependent or overcomplete arrangements, the logarithmic-gradient map remains
a rational map after denominators are cleared but need not be a Cremona
transformation.  In neither case does this algebraic description imply
preservation of spherical Haar or
projective volume; it identifies the map, not the probability mechanism.

\subsection{Conormal, tangent-plane, and log-barrier interpretations}

The tangent-plane calculation in \cref{sec:incidence-object} says that the
reciprocal field is an Euler-normalized centro-affine conormal, up to the
conventional choice of transversal sign, and that the incidence variable is
the squared origin-to-tangent-plane distance.  Arrangement master functions
provide the logarithmic-gradient background
\cite{cohen-denham-falk-varchenko-2012}; for the centro-affine terminology,
see \cite{nomizu-pinkall-1987}.

Positivity has an additional convex interpretation.  On a chamber choose
signs \(\varepsilon_j\) so that
\(\varepsilon_j a_j^\top x>0\), and define the logarithmic barrier
\[
  \mathcal B(x)
  :=-\sum_{j=1}^k w_j
       \log(\varepsilon_j a_j^\top x).
\]
Then
\[
  g_{a,w}(x)=-\nabla\mathcal B(x),
  \qquad
  \nabla^2\mathcal B(x)
  =\sum_{j=1}^k w_j
    \frac{a_ja_j^\top}{(a_j^\top x)^2}\succeq0.
\]
Thus the same vector is a conormal to a master-function level set and the
negative gradient of a convex chamber barrier.  This explains geometrically
why positive weights form the natural class, although it does not by itself
prove the Beta law.

None of these descriptions asserts that \(u\mapsto N(u)\) preserves Haar
measure, affine volume, or projective volume.  The theorem identifies one
scalar pushforward only.  The swirl example in
\cref{ex:swirl-counterexample} makes this limitation concrete.

\section{Discussion and open problems}
\label{sec:discussion}

The results distinguish three levels of information.  The
unconditional Beta marginal determines exactly the tangent-norm law but
does not determine a gradient.  Plane-conditional Cauchy laws recover the
combined positive coefficients of every fixed finite reciprocal
arrangement, and recover magnitudes but not measurable signs in the zonal
class.  Adding an oriented Hardy condition removes that sign ambiguity.
For signed reciprocal measures, the conditional laws make every phase
pushforward positive and expose an exact cancellation deficit equal to twice
the negative mass.  A phase-norming condition then gives local-to-global
positivity without requiring injectivity.  Only opposite-sign collisions
matter: a cross-sign product-dimension criterion is sufficient, and negative
atoms are ruled out with no separation assumption.  The classical signed
boundary-tail input is what turns each one-dimensional Cauchy projection into
positivity before projective tomography addresses the remaining diffuse
cancellation.

On the infinite side, \cref{thm:wiener-incidence} reduces absolute
integrability exactly to a projection Dini series.
\cref{thm:entropy-threshold,thm:clustered-integrability} show why neither
weights nor geometry alone can classify all measures.  The critical
endpoint is already nontrivial: zero \(\mathcal H^1\)-length is sufficient,
while a rectifiable component is obstructive.

The remaining problems are consequently narrower than the initial
classification questions.

\begin{enumerate}
\item Find intrinsic energy or capacity conditions equivalent to the
  almost-everywhere Wiener series in
  \cref{eq:wiener-profile-series}.  At dimension one, determine the boundary
  between the zero-length result in \cref{cor:subcritical-support} and the
  rectifiable obstruction in \cref{thm:rectifiable-obstruction}, especially
  for purely unrectifiable carriers of positive finite
  \(\mathcal H^1\)-measure.
\item Decide whether reciprocal integrability and plane-conditional Cauchy
  laws force the phase deficit in \cref{eq:phase-norm-deficit} to vanish.
  Equivalently, can a nonzero diffuse negative part remain dominated by the
  positive part in almost every phase pushforward?  More generally, under an
  oriented analytic hypothesis, determine when a compatible family of inner
  Cayley boundary maps must arise from one positive reciprocal measure on
  projective space.  The sign family in \cref{eq:zonal-sign-family} shows that
  no unoriented version can hold.
\item Characterize the singular circle measures satisfying the ordinary
  secant Dini condition in \cref{eq:secant-integrability}.  Clark theory
  alone controls nontangential boundary values and does not supply this
  absolute-integrability property.
\end{enumerate}

\begin{acks}[AI-use disclosure]
During the development of this manuscript, OpenAI GPT-5.6 Sol, operating
through Codex, assisted with literature searching, mathematical exploration,
and writing.
Anthropic Claude Opus 5 was used for independent proof audits and critical
manuscript review.  The author made all substantive decisions and accepts
full responsibility for the mathematical claims, citations, and text.
\end{acks}

\bibliographystyle{\BetaIncidenceBibliographyStyle}
\bibliography{beta-incidence-angle}
 
\end{document}